\newcommand{\fancybreak}[1]{\par
  \penalty -100
  \noindent\parbox{\linewidth}{\centering #1}%%\null
  \par
  \@afterindentfalse
  \@afterheading}
\titleformat*\section{\Large\bfseries\boldmath\raggedright}
\titleformat*\subsection{\large\bfseries\boldmath\raggedright}
\titleformat*\paragraph{\bfseries}
\titlespacing*\paragraph{0em}{0.5em plus 0.5em}{1em}
\numberwithin{equation}{section}
\theoremstyle{plain}
\newtheorem{thm}{Theorem}[section]
\newtheorem*{thm*}{Theorem}
\newtheorem{prop}[thm]{Proposition}
\newtheorem{lem}[thm]{Lemma}
\newtheorem{cor}[thm]{Corollary}
\newtheorem{fact}[thm]{Fact}
\theoremstyle{definition}
\newtheorem{rem}[thm]{Remark}
\newcommand{\td}{\,\mathrm{d}}
\newcommand{\di}{\mathrm{d}}
\newcommand{\const}{\textup{const}}
\DeclareMathOperator{\Real}{Re}
\DeclareMathOperator{\Imaginary}{Im}
\newcommand{\id}{\textup{id}}
\newcommand{\GL}{\mathrm{GL}}
\newcommand{\SL}{\mathrm{SL}}
\newcommand{\so}{\mathfrak{so}}
\newcommand{\SU}{\mathrm{SU}}
\newcommand{\RR}{\mathbb{R}}
\newcommand{\CC}{\mathbb{C}}
\newcommand{\ZZ}{\mathbb{Z}}
\newcommand{\NN}{\mathbb{N}}
\newcommand{\HH}{\mathbb{H}}
\newcommand{\FF}{\mathbb{F}}
\newcommand{\Ind}{\textup{Ind}}
\newcommand{\calF}{\mathcal{F}}
\newcommand{\calO}{\mathcal{O}}
\newcommand{\calS}{\mathcal{S}}
\newcommand{\calB}{\mathcal{B}}
\newcommand{\calH}{\mathcal{H}}
\newcommand{\calU}{\mathcal{U}}
\newcommand{\calD}{\mathcal{D}}
\newcommand{\calL}{\mathcal{L}}
\newcommand{\frakg}{\mathfrak{g}}
\newcommand{\frakk}{\mathfrak{k}}
\newcommand{\frakp}{\mathfrak{p}}
\newcommand{\frakn}{\mathfrak{n}}
\newcommand{\fraka}{\mathfrak{a}}
\newcommand{\frakm}{\mathfrak{m}}
\newcommand{\frakh}{\mathfrak{h}}
\newcommand{\frakl}{\mathfrak{l}}
\newcommand{\frakq}{\mathfrak{q}}
\newcommand{\Residue}{\textup{res}}
\newcommand{\diag}{\operatorname{diag}}
\newcommand{\linspan}{\operatorname{span}}
\newcommand{\Spin}{\textup{Spin}}
\newcommand{\pr}{\textup{pr}}
\newcommand{\sub}{\textup{sub}}
\renewcommand{\1}{\mathbf{1}}
\DeclareMathOperator{\supp}{supp}
\DeclareMathOperator{\sign}{sign}
\newcommand\emptyarg{-}
\DeclarePairedDelimiterX\abs[1]\lvert\rvert{\ifblank{#1}{\emptyarg}{#1}}
\DeclarePairedDelimiterX\norm[1]\lVert\rVert{\ifblank{#1}{\emptyarg}{#1}}
\DeclarePairedDelimiterX\Set[2]\{\}{#1:#2}
\DeclarePairedDelimiterX\Hermit[2](){%
  \ifblank{#1}{\emptyarg}{#1}
  \:\delimsize\vert\:
  \ifblank{#2}{\emptyarg}{#2}}
\DeclarePairedDelimiterX\innerp[2]\langle\rangle{%
  \ifblank{#1}{\emptyarg}{#1}
  ,
  \ifblank{#2}{\emptyarg}{#2}}
\newcommand\minuszero{\setminus\{0\}}
\newcommand\restrictedto[1][]{#1\vert\sb} % not perfect 
\newcommand\qqtext{\qquad\text}
\newcommand\qtextq[1]{\quad\text{#1}\quad}
\title{Restriction of some unitary representations of $O(1,N)$ to symmetric subgroups}
\author{Jan M\"ollers, Yoshiki Oshima}
\date{}
\begin{document}

\maketitle

\begin{center}
{\footnotesize On the occasion of the centennial anniversary of Professor Kunihiko Kodaira's birthday.}
\end{center}
\vspace{1cm}

\begin{abstract}
  
  We find the complete branching law for the restriction of
  certain unitary representations of $O(1,n+1)$ to the 
  subgroups $O(1,m+1)\times O(n-m)$, $0\leq m\leq n$. The unitary
  representations we consider belong either to the unitary spherical
  principal series, the spherical complementary series or are unitarizable
  subquotients of the spherical principal series.\\
  In the crucial case $0<m<n$ the decomposition consists of a
  continuous part and a discrete part.
  The continuous part is given by a direct integral of unitary principal
  series representations whereas the discrete part consists of finitely
  many representations which either belong to the complementary series
  or are unitarizable subquotients of the principal series. The explicit
  Plancherel formula is computed on the Fourier transformed side of
  the non-compact realization of the representations by using the
  spectral decomposition of a certain hypergeometric type ordinary
  differential operator. The main tool
  connecting this differential operator with the representations
  are second order Bessel operators which describe the Lie algebra
  action in this realization.\\
  To derive the spectral decomposition of the ordinary differential
  operator we use Kodaira's formula for the spectral decomposition of Schr\"{o}dinger type operators.

  \medskip\noindent \emph{2010 MSC:} Primary 22E46; Secondary 33C05,
  34B24.

  \medskip\noindent \emph{Key words and phrases:} unitary
  representation, complementary series, principal series, relative
  discrete series, branching law, Bessel operators,
  hypergeometric function, Kodaira--Titchmarsh formula.
\end{abstract}

\newpage

\tableofcontents

%\newpage

\section*{Introduction}\label{sec:intro}
\addcontentsline{toc}{section}{Introduction}
% goes after or the pagenumer is wrong

Among his various different mathematical contributions, the spectral theory
 of self-adjoint differential operators is one of the earlier works of Professor
 Kunihiko Kodaira.
The so-called Weyl--Stone--Kodaira--Titchmarsh theory gives
 eigenfunction expansions for self-adjoint second order differential operators in one variable.
It provides a uniform treatment of classical eigenfunction expansions such
 as spectral decompositions into Bessel functions, Hermite polynomials or
 Laguerre functions.
 
Kodaira~\cite{KodSugaku1} considered a differential operator 
 $L=\frac{\di}{\di x}p(x)\frac{\di}{\di x}+q(x)$
 on a possibly unbounded interval $(a,b)\subset \RR$
 (see \cite{KodSugaku1,Kod49} for the precise setting).
Then $L$ extends to a self-adjoint operator on the space of square integrable functions on $(a,b)$
 with domain given by functions satisfying certain boundary conditions, and we have an expansion into
 eigenfunctions of the form:
\begin{align*}
u(x) = \sum_{j=1}^2\sum_{k=1}^2
 \int_{-\infty}^{\infty} s_j(x,\lambda) \int_a^b
 s_k(y,\lambda) u(y) \td y \td \rho_{jk}(\lambda).
\end{align*}
Here, the functions $s_1(\cdot, \lambda)$ and $s_2(\cdot, \lambda)$
 are linearly independent solutions to the equation $Lu=\lambda u$.
 
The existence of the density measure $\td\rho_{jk}$
 for which the above expansion holds was first proved by
 Weyl~\cite{Wey10}.
Later Stone gave in \cite[Theorem 10.22]{Sto32} a different proof using the general theory
 of operators on Hilbert spaces.
About forty years after Weyl's result,
 Kodaira~\cite{KodSugaku1, Kod49}
 found an explicit formula for $\td\rho_{jk}$ in terms of
 the characteristic functions, revealing the explicit relation 
 between the density measures and the asymptotic behaviour of eigenfunctions.
The same formula was also obtained independently by Titchmarsh~\cite{Tit46}
 using a different method, and it is called the Kodaira--Titchmarsh formula.
In the second half of \cite{Kod49} and in \cite{KodSugaku2} Kodaira
 studied the eigenfunctions and the density matrix in detail
 for some particular cases, which are important for applications.
Moreover, in a subsequent paper~\cite{Kod50} he generalized this formula
 to differential operators of any even order.
 
In his Gibbs lecture~\cite{Wey50} Weyl wrote about Kodaira:
`The formula~\cite[(12)]{Wey50} was rediscovered by Kunihiko Kodaira (who of course
 had been cut off from our Western mathematical literature since
 the end of 1941); his construction of $\rho$ and his proofs for 
 \cite[(12)]{Wey50}
 and the expansion formula \cite[(9)]{Wey50},
 still unpublished, seem to clinch the issue.
 It is remarkable that forty years had to pass before such a thoroughly
 satisfactory direct treatment emerged; the fact is a reflection
 on the degree to which mathematicians during this period got absorbed
 in abstract generalizations and lost sight of their task of finishing up
 some of the more concrete problems of undeniable importance.'
 
The Kodaira--Titchmarsh formula makes it possible to apply the spectral decomposition theorem
 to concrete settings and in particular it has a significant impact on 
 the harmonic analysis on Lie groups.
For a given variety $X$ and a Lie group $G$ acting on it, a fundamental problem
 in the harmonic analysis on $X$ is to expand arbitrary function on $X$ into
 joint eigenfunctions for the $G$-invariant differential operators on $X$.
An explicit description of such an expansion is called Plancherel Theorem.
When $X$ is a symmetric space of rank one, it amounts to
 eigenfunction expansions for the Laplacian.
In this case the problem can be reduced to an eigenfunction expansion
 for a self-adjoint second order differential operator in one variable 
 and hence the Kodaira--Titchmarsh formula can be applied directly.
In fact, a special case studied in the second half of \cite{Kod49}
 and in \cite{KodSugaku2} is enough to deduce the Plancherel Theorem
 for all symmetric spaces of rank one.
For Riemannian symmetric spaces $G/K$ of arbitrary rank the Plancherel Theorem was established
 by Harish-Chandra (see \cite{HC58}).
In this case the Plancherel measure is given by the $c$-function which can be
 explicitly written in terms of the Gamma function by the work of
 Gindikin--Karpelevi{\v{c}} \cite{GK62}.
As the $c$-function is defined in terms of the asymptotic behaviour of
 joint eigenfunctions, we see once more the spirit of the Kodaira--Titchmarsh formula
 in this setting.
For pseudo-Riemannian symmetric spaces $G/H$, special cases like
 hyperboloids $O(p,q)/O(p,q-1)$ were studied in the sixties
 by Shintani~\cite{Shi66} and Mol{\v{c}}anov~\cite{Mol66}.
The Plancherel Theorem for general
 semisimple symmetric spaces of arbitrary rank
 was established by the works of T. Oshima,
 Delorme~\cite{Del98}, and van den Ban--Schlichtkrull~\cite{BS05}.

\fancybreak{}

Plancherel Theorems for reductive homogeneous spaces $G/H$ can be viewed as
 induction problems, decomposing the induced representation
 $L^2(G/H)=\Ind_H^G(\1)$ into irreducible $G$-representations.
As well as induction problems we may consider restriction problems 
 as advocated in \cite{Kob05}, namely, we may ask how a representation
 decomposes when restricted to a subgroup.
The restriction problem was e.g.\ solved in \cite{KOP11,MS14} for
 the most degenerate principal series representations
 of $G=GL(n,\RR)$ and $G=GL(n,\CC)$ with respect to any symmetric pair $(G,H)$.
Since (degenerate) principal series representations
 are realized on $L^2$-sections of line bundles on a flag variety $G/P$,
 Mackey theory relates these restriction problems to
 the Plancherel type problems for the open $H$-orbits in $G/P$.
Our focus is on the indefinite orthogonal group
$O(1,n+1)$, $n\geq1$, for which we study the restriction of certain unitary
 representations using the Kodaira--Titchmarsh formula.

\fancybreak{}

We now introduce some notation in order to describe our results.
Let $G=O(1,n+1)$.
It is known that on the level of $(\frakg,K)$-modules
 all irreducible unitary representations of $G$ are obtained as
 subrepresentations of representations induced from a parabolic
 subgroup $P=MAN$.
Up to conjugation $P$ is unique and
 there are group isomorphisms
 $M\cong O(n)\times(\ZZ/2\ZZ)$,
$A\cong\RR_+$ and $N\cong\RR^n$. We restrict our attention to
representations induced from characters of $P$. Denote by
$\pi_{\sigma,\varepsilon}^{O(1,n+1)}$ the representation of $G$, which
is induced from the character of $P$ given by the character
$\sigma\in\CC$ of $A$ and the character $\varepsilon\in\ZZ/2\ZZ$ of
the second factor of $M\cong O(n)\times(\ZZ/2\ZZ)$ (normalized
parabolic induction).\\
In our parameterization
$\pi_{\sigma,\varepsilon}^{O(1,n+1)}$ is irreducible and unitarizable
if and only if $\sigma\in i\RR\cup(-n,n)$. By abuse of notation we
denote by $\pi_{\sigma,\varepsilon}^{O(1,n+1)}$ also the corresponding
irreducible unitary representations. For $\sigma\in i\RR$ these
representations are called
\textit{unitary principal series representations} and for
$\sigma\in(-n,0)\cup(0,n)$ they are called \textit{complementary series
representations}. We have natural isomorphisms
$\pi_{-\sigma,\varepsilon}^{O(1,n+1)}\cong\pi_{\sigma,\varepsilon}^{O(1,n+1)}$
for $\sigma\in i\RR\cup(-n,n)$.\\
Further, for $\sigma=n+2u$, $u\in\NN$, the representation
$\pi_{\sigma,\varepsilon}^{O(1,n+1)}$ has a unique non-trivial subrepresentation
$\pi_{\sigma,\varepsilon,\sub}^{O(1,n+1)}$. This subrepresentation is
irreducible and unitarizable and we use the same notation to also denote
the corresponding irreducible unitary representation. Its underlying $(\frakg,K)$-module is isomorphic to Zuckerman's module $A_\frakq(\lambda)$ for certain $\frakq$ and $\lambda$ and it occurs discretely in the Plancherel
formula for the hyperboloid $O(1,n+1)/O(1,n)$. We refer to these representations
as \textit{discrete series representations for the hyperboloid}.

\fancybreak{}

In this paper we study the restriction
of $\pi_{\sigma,\varepsilon}^{O(1,n+1)}$, $\sigma\in i\RR\cup(-n,n)$,
and $\pi_{\sigma,\varepsilon,\sub}^{O(1,n+1)}$, $\sigma\in n+2\NN$,
$\varepsilon\in\ZZ/2\ZZ$, with respect to any symmetric pair $(G,H)$. By
Berger's list \cite{Ber57} any non-trivial symmetric subgroup of $G$
is either conjugate to
\begin{align*}
  H &= O(1,m+1)\times O(n-m), && -1\leq m<n.
\end{align*}
Since $H$ is a maximal compact subgroup of $G$ if $m=-1$,
 the branching law for
 the restriction of $\pi_{\sigma,\varepsilon}^{O(1,n+1)}$ and
$\pi_{\sigma,\varepsilon,\sub}^{O(1,n+1)}$ to $O(1)\times O(n+1)$ is
simply the $K$-type decomposition \eqref{eq:KtypeDecomp} or
\eqref{eq:KtypeDecompSub} which is well-known. Moreover,
for $H=O(1,1)\times O(n)$, i.e.\ the case $m=0$, the branching law
can easily be derived using classical Fourier analysis, see
Section~\ref{sec:Branchingm=0}. The most interesting case is
 the branching to $H$ for $0<m<n$. In
the formulation of the branching law we use the conventions
$[0,\alpha)=\emptyset$ for $\alpha\leq0$ and $[0,\alpha]=\emptyset$ for $\alpha<0$.

\begin{thm*}[see Theorem~\ref{thm:RepDecomp}]
  The unitary representations $\pi_{\sigma,\varepsilon}^G$ and
  $\pi_{\sigma,\varepsilon,\sub}^G$ of $G=O(1,n+1)$ decompose into
  irreducible representations of $H=O(1,m+1)\times O(n-m)$, $0<m<n$,
  as follows: for $\sigma\in i\RR\cup(-n,n)$ and $\varepsilon\in\ZZ/2\ZZ$ we have
  \begin{multline*}
    \pi_{\sigma,\varepsilon}^G \restrictedto[\big] H 
    \cong 
    \sideset{}{^\oplus}\sum_{k=0}^\infty
    \Bigg(\int_{i\RR_+}^\oplus
    \pi_{\tau,\varepsilon+k}^{O(1,m+1)}\td\tau
    \\
    \oplus\bigoplus_{j\in\NN\cap
      \left[0,\frac{\abs{ \Real\sigma }-n+m-2k}{4}\right)}
    \pi_{\abs{ \Real\sigma }-n+m-2k-4j,\varepsilon+k}^{O(1,m+1)}
    \Bigg)\boxtimes\calH^k(\RR^{n-m}),
  \end{multline*}
  and for $\sigma=n+2u$, $u\in\NN$, and $\varepsilon\in\ZZ/2\ZZ$ we have
  \begin{multline*}
    \pi_{\sigma,\varepsilon,\sub}^G \restrictedto[\big] H 
    \cong 
    \sideset{}{^\oplus}\sum_{k=0}^\infty
    \Bigg(\int_{i\RR_+}^\oplus
    \pi_{\tau,\varepsilon+k}^{O(1,m+1)}\td\tau
    \oplus\bigoplus_{j\in\NN\cap
      \left[0,\frac{u-k}{2}\right]}
    \pi_{m+2u-2k-4j,\varepsilon+k,\sub}^{O(1,m+1)}
    \\
    \oplus\bigoplus_{j\in\NN\cap
      \left(\frac{u-k}{2},\frac{m+2u-2k}{4}\right)}
    \pi_{m+2u-2k-4j,\varepsilon+k}^{O(1,m+1)}
    \Bigg)\boxtimes\calH^k(\RR^{n-m}),
  \end{multline*}
  where $\calH^k(\RR^{n-m})$ denotes the irreducible representation of
  $O(n-m)$ on the space of solid spherical harmonics of degree $k$ on
  $\RR^{n-m}$.\\
\end{thm*}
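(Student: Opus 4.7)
The plan is to work in the non-compact Fourier transformed picture of $\pi_{\sigma,\varepsilon}^G$, where (as indicated in the abstract) the Lie algebra $\frakg$ acts by first and second order differential operators of Bessel type on $L^2(\RR^n)$. Writing $\RR^n = \RR^m \oplus \RR^{n-m}$ so that the symmetric subgroup $H = O(1,m+1) \times O(n-m)$ decomposes naturally, the factor $O(n-m)$ acts by rotations on the second summand, while the factor $O(1,m+1)$ acts by a combination of its own non-compact picture on $\RR^m$ and an extra correction coming from the radial variable $r = \abs{x''}$ in $\RR^{n-m}$. First, I would decompose $L^2(\RR^{n-m})$ into $O(n-m)$-isotypic components via solid spherical harmonics,
\[
  L^2(\RR^{n-m}) \cong \sideset{}{^\oplus}\sum_{k=0}^\infty L^2(\RR_+, r^{n-m-1}\td r) \boxtimes \calH^k(\RR^{n-m}),
\]
so that the whole representation space breaks up as an orthogonal sum of $O(n-m)$-isotypic pieces indexed by $k$. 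Since $O(1,m+1)$ commutes with $O(n-m)$, it acts on each piece, and the central element $-I \in O(n-m)$ contributes a sign $(-1)^k$ which explains the twist $\varepsilon + k$ on the $O(1,m+1)$ side.

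Next, I would focus on a fixed $k$-isotypic component and further decompose it as an $O(1,m+1)$-representation. For this, I would use the intertwining operators between the ambient non-compact picture and the principal series of $O(1,m+1)$: up to the radial $r$-direction, the $k$-isotypic piece looks like a family of $O(1,m+1)$-principal series $\pi_{\tau,\varepsilon+k}^{O(1,m+1)}$ parametrized by a spectral parameter $\tau$. The branching therefore reduces to a spectral problem for an ordinary differential operator in $r$, whose eigenfunctions package together the various principal series components. Using the explicit form of the Bessel operators realizing the $\frakg$-action, one obtains an ODE of hypergeometric type on $\RR_+$, with regular singularities at $0$ and $\infty$; its indicial exponents encode, on the one hand, the parameter $\sigma$ of the ambient representation and, on the other, the degree $k$ and the dimensions $m,n-m$.

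The core analytic step is then to determine the spectral decomposition of this ODE, viewed as an unbounded self-adjoint operator on an appropriately weighted $L^2(\RR_+)$. Here I would apply the Titchmarsh--Kodaira method: compute the Weyl--Titchmarsh $m$-function at one endpoint, analytically continue to identify the continuous spectrum (which will lie on a half-line in $i\RR_+$, producing the direct integral $\int_{i\RR_+}^\oplus \pi_{\tau,\varepsilon+k}^{O(1,m+1)}\td\tau$) and read off the poles giving the discrete spectrum. The candidate discrete eigenvalues will be zeros of the relevant Jacobi/hypergeometric quantity, placed at $\tau = \abs{\Real\sigma} - n + m - 2k - 4j$; one then checks by direct inspection of these values that they lie in the complementary series range $(0,m)$ exactly when $j \in \NN \cap [0, (\abs{\Real\sigma}-n+m-2k)/4)$, and in the subrepresentation range $m + 2\NN$ in the $\pi_{\sigma,\varepsilon,\sub}^G$ case, accounting for the split between $\sub$ and non-$\sub$ constituents shown in the two bracket regions.

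Finally, I would verify that the $L^2$-identifications are unitary isomorphisms by computing inner products of eigenfunctions against each other (orthogonality of discrete eigenfunctions, and the Plancherel formula for the continuous part), and then package the per-$k$ decompositions together into the Hilbert direct sum over $k$. The main obstacle I anticipate is the Titchmarsh--Kodaira analysis of the radial ODE: carefully handling the two singular endpoints, choosing the correct self-adjoint extension so that it genuinely represents the $H$-Casimir on the representation space, and matching the residues of the $m$-function with the normalized inner products of the complementary series and relative discrete series of $O(1,m+1)$. The bookkeeping connecting the $\sigma=n+2u$ case to the $\pi_{\abs{\cdot},\sub}^{O(1,m+1)}$ constituents is delicate, since exactly one of the two endpoint intervals in the discrete sum becomes a $\sub$-range and this happens precisely at the threshold $\tau = m$, i.e.\ when $m + 2u - 2k - 4j \geq m$, which corresponds to $j \leq (u-k)/2$.
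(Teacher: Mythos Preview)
Your proposal is correct and follows essentially the same route as the paper: the Fourier-transformed $L^2$-model, the $O(n-m)$-isotypic decomposition via spherical harmonics, the reduction via the Bessel operators to a hypergeometric ODE in the ratio variable $t=\abs{y}^2/\abs{x}^2$, and the Titchmarsh--Kodaira spectral analysis yielding the continuous and discrete parts exactly as you describe. One technical point the paper treats with some care and which you gloss over is the passage from $\frakh$-equivariance to $H$-equivariance of the resulting unitary map: since the candidate intertwiner is only a priori equivariant for $\overline{P}_H$ and the Lie algebra action of $\frakn_H$, the paper uses an analytic-vector argument (applied to suitably chosen dense subspaces) together with delicate integration-by-parts estimates on $K$-finite vectors to upgrade this to equivariance for the group element $N_H$ and hence for all of $H$.
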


The explicit Plancherel formula is given in
Theorem~\ref{thm:HIntertwiner}. First of all, the restriction
$\pi_{\sigma,\varepsilon}^G \restrictedto H$ resp.\ 
$\pi_{\sigma,\varepsilon,\sub}^G \restrictedto H$ is decomposed with
respect to the action of $O(n-m)$, the second factor of~$H$. Then the
decomposition of each $\calH^k(\RR^{n-m})$-isotypic component into
irreducible representations of $O(1,m+1)$ contains continuous and
discrete spectrum in general. The continuous spectrum is a direct integral
of unitary principal series
representations~$\pi_{\tau,\varepsilon+k}^{O(1,m+1)}$ of $O(1,m+1)$. The discrete
spectrum appears if and only if $k<\frac{\abs{ \Real\sigma }-n+m}{2}$ and is a
direct sum of finitely many complementary series representations in the case
$\pi_{\sigma,\varepsilon}^G \restrictedto H$ and additionally finitely
many discrete series representations for the hyperboloid in the case
$\pi_{\sigma,\varepsilon,\sub}^G \restrictedto H$. Therefore
the whole branching law of $\pi_{\sigma,\varepsilon}^G \restrictedto
H$ resp.\ $\pi_{\sigma,\varepsilon,\sub}^G \restrictedto H$
contains only finitely many discrete components and the discrete
spectrum is non-trivial if and only if $\abs{ \Real\sigma }>n-m$. In particular
for $m>0$ there is always at least one discrete component in the restriction
of the discrete series representations for the hyperboloid $\pi_{\sigma,\varepsilon,\sub}^G\restrictedto H$ and
also in the restriction of complementary series representations
$\pi_{\sigma,\varepsilon}^G \restrictedto H$ if $\sigma$ is sufficiently close to the
first reduction point $n$ or $-n$.

For $\sigma\in i\RR$ the decomposition is purely continuous. In this
case the branching law is actually equivalent to the Plancherel
formula for the Riemannian symmetric space $O(1,m+1)/(O(1)\times
O(m+1))$ (see Appendix~\ref{app:DecompPrincipalSeries}) and therefore
well-known. However, neither for the complementary series representations nor
the discrete series representations for the hyperboloid can the decomposition be
obtained in the same way.

The proof of the Plancherel formula we present works uniformly for all
$\sigma\in i\RR\cup(-n,n)\cup(n+2\NN)$, i.e.\ for both unitary principal series representations, complementary series, and discrete series representations for the hyperboloid. It uses the ``Fourier transformed
realization'' of $\pi_{\sigma,\varepsilon}^G$ resp.\ $\pi_{\sigma,\varepsilon,\sub}^G$ on
$L^2(\RR^n,\abs{ x }^{-\Real\sigma}\td x)$. For this consider first the
non-compact realization on the nilradical $\smash{\overline{N}}$ of
the parabolic subgroup $\overline{P}$ opposite to $P$. We then take
the Euclidean Fourier transform on $\overline{N}\cong\RR^n$ to obtain
a realization of $\pi_{\sigma,\varepsilon}^G$ resp.\ $\pi_{\sigma,\varepsilon,\sub}^G$ on
$L^2(\RR^n,\abs{ x }^{-\Real\sigma}\td x)$. The advantage of this
realization is that the invariant form is simply the $L^2$-inner
product. The Lie algebra action in the Fourier transformed picture is
given by differential operators up to order two, the crucial operators
being the second order Bessel operators studied in
\cite{HKM12}. We remark that similar differential operators for the minimal 
representation of $O(p,q)$ were previously studied in \cite{KM08} and 
are called fundamental differential operators there.
Using these operators in our case we reduce the branching law
to the spectral decomposition of an ordinary differential operator of
hypergeometric type on $L^2(\RR_+)$ (see
Section~\ref{sec:Reduction}). The spectral decomposition of this operator
is derived in Section~\ref{sec:SpectralDecomp} from Kodaira's result
on the Schr\"{o}dinger type operators and is used in
Section~\ref{sec:BranchingLawPlancherelFormula} to obtain the
branching law and the explicit Plancherel formula for the restriction
of the representations. An interesting
formula for the intertwining operators realizing the branching law in
the non-compact picture on $\overline{N}$ is computed in
Section~\ref{sec:IntertwinersNoncptPicture}. These intertwining
operators will be subject of a subsequent paper.

\fancybreak{}

Up to now only partial results regarding the branching of
$\pi_{\sigma,\varepsilon}^G$, $\sigma\in(0,n)$, and
$\pi_{\sigma,\varepsilon,\sub}^G$, $\sigma\in(n+2\NN)$, to $H$ were known.
Here are some related results: 
\begin{itemize}
\item For $n=2$ and $m=1$ the full decomposition of the complementary series
  was given by Mukunda \cite{Muk68} using the non-compact picture. This case corresponds to
  the branching law $\SL(2,\CC)\searrow\SL(2,\RR)$.
\item Extending his study on branching laws for 
  discretely decomposable restrictions~\cite{Kob93}, Kobayashi constructed
  discrete components for Zuckerman's modules of $O(p,q)$ when restricted
  to $O(p',q')\times O(p'',q'')$, which was announced in his talk \cite{Kob05a}.  The restriction of $\pi_{\sigma,\varepsilon,\sub}^{O(1,n+1)}$,
  $\sigma\in(n+2\NN)$ is a special case of his result.  
  By our Theorem it turns out that
  his construction gives all the discrete components of the restriction
  $\pi_{\sigma,\varepsilon,\sub}^{O(1,n+1)}|_{O(1,m+1)\times O(n-m)}$
  for $\sigma\in(n+2\NN)$.
\item After the announcement of Kobayashi's result \cite{Kob05a},
  Speh--Venkataramana \cite[Theorem 1]{SV11}
  proved the existence of the discrete component
  $\pi_{\sigma-1,0}^{O(1,n)}$ in $\pi_{\sigma,0}^{O(1,n+1)} \restrictedto {O(1,n)}$ for $n\geq2$, $m=n-1$ and 
  $\sigma\in(1,n)$ as well as the existence of the discrete component
  $\pi_{n-1,0,\sub}^{O(1,n)}$ in $\pi_{n,0,\sub}^{O(1,n+1)} \restrictedto {O(1,n)}$
  (special case $j=k=0$, $\sigma\in(1,n]$ in our Theorem). They also
  use the Fourier transformed picture for their proof. This is a
  special case of their more general result for complementary series
  representations of $G$ on differential forms, i.e.\ induced from
  more general (possibly non-scalar) $P$-representations.
\item The same special case was obtained by Zhang \cite[Theorem
  3.6]{Zha11}. He actually proved that for all rank one groups
  $G=\SU(1,n+1;\FF)$, $\FF=\RR,\CC,\HH$, resp.\ $G=F_{4(-20)}$ certain
  complementary series representations of $H=\SU(1,n;\FF)$ resp.\
  $H=\Spin(8,1)$ occur discretely in some spherical complementary
  series representations of $G$. His proof uses the compact picture
  and explicit estimates for the restriction of $K$-finite vectors.
\end{itemize}

\paragraph{Acknowledgements.} We thank Toshiyuki Kobayashi and Bent \O
rsted for helpful discussions. Most of this work was done during the
second author's visit to Aarhus University supported by the Department
of Mathematics.

\paragraph{Notation.} $\ZZ_+=\{1,2,3,\ldots\}$, $\NN=\ZZ_+\cup\{0\}$,
$\RR_+=\Set{ x\in\RR }{ x>0 }$.

\section{$L^2$-model of some representations of $O(1,n+1)$}\label{sec:reps}

In this section we recall the necessary geometry of the group
$G=O(1,n+1)$ and some of its representation theory. The $L^2$-models
discussed in Section~\ref{sec:FTpicture} were for the complementary series
previously constructed by Vershik--Graev~\cite{VG06} and are new for the
discrete series representations for the hyperboloid.

\subsection{Subgroups and decompositions}

Let $G=O(1,n+1)$, $n\geq1$, realized as the subgroup of $\GL(n+2,\RR)$
leaving the quadratic form
\begin{equation*}
  \RR^{n+2}\to\RR,\quad x=(x_1,\dots,x_{n+2})^t \mapsto
  x_1^2-(x_2^2+\cdots+x_{n+2}^2),
\end{equation*}
invariant. We fix the Cartan involution $\theta$ of $G$ given by
$\theta(g)=g^{-t}=(g^t)^{-1}$, $g\in G$, which corresponds to the
maximal compact subgroup $K:=G^\theta=O(1)\times O(n+1)$. On the Lie
algebra level the Lie algebra $\frakg$ of $G$ has the Cartan
decomposition $\frakg=\frakk\oplus\frakp$ into the $\pm1$ eigenspaces
$\frakk$ and $\frakp$ of $\theta$ where $\frakk$ is the Lie algebra of
$K$. Choose the maximal abelian subalgebra $\fraka:=\RR
H\subseteq\frakp$ spanned by the element
\begin{equation*}
  H := 2(E_{1,n+2}+E_{n+2,1}),
\end{equation*}
where $E_{ij}$ denotes the $(n+2)\times(n+2)$ matrix with $1$ in the
$(i, j)$-entry and $0$ elsewhere. The root system of the pair
$(\frakg,\fraka)$ consists only of the roots $\pm2\gamma$ where
$\gamma\in\fraka_\CC^*$ is defined by $\gamma(H):=1$. Put
\begin{alignat*}{2}
  \frakn &:= \frakg_{2\gamma}, &\qquad \overline{\frakn} &:=
  \frakg_{-2\gamma}=\theta\frakn
  \\
  \shortintertext{and let} N &:= \exp_G(\frakn), & \overline{N} &:=
  \exp_G(\overline{\frakn})=\theta N
\end{alignat*}
be the corresponding analytic subgroups of $G$. Since
$\dim(\frakn)=\dim(\overline{\frakn})=n$ the half sum of all positive
roots is given by $\rho=n\gamma$. We introduce the following
coordinates on $N$ and $\overline{N}$: For $1\leq j\leq n$ let
\begin{align*}
  N_j &:= E_{1,j+1}+E_{j+1,1}-E_{j+1,n+2}+E_{n+2,j+1},\\
  \overline{N}_j &:= E_{1,j+1}+E_{j+1,1}+E_{j+1,n+2}-E_{n+2,j+1}.
\end{align*}
For $x\in\RR^n$ let
\begin{equation*}
  n_x := \exp\Big(\sum_{j=1}^n{x_jN_j}\Big)\in N, 
  \qquad
  \overline{n}_x := \exp\Big(\sum_{j=1}^n{x_j\overline{N}_j}\Big)\in\overline{N}.
\end{equation*}
Further put $M:=Z_K(\fraka)$ and $A:=\exp(\fraka)$ and denote by
$\frakm$ the Lie algebra of $M$. We write $M=M^+\cup m_0M^+$ where
\begin{align*}
  M^+ &:= \Set{ \diag(1,k,1) }{ k\in O(n) }\cong O(n)\qqtext{and}
  \\
  m_0 &:= \diag(-1,1,\ldots,1,-1).
\end{align*}
Via conjugation the element $m_0$ acts on $N$ and $\overline{N}$ by
\begin{equation*}
  m_0n_xm_0^{-1} = n_{-x}  \qtextq{and}  m_0\overline{n}_xm_0^{-1}
  = \overline{n}_{-x}
\end{equation*}
and the action of $m\in M^+\cong O(n)$ on $N$ and $\overline{N}$ by
conjugation is given by
\begin{equation*}
  mn_xm^{-1} = n_{mx} \qtextq{and} m\overline{n}_xm^{-1} =
  \overline{n}_{mx}
\end{equation*}
for $x\in\RR^n$, where $mx$ is the usual action of $O(n)$ on
$\RR^n$. Further $A$ acts on $N$ and $\overline{N}$ by
\begin{equation*}
  e^{tH}n_xe^{-tH} = n_{e^{2t}x}  \qtextq{and} 
  e^{tH}\overline{n}_xe^{-tH} = \overline{n}_{e^{-2t}x}
\end{equation*}
for $x\in\RR^n$, $t\in\RR$. The following decomposition holds
\begin{equation*}
  \frakg = \overline{\frakn}\oplus\frakm\oplus\fraka\oplus\frakn 
  \qqtext{(Gelfand--Naimark decomposition)}.
\end{equation*}
The groups
\begin{equation*}
  P := MAN  \qtextq{and} \overline{P} := MA\overline{N}=\theta(P)
\end{equation*}
are opposite parabolic subgroups in $G$ and $\overline{N}P\subseteq G$
is an open dense subset. Let $W:=N_K(\fraka)/Z_K(\fraka)$ be the Weyl
group corresponding to $\fraka$. Then $W=\{\1,[w_0]\}$ where the
non-trivial element is represented by the matrix
\begin{equation*}
  w_0 = \diag(-1,1,\ldots,1)\in K.
\end{equation*}
The element $w_0$ has the property that $w_0Nw_0^{-1}=\overline{N}$
and hence $w_0Pw_0^{-1}=\overline{P}$. More precisely,
\begin{equation*}
  w_0n_xw_0^{-1} = \overline{n}_{-x}  \qtextq{and} 
  w_0e^{tH}w_0^{-1} = e^{-tH}.
\end{equation*}
We have the disjoint union
\begin{equation*}
  G = \overline{P}\cup\overline{P}w_0\overline{P} \qqtext{(Bruhat
    decomposition)}.
\end{equation*}
The following lemma is a straightforward calculation:

\begin{lem}\label{lem:w0nbarDecomposition}
  For $x\in\RR^n$, $x\neq0$, we have
  $w_0^{-1}\overline{n}_x=\overline{n}_yme^{tH}n_z\in\overline{N}P$
  with
  \begin{equation*}
    \begin{split}
      y &= -\abs{ x }^{-2}x,\\
      z &= \abs{ x }^{-2}x,\\
      t &= \log\abs{ x },
    \end{split}
    \qquad
     m = \left(
      \begin{array}{ccc}
        -1&&\\&\1_n-2\abs{ x }^{-2}xx^t&\\&&-1
      \end{array}
    \right).
  \end{equation*}
\end{lem}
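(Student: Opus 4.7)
The plan is to prove this by direct matrix calculation, using uniqueness of the Gelfand--Naimark decomposition to reduce the problem to exhibiting one valid factorization. Since $\overline{N}P = \overline{N}MAN$ is open and dense in $G$ and the multiplication map $\overline{N}\times M \times A \times N \to \overline{N}P$ is a diffeomorphism, it suffices to verify the proposed identity of matrices. To keep the arithmetic manageable, I would first observe that the nilpotent generators $X = \sum_j z_j N_j$ and $\overline{X} = \sum_j x_j \overline{N}_j$ satisfy $X^3 = \overline{X}^3 = 0$: a quick inspection shows that both $X$ and $\overline{X}$ have image in $\linspan(e_1, e_{n+2})$, while $X v_+ = 0$ and $\overline{X} v_- = 0$ where $v_\pm := e_1 \pm e_{n+2}$. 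Hence $n_z = \1 + X + \tfrac12 X^2$ and $\overline{n}_x = \1 + \overline{X} + \tfrac12 \overline{X}^2$ as finite polynomial expressions.

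Next I would exploit the $H$-weight decomposition of $\RR^{n+2}$: the isotropic vectors $v_\pm$ are $H$-eigenvectors with eigenvalues $\pm 2$, and each $e_{j+1}$ ($1\leq j\leq n$) is fixed by $H$. The immediate consequences are $\overline{n}_x v_- = v_-$ and $n_z v_+ = v_+$, so testing the identity on $v_+$ is particularly efficient. The left-hand side gives
\[
w_0 \overline{n}_x v_+ = -(1+\abs{x}^2)\, e_1 + 2\sum_j x_j\, e_{j+1} + (1-\abs{x}^2)\, e_{n+2},
\]
while on the right we have $n_z v_+ = v_+$, $e^{tH} v_+ = e^{2t} v_+$, and $m v_+ = -v_+$, so the right-hand side collapses to $-e^{2t}\,\overline{n}_y v_+$. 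Matching coefficients then forces $e^{2t} = \abs{x}^2$ and $y = -\abs{x}^{-2} x$, giving the stated values of $t$ and $y$.

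With $t$ and $y$ determined, I would test the identity on the remaining basis vectors $e_{j+1}$ to determine both $z$ and the middle $O(n)$-block of $m$. The $O(n)$-block must be the unique orthogonal transformation that converts the direction $x$ (entering through $\overline{X}$) into the direction of $z$ (entering through $X$) once the dilation $e^{tH}$ and the factor $\overline{n}_y$ are accounted for; this turns out to be the reflection $\1_n - 2\abs{x}^{-2} x x^t$, and simultaneously forces $z = \abs{x}^{-2} x$. The consistency check on $v_-$, which should return $w_0 v_- = -v_+$, then follows automatically from the values already fixed.

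The main obstacle is simply the careful bookkeeping of matrix entries---there is no conceptual subtlety, which is why the paper labels the lemma a straightforward calculation. Choosing the reference vectors $v_\pm$ first (where nilpotency collapses the products to scalars) and only afterwards testing on the middle basis vectors isolates the parameters one at a time and keeps the verification short.
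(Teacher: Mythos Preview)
Your proposal is correct and matches the paper's approach, which simply labels the lemma a ``straightforward calculation'' and omits any details. Your strategy of testing the identity on the weight vectors $v_\pm = e_1 \pm e_{n+2}$ first (where the nilpotent factors act trivially on one side) is an efficient way to organise that calculation and isolates $t$, $y$, $z$, and the middle block of $m$ in turn exactly as you describe.
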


Let $\tau$ be the involution of $G$ given by conjugation with the
matrix
\begin{equation*}
  \diag(\1_m,-\1_{n-m},1).
\end{equation*}
Then the symmetric subgroup $H:=G^\tau$ is isomorphic to
$O(1,m+1)\times O(n-m)$. The subgroup $H$ is generated by the
subgroups $N_H$, $\overline{N}_H$, $M_H$ and $A$, where (viewing
$\RR^m$ as the subspace $\RR^m\times\{0\}\subseteq\RR^n$)
\begin{equation*}
  N_H := \Set{ n_x }{ x\in\RR^m } \qtextq{and}  \overline{N}_H :=
  \Set{ \overline{n}_x }{ x\in\RR^m }
\end{equation*}
and $M_H:=M_H^+\cup m_0M_H^+$ with
\begin{equation*}
  M_H^+ := \Set{ \diag(1,k_1,k_2,1) }{ k_1\in O(m),k_2\in O(n-m)
  }\cong O(m)\times O(n-m).
\end{equation*}
Also denote by
\begin{equation*}
  P_H := M_HAN_H  \qtextq{and}  \overline{P}_H :=
  M_HA\overline{N}_H
\end{equation*}
the corresponding parabolic subgroups. We write $\frakh$ for the Lie
algebra of $H$.

\subsection{Principal series representations -- non-compact picture
  and standard intertwining operators}\label{sec:PrincipalSeriesReps}

We identify $\fraka_\CC^*$ with $\CC$ by $\lambda\mapsto\lambda(H)$,
i.e.\ $\sigma\in\CC$ corresponds to
$\sigma\gamma\in\fraka_\CC^*$. Under this identification $\rho$
corresponds to $n$. For $\sigma\in\CC$ let $e^\sigma$ be the character
of $A$ given by $e^\sigma(e^{tH})=e^{\sigma t}$, $t\in\RR$. Further,
for $\varepsilon\in\ZZ/2\ZZ$ denote by $\xi_\varepsilon$ the character
of $M=M^+\cup m_0 M^+$ with $\xi_\varepsilon(m_0)=(-1)^\varepsilon$
and $\xi_\varepsilon(m)=1$ for $m\in M^+$. For $\sigma\in\CC$ and
$\varepsilon\in\ZZ/2\ZZ$ we consider the character
$\chi_{\sigma,\varepsilon}:=\xi_\varepsilon\otimes e^\sigma\otimes\1$
on $P=MAN$ and induce it to a representation of $G$:
\begin{align*}
  \widetilde{I}^G_{\sigma,\varepsilon} :={}&
  \Ind_P^G(\chi_{\sigma,\varepsilon})
  \\
  ={}& \Set{ f\in C^\infty(G) }{
    f(gman)=\xi_\varepsilon(m)^{-1}a^{-\sigma-\rho}f(g)\,\forall\,g\in
    G,man\in P=MAN }.
\end{align*}
The group $G$ acts on $\widetilde{I}^G_{\sigma,\varepsilon}$ by
left-translations and this action will be denoted by
$\widetilde{\pi}_{\sigma,\varepsilon}^G$. Restricting to $K$ it is easy to see that the $K$-type
decomposition of the representations $\widetilde{\pi}_{\sigma,\varepsilon}^G$ is
given by
\begin{equation}
  \widetilde{\pi}_{\sigma,\varepsilon}^G\restrictedto[\big] K \cong
  \sideset{}{^\oplus}\sum_{k=0}^\infty
  {\,\sign^{\varepsilon+k}\boxtimes\,\calH^k(\RR^{n+1})},\label{eq:KtypeDecomp}
\end{equation}
where $\sign$ denotes the non-trivial character of $O(1)$, and $O(n+1)$ acts as usual on the space $\calH^k(\RR^{n+1})$ of spherical harmonics of degree $k$ on $\RR^n$, giving combined the action of $K\cong O(1)\times O(n+1)$.

The following fact on the structure of 
 $\widetilde{\pi}_{\sigma,\varepsilon}^G$ is known (see \cite{JW77}).
\begin{fact}
\begin{enumerate}
\item The representation $(\widetilde{\pi}_{\sigma,\varepsilon}^G,\widetilde{I}_{\sigma,\varepsilon}^G)$ is irreducible if and only if $\sigma\notin\pm(n+2\NN)$. It is unitarizable if and only if $\sigma\in(-n,n)\cup i\RR$.
\item For $\sigma=n+2u$, $u\in\NN$, the representation $(\widetilde{\pi}_{\sigma,\varepsilon}^G,\widetilde{I}_{\sigma,\varepsilon}^G)$ has a unique non-trivial subrepresentation $(\widetilde{\pi}_{\sigma,\varepsilon,\sub}^G,\widetilde{I}_{\sigma,\varepsilon,\sub}^G)$. This subrepresentation is irreducible and unitarizable and its $K$-type decomposition is given by
\begin{equation}
  \widetilde{\pi}_{\sigma,\varepsilon,\sub}^G\restrictedto[\big] K \cong
  \sideset{}{^\oplus}\sum_{k=u+1}^\infty
  {\,\sign^{\varepsilon+k}\boxtimes\,\calH^k(\RR^{n+1})},\label{eq:KtypeDecompSub}
\end{equation}
\item For $\sigma=-n-2u$, $u\in\NN$, the representation $(\widetilde{\pi}_{\sigma,\varepsilon}^G,\widetilde{I}_{\sigma,\varepsilon}^G)$ has a unique non-trivial subrepresentation $(\widetilde{\pi}_{\sigma,\varepsilon,\sub}^G,\widetilde{I}_{\sigma,\varepsilon,\sub}^G)$. This subrepresentation is finite-dimensional and irreducible and the quotient $\widetilde{I}_{\sigma,\varepsilon}^G/\widetilde{I}_{\sigma,\varepsilon,\sub}^G$ is unitarizable and isomorphic to $\widetilde{I}_{-\sigma,\varepsilon,\sub}^G$.
\end{enumerate}
\end{fact}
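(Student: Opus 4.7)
The strategy is the classical one for spherical principal series of rank one semisimple groups, combining K-type analysis with the meromorphic family of Knapp--Stein intertwining operators (cf.\ \cite{JW77}). I would first work on the level of Harish-Chandra $(\frakg,K)$-modules. By \eqref{eq:KtypeDecomp} the module of $K$-finite vectors decomposes with multiplicity one as $\bigoplus_{k\geq 0} V_k$ where $V_k=\sign^{\varepsilon+k}\boxtimes\calH^k(\RR^{n+1})$. By multiplicity one, any $(\frakg,K)$-submodule equals the direct sum of the $V_k$ it contains; since $\frakk,\fraka,\frakm$ act diagonally in this decomposition and $\frakp_\CC$ sends $V_k$ into $V_{k-1}\oplus V_{k+1}$, such a submodule is determined by an initial or terminal segment of $\NN$, so reducibility is equivalent to the vanishing of a ``shift'' operator between adjacent $K$-types.

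Next I would pin down the shift operators. Choosing highest-weight vectors in each $V_k$, the raising and lowering parts of $\frakp_\CC$ reduce to scalar maps $r_k(\sigma)\colon V_k\to V_{k+1}$ and $\ell_k(\sigma)\colon V_{k+1}\to V_k$; an explicit calculation in the non-compact picture (using the action of a basis of $\frakp_\CC$ on spherical harmonics) gives, up to non-vanishing constants, factors of the form
\[
 \ell_k(\sigma)\;\propto\;\sigma-n-2k,
 \qquad
 r_k(\sigma)\;\propto\;\sigma+n+2k.
\]
Neither vanishes for $\sigma\notin\pm(n+2\NN)$, so $\widetilde{I}_{\sigma,\varepsilon}^G$ is irreducible, giving the first half of (i). For $\sigma=n+2u$, $u\in\NN$, only $\ell_u$ vanishes; the terminal segment $\bigoplus_{k\geq u+1}V_k$ is then closed under the $\frakg$-action (lowering from $V_{u+1}$ is blocked, raising preserves the set), is the unique non-trivial submodule, and has the K-type decomposition~\eqref{eq:KtypeDecompSub}, proving the submodule statement in~(ii). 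Symmetrically, at $\sigma=-n-2u$ only $r_u$ vanishes, and the initial segment $\bigoplus_{k\leq u}V_k$ is the unique non-trivial submodule; it is finite-dimensional, proving the first part of~(iii).

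For the unitarizability and the identification in~(iii) I would bring in the Knapp--Stein intertwining operator $A_\sigma\colon\widetilde{I}_{\sigma,\varepsilon}^G\to\widetilde{I}_{-\sigma,\varepsilon}^G$. By Schur's lemma it acts on each $V_k$ by a scalar $c_k(\sigma)$, and telescoping the shift operator formulas (equivalently, the Gindikin--Karpelevich formula in this rank-one case) expresses $c_k(\sigma)$ as an explicit product of Gamma factors whose zero and pole loci match those above. For $\sigma\in i\RR$ unitarity is automatic from normalized induction from a unitary character. For $\sigma\in(-n,n)$, the invariant Hermitian form $\innerp{f}{g}_\sigma=\innerp{f}{A_\sigma g}_{L^2(K)}$ is positive definite once one checks that the $c_k(\sigma)$ all have the same sign on this interval, which is transparent from the product formula; this gives the complementary series. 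At $\sigma=n+2u$, the intertwiner $A_{-\sigma}$ has kernel the finite-dimensional submodule of $\widetilde{I}_{-\sigma,\varepsilon}^G$ and image $\widetilde{I}_{\sigma,\varepsilon,\sub}^G$, so it induces a $G$-equivariant isomorphism $\widetilde{I}_{-\sigma,\varepsilon}^G/\widetilde{I}_{-\sigma,\varepsilon,\sub}^G\cong\widetilde{I}_{\sigma,\varepsilon,\sub}^G$, proving the isomorphism in~(iii); after dividing out the uniform zero of $A_\sigma$ at $\sigma=n+2u$ by a Gamma factor, the renormalized form is positive definite on $\widetilde{I}_{\sigma,\varepsilon,\sub}^G$ (by inspecting signs of the renormalized $c_k(\sigma)$ for $k\geq u+1$), giving the unitarizability in~(ii) and, via the isomorphism, in~(iii).

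The main obstacle is the explicit computation of the shift operator scalars $r_k(\sigma),\ell_k(\sigma)$, or equivalently the Knapp--Stein eigenvalues $c_k(\sigma)$, as linear (respectively Gamma-factor) expressions in $\sigma$; all subsequent statements about reducibility, the location of the submodules, and the sign patterns for unitarizability follow by purely combinatorial inspection of these factors. The remaining bookkeeping is to lift these $(\frakg,K)$-module statements to the smooth globalizations, which is standard for admissible representations of reductive Lie groups.
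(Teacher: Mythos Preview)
The paper does not prove this statement; it introduces it with ``The following fact on the structure of $\widetilde{\pi}_{\sigma,\varepsilon}^G$ is known (see \cite{JW77})'' and gives no argument of its own. Your sketch is the standard Johnson--Wallach approach and is essentially what one finds in that reference, so in that sense it matches what the paper implicitly relies on.

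Your outline is correct in substance. Two minor points you could tighten. First, the assertion that submodules are exactly initial or terminal segments of $\NN$ does not follow merely from ``$\frakp_\CC$ sends $V_k$ into $V_{k-1}\oplus V_{k+1}$''; you also need that each projected map $V_k\to V_{k\pm1}$ is either zero or surjective, which comes from the one-dimensionality of $\operatorname{Hom}_K(\frakp_\CC\otimes V_k,V_{k\pm1})$ (a branching computation for spherical harmonics). You implicitly use this when you reduce to scalar shift operators, but it is worth stating. Second, for the ``only if'' direction of unitarizability in (i) you should say explicitly that for $\sigma\notin\RR\cup i\RR$ no invariant Hermitian form exists (the Hermitian dual of $\widetilde{\pi}_{\sigma,\varepsilon}^G$ is $\widetilde{\pi}_{\overline{\sigma},\varepsilon}^G$), and that for real $\sigma$ with $\abs{\sigma}>n$ away from the reducibility points the scalars $c_k(\sigma)$ change sign between consecutive $K$-types, so the form is indefinite. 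Both are routine once the $c_k(\sigma)$ are in hand, consistent with your closing remark.
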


Since $\overline{N}P\subseteq G$ is dense, a function in
$\widetilde{I}_{\sigma,\varepsilon}^G$ is already uniquely determined
by its values on $\overline{N}$ and for
$f\in\widetilde{I}_{\sigma,\varepsilon}^G$ we put
\begin{equation*}
  f_{\overline{N}}(x) := f(\overline{n}_x),  \qqtext{$x\in\RR^n$}.
\end{equation*}
Let $I_{\sigma,\varepsilon}^G:=\Set{ f_{\overline{N}} }{
  f\in\widetilde{I}_{\sigma,\varepsilon}^G }$ and denote by
$\pi_{\sigma,\varepsilon}^G$ the corresponding induced action, i.e.
\begin{equation*}
  \pi_{\sigma,\varepsilon}^G(g)f_{\overline{N}} :=
  (\widetilde{\pi}_{\sigma,\varepsilon}^G(g)f)_{\overline{N}}, 
  \qqtext{$f\in\widetilde{I}_{\sigma,\varepsilon}^G$}.
\end{equation*}
Further, denote by $(\pi_{\sigma,\varepsilon,\sub}^G,I_{\sigma,\varepsilon,\sub}^G)$ the corresponding subrepresentations of $(\pi_{\sigma,\varepsilon}^G,I_{\sigma,\varepsilon}^G)$ for $\sigma\in\pm(n+2\NN)$.
Note that if $f\in\widetilde{I}_{\sigma,\varepsilon}^G$ is a $\frakk$-fixed vector, namely, in the $k=0$ term on the right hand side of \eqref{eq:KtypeDecomp}, then $f_{\overline{N}}$ is equal to $(1+|x|^2)^{-\frac{\sigma+n}{2}}$ up to a constant multiple.

In view of the Bruhat decomposition
$G=\overline{P}\cup\overline{P}w_0\overline{P}$ the action $\pi_{\sigma,\varepsilon}^G$
can be completely described by the action of $\overline{P}$ and $w_0$. Using
Lemma~\ref{lem:w0nbarDecomposition} we find
\begin{alignat*}{2}
  \pi_{\sigma,\varepsilon}^G(\overline{n}_a)f(x) &= f(x-a), &&\qquad
  \overline{n}_a\in\overline{N},
  \\
  \pi_{\sigma,\varepsilon}^G(m)f(x) &= f(m^{-1}x), &&\qquad m\in M^+\cong
  O(n),
  \\
  \pi_{\sigma,\varepsilon}^G(m_0)f(x) &= (-1)^\varepsilon f(-x),
  \\
  \pi_{\sigma,\varepsilon}^G(e^{tH})f(x) &= e^{(\sigma+n)t}f(e^{2t}x),
  &&\qquad e^{tH}\in A,
  \\
  \pi_{\sigma,\varepsilon}^G(w_0)f(x) &=
  (-1)^\varepsilon\abs{ x }^{-\sigma-n}f(-\abs{ x }^{-2}x).
\end{alignat*}
This also gives the following expressions for the differential action
$\di\pi_\sigma^G=\di\pi_{\sigma,\varepsilon}^G$ of the Lie algebra
$\frakg$, which is independent of $\varepsilon$:
\begin{alignat*}{2}
  \di\pi_\sigma^G(\overline{N}_j)f(x) &= -\frac{\partial f}{\partial
    x_j}(x), &&\qquad  j=1,\ldots,n,
  \\
  \di\pi_\sigma^G(T)f(x) &= -D_{Tx}f(x), &&\qquad T\in\frakm\cong\so(n),
  \\
  \di\pi_\sigma^G(H)f(x) &= \left(2E+\sigma+n\right)f(x),
  \\
  \di\pi_\sigma^G(N_j)f(x) &= -\abs{ x }^2\frac{\partial f}{\partial
    x_j}(x)+x_j\left(2E+\sigma+n\right)f(x), &&\qquad j=1,\ldots,n,
\end{alignat*}
where $D_a$ denotes the directional derivative in direction
$a\in\RR^n$ and $E=\smash{\sum_{j=1}^n{x_j\frac{\partial}{\partial x_j}}}$ is
the Euler operator on $\RR^n$. For the action of $\frakn$ we have used
the identity
$\di\pi_\sigma^G(N_a)=\pi_{\sigma,\varepsilon}^G(w_0)\di\pi_\sigma^G(\overline{N}_{-a})\pi_{\sigma,\varepsilon}^G(w_0^{-1})$.

Now consider the normalized Knapp--Stein
intertwining operators
$\widetilde{J}(\sigma,\varepsilon):\widetilde{I}_{\sigma,\varepsilon}^G
\to\widetilde{I}_{-\sigma,\varepsilon}^G$ 
which are for $\Real\sigma>0$ given by
\begin{equation*}
  \widetilde{J}(\sigma,\varepsilon)f(g) :=
  \frac{(-1)^\varepsilon}{\Gamma(\frac{\sigma}{2})}\int_{\overline{N}}{f(gw_0\overline{n})\td\overline{n}}, 
  \qqtext{$g\in G$,  $f\in\widetilde{I}_{\sigma,\varepsilon}^G$},
\end{equation*}
where $\di\overline{n}$ is the Haar measure on $\overline{N}$ given by
the push-forward of the Lebesgue measure on $\RR^n$ by the map
$\RR^n\to\overline{N},\,x\mapsto\overline{n}_x$, and extended analytically to all $\sigma\in\CC$. This intertwining
operator induces an intertwining operator
$J(\sigma,\varepsilon):I_{\sigma,\varepsilon}^G\to
I_{-\sigma,\varepsilon}^G$ by
$J(\sigma,\varepsilon)f_{\overline{N}}:=(\widetilde{J}(\sigma,\varepsilon)f)_{\overline{N}}$,
$f\in\widetilde{I}_{\sigma,\varepsilon}^G$. Using
Lemma~\ref{lem:w0nbarDecomposition} we obtain
\begin{align*}
  J(\sigma,\varepsilon)f_{\overline{N}}(x) &= \frac{(-1)^\varepsilon}{\Gamma(\frac{\sigma}{2})}\int_{\RR^n}{f(\overline{n}_xw_0\overline{n}_z)\td z}\\
  &=
  \frac{1}{\Gamma(\frac{\sigma}{2})}\int_{\RR^n}{\abs{ z }^{-\sigma-n}f_{\overline{N}}(x-\abs{ z }^{-2}z)\td
    z}.
\end{align*}
Consider the coordinate change $y:=x-\abs{ z }^{-2}z$. Its Jacobian
$\abs{ \det(\frac{\partial y}{\partial z}) }$ is homogeneous of degree
$-2n$, $O(n)$-invariant and has value $1$ for $z=e_1$. Hence it is
equal to $\abs{ z }^{-2n}$. This finally gives
\begin{equation}
  J(\sigma,\varepsilon)f(x) = \frac{1}{\Gamma(\frac{\sigma}{2})}\int_{\RR^n}{\abs{ x-y }^{\sigma-n}f(y)\td y} = \frac{1}{\Gamma(\frac{\sigma}{2})}(\abs{}^{\sigma-n}*f)(x),\label{eq:KnappSteinConvolution}
\end{equation}
so $J(\sigma,\varepsilon)$ is up to a constant given by convolution with the
distribution $\abs{}^{\sigma-n}$ and its residues. We define a $G$-invariant Hermitian form
$\Hermit{}{}_{\sigma,\varepsilon}$ on $I_{\sigma,\varepsilon}^G$ by
\begin{equation}
  \Hermit{f}{g}_{\sigma,\varepsilon} :=
  \Hermit{f}{J(\sigma,\varepsilon)g}_{L^2(\RR^n)} =
  \frac{1}{\Gamma(\frac{\sigma}{2})}\int_{\RR^n}{\int_{\RR^n}{\abs{ x-y }^{\sigma-n}f(x)\overline{g(y)}\td
      x}\td y}.\label{eq:InnerProductNonCptPicture}
\end{equation}

For $\sigma\in(-n,n)$ this form is in fact positive definite and in
this case the completion $\calH_{\sigma,\varepsilon}^G$ of
$I_{\sigma,\varepsilon}^G$ with respect to the inner product
$\Hermit{}{}_{\sigma,\varepsilon}$ gives an irreducible unitary
representation
$(\calH_{\sigma,\varepsilon}^G,\pi_{\sigma,\varepsilon}^G)$ of
$G$. The intertwining operator extends to an (up to scalar) unitary isomorphism
$J(\sigma,\varepsilon):\calH_{\sigma,\varepsilon}^G\to\calH_{-\sigma,\varepsilon}^G$.
These representations comprise the \textit{complementary series}.

For $\sigma=n+2u$, $u\in\NN$ the operator $J(-\sigma,\varepsilon)$ vanishes on the
finite-dimensional subrepresentation $I_{-\sigma,\varepsilon,\sub}$ and maps onto
the infinite-dimensional subrepresentation $I_{\sigma,\varepsilon,\sub}$. Therefore the Hermitian
form $\Hermit{}{}_{-\sigma,\varepsilon}$ vanishes on the finite-dimensional
subrepresentation $I_{-\sigma,\varepsilon,\sub}^G$ and induces a $G$-invariant
positive definite Hermitian form on the unitarizable quotient
$I_{-\sigma,\varepsilon}^G/I_{-\sigma,\varepsilon,\sub}^G$. Since this quotient is isomorphic
to the subrepresentation $I_{\sigma,\varepsilon,\sub}^G$ via the intertwining operator
$J(-\sigma,\varepsilon)$ we also obtain an irreducible
unitary representation $(\calH_{\sigma,\varepsilon,\sub}^G,\pi_{\sigma,\varepsilon,\sub}^G)$.
These representations are isomorphic to the unitarizations of certain
 Zuckerman's modules $A_\frakq(\lambda)$ of $G$
 and occur discretely in the Plancherel formula
for the hyperboloids $O(1,n+1)/O(1,n)$. We call them \textit{discrete series representations for the hyperboloid}.

Finally, for $\sigma\in i\RR$ the usual $L^2$-inner product on $\RR^n$ provides
unitarizations
$(\calH_{\sigma,\varepsilon}^G,\pi_{\sigma,\varepsilon}^G)$ on
$\calH_{\sigma,\varepsilon}^G=L^2(\RR^n)$ and these representations
form the \textit{unitary principal series}.

Note that for any
$\sigma\in i\RR\cup(-n,n)$ the intertwining operator
$J(\sigma,\varepsilon)$ extends to an isometry between the
irreducible unitary representations
$(\calH_{\sigma,\varepsilon}^G,\pi_{\sigma,\varepsilon}^G)$ and
$(\calH_{-\sigma,\varepsilon}^G,\pi_{-\sigma,\varepsilon}^G)$.

\subsection{The Fourier transformed picture}\label{sec:FTpicture}

Consider the Euclidean Fourier transform
$\calF_{\RR^n}:\calS'(\RR^n)\to\calS'(\RR^n)$ given by
\begin{equation}
  \calF_{\RR^n}u(x) =
  (2\pi)^{-\frac{n}{2}}\int_{\RR^n}{e^{-i\Hermit{x}{y}}u(y)\td
    y}.\label{eq:DefFourierTransform}
\end{equation}
For $\varepsilon\in\ZZ/2\ZZ$ and $\sigma\in i\RR\cup(-n,n)$ resp.\ $\sigma\in(n+2\NN)$ we define
a representation $\rho_{\sigma,\varepsilon}^G$ of $G$ on
$\calF_{\RR^n}^{-1}I_{\sigma,\varepsilon}^G$ resp.\ $\calF_{\RR^n}^{-1}I_{\sigma,\varepsilon,\sub}^G$ by
\begin{equation*}
  \pi_{\sigma,\varepsilon}^G(g)\circ\calF_{\RR^n} =
  \calF_{\RR^n}\circ\rho_{\sigma,\varepsilon}^G(g),  \qqtext{$g\in G$}.
\end{equation*}
resp.
\begin{equation*}
  \pi_{\sigma,\varepsilon,\sub}^G(g)\circ\calF_{\RR^n} =
  \calF_{\RR^n}\circ\rho_{\sigma,\varepsilon}^G(g),  \qqtext{$g\in G$}.
\end{equation*}
It is easy to calculate the group action of
$\overline{P}=MA\overline{N}$:
\begin{alignat}{2}
  \rho_{\sigma,\varepsilon}(\overline{n}_a)f(x) &= e^{i\Hermit{x}{a}}f(x), &\qquad
  \overline{n}_a&\in\overline{N},\label{eq:FPGroupAction1}
  \\
  \rho_{\sigma,\varepsilon}(m)f(x) &= f(m^{-1}x), & m&\in M^+\cong
  O(n),\label{eq:FPGroupAction2}
  \\
  \rho_{\sigma,\varepsilon}(m_0)f(x) &= (-1)^\varepsilon
  f(-x),\label{eq:FPGroupAction3}
  \\
  \rho_{\sigma,\varepsilon}(e^{tH})f(x) &= e^{(\sigma-n)t}f(e^{-2t}x),
  & t&\in\RR.\label{eq:FPGroupAction4}
\end{alignat}
The action of $w_0$ in the Fourier transformed picture is more
involved (see e.g.\ \cite[Proposition 2.3]{VG06}). Note that by these
formulas the restriction $\rho_{\sigma,\varepsilon}\restrictedto {\overline{P}}$
also acts on $C^\infty(\RR^m\minuszero )$. Using the classical
intertwining relations
\begin{align*}
  x_j\circ\calF_{\RR^n} &=
  \calF_{\RR^n}\circ(-i\tfrac{\partial}{\partial x_j}),
  \\
  \tfrac{\partial}{\partial x_j}\circ\calF_{\RR^n} &=
  \calF_{\RR^n}\circ(-ix_j)
\end{align*}
it is easy to compute the differential action $\di\rho_\sigma^G$ of
$\rho_{\sigma,\varepsilon}^G$:
\begin{alignat}{2}
  \di\rho_\sigma^G(\overline{N}_j)f(x) &= ix_jf(x), &\qquad
  j&=1,\ldots,n,\label{eq:FPAlgebraAction1}
  \\
  \di\rho_\sigma^G(T)f(x) &= -D_{Tx}f(x), &
  T&\in\frakm\cong\so(n),\label{eq:FPAlgebraAction2}
  \\
  \di\rho_\sigma^G(H)f(x) &=
  -\left(2E-\sigma+n\right)f(x),\label{eq:FPAlgebraAction3}
  \\
  \di\rho_\sigma^G(N_j)f(x) &= -i\calB_j^{n,\sigma}f(x), &
  j&=1,\ldots,n,\label{eq:FPAlgebraAction4}
\end{alignat}
where we abbreviate
\begin{equation*}
  \calB_j^{n,\sigma} := x_j\Delta
  -\left(2E-\sigma+n\right)\frac{\partial}{\partial x_j}.
\end{equation*}
The operators $\calB_j^{n,\sigma}$ are called \emph{Bessel
  operators} and were for $G=O(1,n+1)$ first studied in
\cite{HKM12}.  Before that similar operators
for minimal representations of $O(p,q)$ were studied in \cite{KM08} 
and are called fundamental differential operators there.
 Bessel operators are polynomial differential operators on $\RR^n$
and hence the action $\di\rho_\sigma^G$ defines a representation of
$\frakg$ on $C^\infty(\Omega)$ for every open subset
$\Omega\subseteq\RR^n$.

To describe the representation spaces
$\calH_{\sigma,\varepsilon}^G$ resp.\ 
$\calH_{\sigma,\varepsilon,\sub}^G$ in the Fourier
transformed picture we recall that the Fourier transform
$\calF_{\RR^n}$ intertwines convolution and multiplication
operators. Further, the Riesz distributions
$R_\lambda\in\calS'(\RR^n)$ given by
\begin{equation*}
  \innerp{ R_\lambda }{ \varphi } =
  \frac{2^{-\frac{\lambda}{2}}}{\Gamma(\frac{\lambda+n}{2})}\int_{\RR^n}{\varphi(x)\abs{ x }^\lambda\td
    x},  \qqtext{$\varphi\in\calS(\RR^n)$},
\end{equation*}
for $\Real\lambda>-n$ and extended analytically to $\lambda\in\CC$
satisfy the following classical functional equation (see
\cite[equation (2') in II.3.3]{GS64})
\begin{equation*}
  \calF_{\RR^n}R_\lambda = R_{-\lambda-n}.
\end{equation*}
With this observation as well as \eqref{eq:KnappSteinConvolution} and \eqref{eq:InnerProductNonCptPicture} we see that in the
Fourier transformed picture the $G$-invariant inner product is simply given by
the inner product of $L^2(\RR^n,\abs{ x }^{-\Real\sigma}\td x)$ and hence the map $\calF_{\RR^n}^{-1}:I_{\sigma,\varepsilon}^G\to L^2(\RR^n,\abs{ x }^{-\Real\sigma}\td x)$  extends to $\calF_{\RR^n}^{-1}:\calH_{\sigma,\varepsilon}^G\to L^2(\RR^n,\abs{ x }^{-\Real\sigma}\td x)$ for $\sigma\in i\RR\cup(-n,n)$.
Similarly, the map $\calF_{\RR^n}^{-1}:I_{\sigma,\varepsilon,\sub}^G\to L^2(\RR^n,\abs{ x }^{-\Real\sigma}\td x)$ extends to $\calF_{\RR^n}^{-1}:\calH_{\sigma,\varepsilon,\sub}^G\to L^2(\RR^n,\abs{ x }^{-\Real\sigma}\td x)$ for $\sigma\in(n+2\NN)$. We note that already the action of the parabolic subgroup $\overline{P}$ given by \eqref{eq:FPGroupAction1}--\eqref{eq:FPGroupAction4} extends to an irreducible unitary representation of $\overline{P}$ on $L^2(\RR^n,\abs{x}^{-\Real\sigma}\td x)$ by Mackey theory and therefore $\calF_{\RR^n}^{-1}\calH_{\sigma,\varepsilon}^G$ resp.\ $\calF_{\RR^n}^{-1}\calH_{\sigma,\varepsilon,\sub}^G$ is actually equal to $L^2(\RR^n,\abs{ x }^{-\Real\sigma}\td x)$. This yields the $L^2$-realizations
$$ (\rho_{\sigma,\varepsilon}^G,L^2(\RR^n,\abs{x}^{-\Real\sigma}\td x)) $$
for $\sigma\in i\RR\cup(-n,n)\cup(n+2\NN)$. The Fourier transform is a
unitary (up to scalar multiples) isomorphism
$\calF_{\RR^n}:L^2(\RR^n,\abs{ x }^{-\Real\sigma}\td
x)\to\calH_{\sigma,\varepsilon}^G$ resp.\ $\calF_{\RR^n}:L^2(\RR^n,\abs{ x }^{-\Real\sigma}\td
x)\to\calH_{\sigma,\varepsilon,\sub}^G$ intertwining the representations
$\rho_{\sigma,\varepsilon}^G$ and $\pi_{\sigma,\varepsilon}^G$ resp.\ $\pi_{\sigma,\varepsilon,\sub}^G$.
For $\sigma\in i\RR\cup(-n,n)$ the
standard intertwining operators $J(\sigma,\varepsilon)$ are in this
picture (up to scalar multiples) given by multiplication
\begin{equation*}
  L^2(\RR^n,\abs{ x }^{-\Real\sigma}\td x)\to L^2(\RR^n,\abs{ x }^{\Real\sigma}\td
  x),\qquad f(x)\mapsto\abs{ x }^{-\sigma}f(x).
\end{equation*}

The explicit $K$-type decompositions \eqref{eq:KtypeDecomp} and \eqref{eq:KtypeDecompSub} are difficult to see
in the Fourier transformed picture. However, one can still 
describe the spaces of $K$-finite vectors. 
For this consider the
renormalized $K$-Bessel function $\widetilde{K}_\alpha(z)$ from
Appendix~\ref{app:KBessel}. 
For $\sigma\in i\RR\cup(-n,n)\cup(n+2\NN)$ put
\begin{equation}
  \psi_\sigma^G(x) := \widetilde{K}_{-\frac{\sigma}{2}}(\abs{ x }), 
  \qqtext{$x\in\RR^n\minuszero$}. \label{eq:L2SphericalVector}
\end{equation}
By Appendix~\ref{app:FourierHankel} and the integral formula \eqref{eq:IntFormulaJBesselHypergeometric}, $\calF_{\RR^n}\psi_\sigma^G$ is equal to $(1+|x|^2)^{-\frac{\sigma+n}{2}}$ up to a constant multiple and hence $\psi_\sigma^G$ is a $\frakk$-fixed vector in $\calF_{\RR^n}^{-1}I_{\sigma,\varepsilon}^G$.
If $\sigma\in i\RR\cup(-n,n)$, it constitutes the minimal $\frakk$-type in $L^2(\RR^n,\abs{x}^{-\Real\sigma}\td x)$. Note that
 as $K$-representation the minimal $K$-type is for $\varepsilon\neq0$
not the trivial representation since $m_0\in K$ acts on it
by~$(-1)^\varepsilon$. To describe the underlying $(\frakg,K)$-module
we denote for $f\in C^\infty(\RR_+)$ and $k\in\NN$ by
$f\otimes\abs{ x }^{2k}$ the function $f(\abs{ x })\abs{ x }^{2k}$ and by
$f\otimes\abs{ x }^{2k}\CC[x]$ the space of all functions of
the form $f(\abs{ x })\abs{ x }^{2k}p(x)$ for some polynomial
$p\in\CC[x]$.

\begin{lem}\label{lem:gKmodule}
For $\sigma\in i\RR\cup(-n,n)\cup(n+2\NN)$ the underlying $(\frakg,K)$-module
 of $\calF_{\RR^n}^{-1}I_{\sigma,\varepsilon}^G$ is given by
  \begin{equation}
 (\calF_{\RR^n}^{-1}I_{\sigma,\varepsilon}^G)_K=
    \sum_{k=0}^\infty{\widetilde{K}_{-\frac{\sigma}{2}+k}
      \otimes\abs{ x }^{2k}\CC[x]} =
    \sum_{k=0,1}{\widetilde{K}_{-\frac{\sigma}{2}+k}
      \otimes\abs{ x }^{2k}\CC[x]}.\label{eq:gKmodulePs} 
  \end{equation}
If $\sigma\in i\RR\cup(-n,n)$ the underlying $(\frakg,K)$-module of the representation
  $(\rho_{\sigma,\varepsilon},L^2(\RR^n,\abs{ x }^{-\Real\sigma}\td x))$ is
  given by
  \begin{equation}
    L^2(\RR^n,\abs{ x }^{-\Real\sigma}\td x)_K =
    \sum_{k=0}^\infty{\widetilde{K}_{-\frac{\sigma}{2}+k}
      \otimes\abs{ x }^{2k}\CC[x]}. \label{eq:gKmodule} 
  \end{equation}
\end{lem}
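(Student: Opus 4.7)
The strategy is to identify both sides of \eqref{eq:gKmodulePs} with the $(\frakg,K)$-submodule of $\calF_{\RR^n}^{-1}I_{\sigma,\varepsilon}^G$ generated by the spherical vector $\psi_\sigma^G$. As noted before the lemma, $\psi_\sigma^G$ represents the minimal $K$-type. This $K$-type cyclically generates the full $(\frakg,K)$-module: when $\pi_{\sigma,\varepsilon}^G$ is irreducible this is immediate, while at $\sigma=n+2u$ the Fact above shows $I_{\sigma,\varepsilon}^G$ has a \emph{unique} non-trivial submodule $I_{\sigma,\varepsilon,\sub}^G$, which forces the extension by the finite-dimensional quotient to be non-split, so any vector with nonzero image in that quotient --- in particular $\psi_\sigma^G$ --- generates the full module. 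It therefore suffices to check that (a)~$\psi_\sigma^G\in V$, where $V:=\sum_{k=0}^\infty\widetilde{K}_{-\sigma/2+k}\otimes\abs{x}^{2k}\CC[x]$; (b)~$V$ is $\frakg$-stable; and (c)~$\psi_\sigma^G$ generates $V$ under $U(\frakg)$. Part (a) is the $k=0$, $p=1$ term.

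The core computation is (b), and the key input consists of two identities for the renormalized $K$-Bessel function from Appendix~\ref{app:KBessel}: the derivative formula $\widetilde{K}'_\alpha(z)=-z\widetilde{K}_{\alpha+1}(z)$ and the three-term recurrence $z^2\widetilde{K}_{\alpha+2}(z)=\widetilde{K}_\alpha(z)+2(\alpha+1)\widetilde{K}_{\alpha+1}(z)$. Via the chain rule these yield
\[
\partial_j\widetilde{K}_\alpha(\abs{x}) = -x_j\widetilde{K}_{\alpha+1}(\abs{x}),\qquad \Delta\widetilde{K}_\alpha(\abs{x}) = \widetilde{K}_\alpha(\abs{x})+(2\alpha+2-n)\widetilde{K}_{\alpha+1}(\abs{x}).
\]
With these in hand I would verify $\frakg$-stability case by case using \eqref{eq:FPAlgebraAction1}--\eqref{eq:FPAlgebraAction4}: $\overline{N}_j$ acts by multiplication with $ix_j$ (preserving each summand), $\frakm$ by rotations (preserving $\abs{x}$ and $\CC[x]$), $H$ by $-(2E-\sigma+n)$ (whose action on $\widetilde{K}_{-\sigma/2+k}(\abs{x})\abs{x}^{2k}p(x)$ produces only pieces at levels $k$ and $k+1$ via the derivative identity), and the Bessel operator $\calB_j^{n,\sigma}=x_j\Delta-(2E-\sigma+n)\partial_j$ similarly yields only terms of the form $\widetilde{K}_{-\sigma/2+k'}(\abs{x})\abs{x}^{2k'}q(x)$ with $k'\in\{k-1,k,k+1\}$, all in $V$. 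Part (c) then follows by iteration: $\di\rho_\sigma^G(\overline{N}_j)$ applied repeatedly to $\psi_\sigma^G$ fills out the $k=0$ summand $\widetilde{K}_{-\sigma/2}\otimes\CC[x]$, while each application of $\calB_j^{n,\sigma}$ shifts the $\widetilde{K}$-index up by one, inductively reaching every $k\geq 0$.

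The second equality in \eqref{eq:gKmodulePs} follows by induction from the recurrence rewritten as $\abs{x}^2\widetilde{K}_{\alpha+2}(\abs{x})=\widetilde{K}_\alpha(\abs{x})+2(\alpha+1)\widetilde{K}_{\alpha+1}(\abs{x})$, which expresses $\abs{x}^{2k}\widetilde{K}_{-\sigma/2+k}(\abs{x})$ for $k\geq 2$ as a polynomial combination of $\widetilde{K}_{-\sigma/2}$ and $\abs{x}^2\widetilde{K}_{-\sigma/2+1}$. For \eqref{eq:gKmodule}, since $(\rho_{\sigma,\varepsilon},L^2(\RR^n,\abs{x}^{-\Real\sigma}\td x))$ is an irreducible unitary representation for $\sigma\in i\RR\cup(-n,n)$, its space of $K$-finite vectors coincides with the underlying Harish-Chandra module $(\calF_{\RR^n}^{-1}I_{\sigma,\varepsilon}^G)_K$, and \eqref{eq:gKmodule} reduces to \eqref{eq:gKmodulePs}.

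The main obstacle will be the Leibniz-rule bookkeeping for the Bessel operator in step (b): expanding $\calB_j^{n,\sigma}(\widetilde{K}_{-\sigma/2+k}(\abs{x})\abs{x}^{2k}p(x))$ produces a long sum of cross-terms involving first and second derivatives of both factors, and one must collect them carefully using both the derivative identity and the three-term recurrence to see that the result reassembles into elements of $V$ with the right index shifts.
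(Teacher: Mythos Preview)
Your approach is correct but more laborious than the paper's. The paper exploits the Iwasawa-type decomposition $\frakg=\frakk+\fraka+\overline{\frakn}$ together with Poincar\'e--Birkhoff--Witt to write $\calU(\frakg)=\calU(\overline{\frakn})\calU(\fraka)\calU(\frakk)$; since $\psi_\sigma^G$ is $\frakk$-fixed, this immediately gives $(\calF_{\RR^n}^{-1}I_{\sigma,\varepsilon}^G)_K=\calU(\overline{\frakn})\calU(\fraka)\psi_\sigma^G$. Here $\calU(\overline{\frakn})$ acts by polynomial multiplication and $\calU(\fraka)=\CC[E]$, and the single computation
\[
E\bigl(\widetilde{K}_\alpha(\abs{x})\abs{x}^{2k}\bigr)=-\tfrac{1}{2}\widetilde{K}_{\alpha+1}(\abs{x})\abs{x}^{2k+2}+2k\,\widetilde{K}_\alpha(\abs{x})\abs{x}^{2k}
\]
gives both containment and generation at once --- the second-order Bessel operators $\calB_j^{n,\sigma}$ never enter. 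Your route of checking $\frakg$-stability directly is logically sound (and your argument for cyclicity of $\psi_\sigma^G$ at the reducible points $\sigma\in n+2\NN$ is in fact more careful than the paper, which simply asserts it), but the ``main obstacle'' you anticipate is exactly what the PBW trick sidesteps.

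Two minor corrections. First, your Bessel identities are off by constants in the paper's normalization: one has $\widetilde{K}'_\alpha(z)=-\tfrac{z}{2}\widetilde{K}_{\alpha+1}(z)$ and $z^2\widetilde{K}_{\alpha+2}(z)=4(\alpha+1)\widetilde{K}_{\alpha+1}(z)+4\widetilde{K}_\alpha(z)$. Second, in step~(c) your claim that $\calB_j^{n,\sigma}$ ``shifts the $\widetilde{K}$-index up by one'' is imprecise --- it produces terms at levels $k-1$, $k$, and $k+1$ --- so you would still need to verify that the level-$(k+1)$ coefficient is nonzero. The Euler-operator formula above does exactly this with coefficient $-\tfrac{1}{2}$, which is why using $\fraka$ rather than $\frakn$ is the cleaner choice for generation.
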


\begin{proof}
  Since $\frakg=\frakk+\fraka+\overline{\frakn}$ the universal
  enveloping algebra $\calU(\frakg)$ of $\frakg$ decomposes by the
  Poincar\'{e}--Birkhoff--Witt Theorem into
  $\calU(\frakg)=\calU(\overline{\frakn})\calU(\fraka)\calU(\frakk)$. The
  $(\frakg,K)$-module $(\calF_{\RR^n}^{-1}I_{\sigma,\varepsilon}^G)_K$ is
  generated by the $\frakk$-fixed vector $\psi_\sigma^G$ and hence
  \begin{equation*}
    (\calF_{\RR^n}^{-1}I_{\sigma,\varepsilon}^G)_K = \calU(\frakg)\psi_\sigma^G
    = \calU(\overline{\frakn})\calU(\fraka)\psi_\sigma^G.
  \end{equation*}
  By \eqref{eq:FPAlgebraAction1} and \eqref{eq:FPAlgebraAction3} we
  have $\calU(\overline{\frakn})=\CC[x]$ and
  $\calU(\fraka)=\CC[E]$. Using \eqref{eq:DerivativeKBessel} we
  further find that the Euler operator $E$ acts on functions of the
  form $\widetilde{K}_{\alpha}(\abs{ x })\abs{ x }^{2k}$, $\alpha\in\RR$,
  $k\in\NN$, by
  \begin{equation*}
    E\left(\widetilde{K}_\alpha(\abs{ x })\abs{ x }^{2k}\right) =
    -\tfrac{1}{2}\widetilde{K}_{\alpha+1}(\abs{ x })
    \abs{ x }^{2k+2}+2k\widetilde{K}_\alpha(\abs{ x })\abs{ x }^{2k}
  \end{equation*}
  Hence
  $$ \calU(\overline{\frakn})\calU(\fraka)\psi_\sigma^G =
    \calU(\overline{\frakn})\sum_{k=0}^\infty
    \CC\left(\widetilde{K}_{-\frac{\sigma}{2}+k}\otimes\abs{ x }^{2k}\right) =
    \sum_{k=0}^\infty{\widetilde{K}_{-\frac{\sigma}{2}+k}\otimes\abs{ x }^{2k}\CC[x]} $$
  which proves the first equality in \eqref{eq:gKmodulePs}. The second equality in \eqref{eq:gKmodulePs} follows immediately from \eqref{eq:RecRelKBessel}.
Since the $K$-finite vectors do not depend on the globalization, \eqref{eq:gKmodule} follows.
\end{proof}

Now let $\sigma=n+2u$, $u\in\NN$. For $f\in C^\infty(\RR_+)$ and $j,k\in\NN$ we denote by $f\otimes\abs{x}^{2k}\calH^j(\RR^n)$ the space of functions of the form $f(|x|)|x|^{2k}p(x)$ with $p\in\calH^j(\RR^n)$. Further, let $\CC[x]_{>j}$ be the space of all polynomials which are sums of homogeneous polynomials of degree $>j$. Then $f\otimes\abs{x}^{2k}\CC[x]_{>j}$ denotes the space of functions of the form $f(|x|)|x|^{2k}p(x)$ with $p\in\CC[x]_{>j}$.

\begin{lem}
Let $\sigma=n+2u$, $u\in\NN$. The lowest $K$-type $\sign^{\varepsilon+u+1}\boxtimes\,\calH^{u+1}(\RR^{n+1})$ in the representation $(\rho_{\sigma,\varepsilon},L^2(\RR^n,\abs{ x }^{-\Real\sigma}\td x))$ is given by
\begin{equation}
 \bigoplus_{k=0}^{u+1} \widetilde{K}_{-\frac{\sigma}{2}+k}\otimes\abs{x}^{2k}\calH^{u-k+1}(\RR^n).\label{eq:LowestKTypeSubreps}
\end{equation}
and for the underlying $(\frakg,K)$-module the following inclusion holds:
\begin{equation}
 L^2(\RR^n,\abs{ x }^{-\Real\sigma}\td x)_K \subseteq
    \sum_{k=0}^\infty{\widetilde{K}_{-\frac{\sigma}{2}+k}
      \otimes\abs{ x }^{2k}\CC[x]_{>u-k}}.\label{eq:gKmoduleSub} 
\end{equation}
\end{lem}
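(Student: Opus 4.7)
The plan is first to identify the lowest $K$-type $V_{u+1}:=\sign^{\varepsilon+u+1}\boxtimes\calH^{u+1}(\RR^{n+1})$ of the subrepresentation explicitly inside $L^2(\RR^n,\abs{x}^{-\Real\sigma}\td x)$, and then to deduce the inclusion \eqref{eq:gKmoduleSub} from the fact that $V_{u+1}$ generates the subrepresentation under $\calU(\frakg)$. By Lemma~\ref{lem:gKmodule}, every $K$-finite vector of the ambient principal series already lies in $\sum_{k\geq0}\widetilde{K}_{-\sigma/2+k}\otimes\abs{x}^{2k}\CC[x]$, so the task is reduced to singling out which combinations represent $V_{u+1}$ and its $\calU(\frakg)$-orbit.

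To verify that the proposed space \eqref{eq:LowestKTypeSubreps} is a copy of $V_{u+1}$, I would check three consistency conditions. First, each vector $\widetilde{K}_{-\sigma/2+k}(\abs{x})\abs{x}^{2k}h(x)$ with $h\in\calH^{u-k+1}(\RR^n)$ lies in $L^2(\RR^n,\abs{x}^{-\Real\sigma}\td x)$ by the exponential decay at infinity and the power-law behaviour at the origin of $\widetilde{K}_\alpha$ from Appendix~\ref{app:KBessel}. Second, each summand is $\calH^{u-k+1}(\RR^n)$-isotypic under $M^+\cong O(n)$, and a direct computation using \eqref{eq:FPGroupAction3} shows that $m_0$ acts on it by $(-1)^\varepsilon\cdot(-1)^{u-k+1}$; this matches the restriction of $V_{u+1}$ to $M$, since $m_0=(-1,\diag(1,\ldots,1,-1))\in O(1)\times O(n+1)$ acts on the $\calH^{u-k+1}(\RR^n)$-isotypic component of $\calH^{u+1}(\RR^{n+1})$ (which corresponds to polynomials of degree $u-k+1$ in $y_1,\ldots,y_n$ times degree $k$ in $y_{n+1}$) by the same sign. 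Third, the dimension count $\sum_{k=0}^{u+1}\dim\calH^{u-k+1}(\RR^n)=\dim\calH^{u+1}(\RR^{n+1})$ is precisely the branching $\calH^{u+1}(\RR^{n+1})\vert_{O(n)}=\bigoplus_{j=0}^{u+1}\calH^j(\RR^n)$.

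The main technical obstacle is showing that the proposed space is stable under the full $K$-action rather than only under $M$. The missing generators of $\frakk\cong\so(n+1)$ are $N_j-\overline{N}_j$, which by \eqref{eq:FPAlgebraAction1} and \eqref{eq:FPAlgebraAction4} act in the Fourier picture as the second-order operator $-i(\calB_j^{n,\sigma}+x_j)$. Applying this operator to $\widetilde{K}_{-\sigma/2+k}(\abs{x})\abs{x}^{2k}h(x)$ and using the derivative formula \eqref{eq:DerivativeKBessel} together with the three-term recurrence \eqref{eq:RecRelKBessel}, the result should collapse to a linear combination of $\widetilde{K}_{-\sigma/2+k\pm1}(\abs{x})\abs{x}^{2(k\pm1)}h'(x)$, where $h'$ is the projection of $x_jh$ onto $\calH^{u-k}(\RR^n)$ or $\calH^{u-k+2}(\RR^n)$; both terms lie in the proposed space with $k$ shifted appropriately. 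Once this closure is established, the proposed space is a $K$-invariant copy of $V_{u+1}$, and by the multiplicity-one property of $K$-types in \eqref{eq:KtypeDecompSub} it must coincide with the lowest $K$-type of the subrepresentation.

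Finally, \eqref{eq:gKmoduleSub} follows from the generation property. Since the subrepresentation is irreducible with lowest $K$-type $V_{u+1}$, PBW gives
\begin{equation*}
(\calF_{\RR^n}^{-1}I_{\sigma,\varepsilon,\sub}^G)_K=\calU(\frakg)V_{u+1}=\calU(\overline{\frakn})\calU(\fraka)V_{u+1},
\end{equation*}
because $V_{u+1}$ is $\frakk$-invariant. Repeating the Euler-operator computation from the proof of Lemma~\ref{lem:gKmodule}, the action of $\calU(\fraka)=\CC[E]$ on $\widetilde{K}_{-\sigma/2+k}(\abs{x})\abs{x}^{2k}h(x)$ with $h\in\calH^{u-k+1}(\RR^n)$ produces only terms $\widetilde{K}_{-\sigma/2+k'}(\abs{x})\abs{x}^{2k'}h(x)$ with $k'\geq k$, preserving the polynomial factor $h$; subsequent multiplication by $\calU(\overline{\frakn})=\CC[x]$ replaces $h$ by $p\cdot h$, so $\deg(p\cdot h)\geq\deg h=u-k+1\geq u-k'+1>u-k'$. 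Hence every $K$-finite vector in the subrepresentation lies in $\widetilde{K}_{-\sigma/2+k'}\otimes\abs{x}^{2k'}\CC[x]_{>u-k'}$ for some $k'$, which is the asserted inclusion.
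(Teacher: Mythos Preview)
Your proposal is correct and follows essentially the same route as the paper: verify that the candidate space is $K$-stable by checking the action of $\overline{N}_j-N_j=i(\calB_j^{n,\sigma}+x_j)$ via \eqref{eq:DerivativeKBessel} and \eqref{eq:RecRelKBessel}, identify it with the $K$-type $V_{u+1}$, and then obtain \eqref{eq:gKmoduleSub} from $\calU(\frakg)V_{u+1}=\calU(\overline{\frakn})\calU(\fraka)V_{u+1}$ exactly as in Lemma~\ref{lem:gKmodule}. The only difference is that the paper invokes an external reference for an explicit $K$-isomorphism, whereas you pin down the $K$-type by the $M$-type decomposition together with multiplicity one in \eqref{eq:KtypeDecomp}; both arguments are valid, and your integrability check, while harmless, is in fact redundant once the $K$-type has been identified.
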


\begin{proof}
By \eqref{eq:gKmodulePs}, we see that \eqref{eq:LowestKTypeSubreps} is contained in $(\calF^{-1}_{\RR^n}I_{\sigma,\varepsilon}^G)_K$. We now show that \eqref{eq:LowestKTypeSubreps} is a $K$-subrepresentation of $(\calF^{-1}_{\RR^n}I_{\sigma,\varepsilon}^G)_K$.  
The group $O(n)$ leaves $\calH^j(\RR^n)$ invariant for every $j\in\NN$ and hence $M$ leaves each summand in \eqref{eq:LowestKTypeSubreps} invariant. It remains to show that $\frakk\cap(\frakn+\overline{\frakn})=\linspan\{\overline{N}_j-N_j:j=1,\ldots,n\}$ leaves \eqref{eq:LowestKTypeSubreps} invariant. An easy calculation using \eqref{eq:DerivativeKBessel} and \eqref{eq:RecRelKBessel} shows that for $p\in\calH^{u-k+1}(\RR^n)$ we have
\begin{align*}
 & \di\rho_\sigma^G(\overline{N}_j-N_j)\left[\widetilde{K}_{-\frac{\sigma}{2}+k}(\abs{x})\abs{x}^{2k}p(x)\right]\\
 ={}& i(\calB_j^{n,\sigma}+x_j)\left[\widetilde{K}_{-\frac{\sigma}{2}+k}(\abs{x})\abs{x}^{2k}p(x)\right]\\
 ={}& 4ik\widetilde{K}_{-\frac{\sigma}{2}+k-1}(\abs{x})\abs{x}^{2k-2}p_j^+(x) + i(n+2u-k)\widetilde{K}_{-\frac{\sigma}{2}+k+1}(\abs{x})\abs{x}^{2k+2}p_j^-(x),
\end{align*}
where $x_jp=p_j^++\abs{x}^2p_j^-$ with $p_j^\pm\in\calH^{u-k+1\pm1}(\RR^n)$ given by
$$ p_j^+ = x_jp-\frac{\abs{x}^2}{2u-2k+n}\frac{\partial p}{\partial x_j}, \qquad p_j^- = \frac{1}{2u-2k+n}\frac{\partial p}{\partial x_j}. $$
%This shows that \eqref{eq:LowestKTypeSubreps} constitutes the minimal $K$-type. 
As in \cite[Lemma B.1.2]{Moe10}, we can construct an explicit isomorphism between \eqref{eq:LowestKTypeSubreps} and $\sign^{\varepsilon+u+1}\boxtimes\,\calH^{u+1}(\RR^{n+1})$ which respects $K$-actions. In view of \eqref{eq:KtypeDecomp} the $K$-type $\sign^{\varepsilon+u+1}\boxtimes\,\calH^{u+1}(\RR^{n+1})$ occurs only once in $\calF^{-1}_{\RR^n}I_{\sigma,\varepsilon}^G|_K$ and hence \eqref{eq:LowestKTypeSubreps} must coincide with the lowest $K$-type in $\calF^{-1}_{\RR^n}I_{\sigma,\varepsilon,\sub}^G$ and in $L^2(\RR^n,\abs{ x }^{-\Real\sigma}\td x)$. This shows the first part of the claim.
Now an argument similar to the proof of Lemma~\ref{lem:gKmodule} shows the inclusion \eqref{eq:gKmoduleSub}.
\end{proof}

\subsection{Branching rule for $m=0$}\label{sec:Branchingm=0}

Using the $L^2$-model $(\rho_{\sigma,\varepsilon}^G,L^2(\RR^n,\abs{x}^{-\Real\sigma}\td x))$ we can now easily derive the branching rule for the restriction of $\rho_{\sigma,\varepsilon}^G$ to $H=O(1,1)\times O(n)$ (the case $m=0$). Taking conjugation if necessary we may and do assume that $H=MA\cup w_0MA$.  Note that $MA=SO(1,1)\times O(n)$ and the action of $MA$ is given by \eqref{eq:FPGroupAction2}, \eqref{eq:FPGroupAction3} and \eqref{eq:FPGroupAction4}. Therefore the isometric isomorphism
$$ L^2(\RR^n,\abs{x}^{-\Real\sigma}\td x)\to L^2(\RR^n), \quad f(x)\mapsto |x|^{-\frac{\sigma}{2}}f(x) $$
is an intertwining operator ${\rho_{\sigma,\varepsilon}^G\restrictedto {MA}}\to{\rho_{0,\varepsilon}^G\restrictedto {MA}}$ and the
restrictions $\rho_{\sigma,\varepsilon}^G \restrictedto {MA}$ are pairwise equivalent. The decomposition of $\rho_{0,\varepsilon}^G \restrictedto {MA}$ can be done by the Mellin transform with respect to the variable $|x|$ giving
$$ L^2(\RR^n) = \sideset{}{^\oplus}\sum_{k=0}^\infty \Bigg(\int_{i\RR}^\oplus
    {\pi'}_{\tau,\varepsilon+k}^{SO(1,1)}\td\tau\Bigg)\boxtimes\calH^k(\RR^n), $$
where ${\pi'}_{i\lambda,\delta}^{SO(1,1)}$ ($\lambda\in\RR$, $\delta\in\ZZ/2\ZZ$) denotes the unitary character of $SO(1,1)=A\cup m_0A$ given by
$$ {\pi'}_{i\lambda,\delta}^{SO(1,1)}(e^{tH}) = e^{i\lambda t}, \qquad {\pi'}_{i\lambda,\delta}^{SO(1,1)}(m_0) = (-1)^\delta. $$
Let $\pi_{i\lambda,\delta}^{O(1,1)}$ ($\lambda\in\RR_+$, $\delta\in\ZZ/2\ZZ$) denote the two-dimensional irreducible unitary representation of $O(1,1)=A\cup m_0A\cup w_0A\cup m_0w_0A$ given by
\begin{align*}
 &\pi_{i\lambda,\delta}^{O(1,1)}(e^{tH})
 = \left(
      \begin{array}{cc}
       e^{i\lambda t}&0\\0&e^{-i\lambda t}
     \end{array}\right), \quad 
 \pi_{i\lambda,\delta}^{O(1,1)}(m_0)=(-1)^{\delta}, \quad
 \pi_{i\lambda,\delta}^{O(1,1)}(w_0) = \left(
      \begin{array}{cc}
       0&1\\1&0
     \end{array}
    \right). 
\end{align*}
Any irreducible unitary representation of $O(1,1)$ is either isomorphic to $\pi_{i\lambda,\delta}^{O(1,1)}$ or a character which factors through $O(1,1)/A\simeq\ZZ/2\ZZ\times \ZZ/2\ZZ$.
Therefore, the only possibility for the branching law to $H$ is  
$$ L^2(\RR^n) = \sideset{}{^\oplus}\sum_{k=0}^\infty \Bigg(\int_{i\RR_+}^\oplus
 \pi_{\tau,\varepsilon+k}^{O(1,1)}\td\tau\Bigg)\boxtimes\calH^k(\RR^n). $$

\section{Reduction to an ordinary differential operator}
\label{sec:Reduction}

This section deals with the reduction of the branching problem for
$\rho_{\sigma,\varepsilon}^G \restrictedto H$ to an ordinary differential equation
on $\RR_+$. For this we assume $0<m<n$ throughout the rest of this paper.

Consider the $L^2$-realization $L^2(\RR^n,\abs{ (x,y) }^{-\Real\sigma}\td
x\td y)$ of the representation $\rho_{\sigma,\varepsilon}^G$ where we
split variables $(x,y)\in\RR^m\times\RR^{n-m}$. We realize the unitary
representations $\smash{\rho_{\tau,\delta}^{O(1,m+1)}}$ of the first factor
$O(1,m+1)$ of $H=O(1,m+1)\times O(n-m)$ in the same way on
$L^2(\RR^m,\abs{ x }^{-\Real\tau}\td x)$. For the second factor $O(n-m)$
denote by $\calH^k(\RR^{n-m})$ its representation on solid spherical
harmonics on $\RR^{n-m}$ of degree $k\in\NN$ by left-translation.

\begin{prop}\label{prop:IntertwiningProperty}
  Let $\sigma\in i\RR\cup(-n,n)\cup(n+2\NN)$ and $\tau\in i\RR\cup(-m,m)\cup(m+2\NN)$. For
  every solution $F\in C^\infty(\RR_+)$ to the second order ordinary
  differential equation
  \begin{align*}
    \MoveEqLeft
    t(1+t)u''(t)+\left(\tfrac{-\sigma+2k+n-m+2}{2}t+\tfrac{2k+n-m}{2}\right)u'(t)
    \\
    &+\tfrac{1}{4}\left(\left(\tfrac{-\sigma+2k+n-m}{2}\right)^2
      -\left(\tfrac{\tau}{2}\right)^2\right)u(t) = 0
  \end{align*}
  which is regular at $t=0$ the map
  \begin{align*}
    \Psi: C^\infty(\RR^m\minuszero )\boxtimes\calH^k(\RR^{n-m}) &\to
    C^\infty(\RR^n\setminus\{x=0\}),
    \\
    \Psi(f\otimes\phi)(x,y) &:=
    \abs{ x }^{\frac{\sigma-\tau-2k-n+m}{2}}F(\tfrac{\abs{ y }^2}{\abs{ x }^2})f(x)\phi(y),
  \end{align*}
  is $\overline{P}_H$- and $\frakh$-equivariant. 
Here, $\overline{P}_H$- and $\frakh$-actions on
  $C^\infty(\RR^m\minuszero)$ are given by
  \eqref{eq:FPGroupAction1}--\eqref{eq:FPAlgebraAction4}
  and actions on $C^\infty(\RR^n\setminus\{x=0\})$
  are given by the restriction of 
  \eqref{eq:FPGroupAction1}--\eqref{eq:FPAlgebraAction4}
  for $\overline{P}$ and $\frakg$.
\end{prop}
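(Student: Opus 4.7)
The strategy is to decompose $\frakh = \overline{\frakn}_H \oplus \frakm_H \oplus \fraka \oplus \frakn_H$: once the group-level $\overline{P}_H$-equivariance is established, Lie-algebra equivariance on $\frakm_H + \fraka + \overline{\frakn}_H$ follows by differentiation, so only the $\frakn_H$-piece needs to be checked separately at the Lie-algebra level.

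The $\overline{P}_H$-equivariance follows from a direct inspection of \eqref{eq:FPGroupAction1}--\eqref{eq:FPGroupAction4}. Set $\alpha := (\sigma-\tau-2k-n+m)/2$ and $G(x,y) := \abs{x}^\alpha F(\abs{y}^2/\abs{x}^2)$, and observe that $G$ is homogeneous of degree $\alpha$ in $(x,y) \in \RR^n$ since $\abs{y}^2/\abs{x}^2$ is scale-invariant. For $\overline{n}_a$ with $a \in \RR^m$ both sides multiply by $e^{i\langle x,a\rangle}$; for $(k_1,k_2) \in M_H^+$ both sides transform equally by orthogonal invariance of $\abs{\cdot}$ on $\RR^m$ and $\RR^{n-m}$; the case of $m_0$ uses $\phi(-y) = (-1)^k \phi(y)$ and fixes the $O(1,m+1)$-character $\delta$ to equal $\varepsilon + k \pmod 2$; and for $e^{tH} \in A$, the defining identity $(\sigma-n) - 2\alpha - 2k = \tau-m$ ensures that the $G$-homogeneity factor $e^{-2\alpha t}$ and the $\phi$-homogeneity factor $e^{-2kt}$ combine with the exponent $e^{(\sigma-n)t}$ from \eqref{eq:FPGroupAction4} to give exactly the $e^{(\tau-m)t}$ coming from the $O(1,m+1)$-side.

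The core of the proof is the $\frakn_H$-equivariance, i.e.\ the identity
\[
\calB_j^{n,\sigma}[Gf\phi] = G \cdot (\calB_j^{m,\tau} f) \cdot \phi \qquad \text{for } j = 1,\ldots,m.
\]
I would expand $\calB_j^{n,\sigma} = x_j \Delta_n - (2E_n - \sigma + n)\partial_{x_j}$ via the Leibniz rule on the triple product, splitting $\Delta_n = \Delta_x + \Delta_y$ and the Euler operator $E_n = E_x + E_y$ according to the splitting $(x,y) \in \RR^m \times \RR^{n-m}$. Three observations make the bookkeeping manageable: (i) $\phi$ depends only on $y$ and hence commutes with all $x$-derivatives; (ii) harmonicity $\Delta_y \phi = 0$; (iii) $k$-homogeneity $E_y \phi = k\phi$. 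After using these and dividing through by $\phi$, one obtains an identity in $(x,y)$ that is linear in $f, \partial_{x_i}f, \Delta_x f$. The $\partial_{x_i}f$-terms for $i \neq j$ cancel pairwise because $G$ is radial in $x$ for fixed $y$ (so $x_i \partial_{x_j} G = x_j \partial_{x_i} G$); the coefficient of $\Delta_x f$ is manifestly $x_j G$; and the coefficient of $\partial_{x_j} f$ collapses to $-G(2E_x + m - \tau)$ via $[\partial_{x_j}, E_x] = \partial_{x_j}$ together with the algebraic identity $2(\alpha-1+k) + n - \sigma = m - \tau - 2$. What remains is a single term of the form $x_j \abs{x}^{\alpha-2} f \phi$ times an expression in $F, F', F''$ and $t = \abs{y}^2/\abs{x}^2$; substituting the chain-rule formulas $\partial_{x_j} G = x_j \abs{x}^{\alpha-2}(\alpha F - 2tF')$ and $\Delta_n G = \abs{x}^{\alpha-2}[\alpha(m+\alpha-2)F + 2(n-m - t(m+2\alpha-4))F' + 4t(t+1)F'']$, and invoking the identity $\alpha(\alpha+\tau) = \frac{1}{4}((-\sigma+2k+n-m)^2 - \tau^2)$ (immediate from $\alpha = -\frac{1}{2}((-\sigma+2k+n-m) + \tau)$), identifies this expression with $4$ times the left-hand side of the stated ODE. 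Regularity of $F$ at $t=0$ guarantees that $\Psi(f\otimes\phi)$ extends smoothly across $\{y=0\}$.

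The main obstacle is the bookkeeping in the last step: the Leibniz expansion produces roughly a dozen terms, and they must be organized so that the cancellations due to the specific value of $\alpha$, the radial structure of $G$ in $x$, and the harmonicity/homogeneity of $\phi$ become visible. The appearance of $t(1+t)$ as the leading coefficient of the ODE reflects the interaction of $\Delta_x G$ (contributing the $t^2 F''$-term through the $\abs{x}^{-4}$-scaling of $\partial_{x_j}^2(\abs{x}^\alpha F(t))$) with $\Delta_y G$ (contributing the $t F''$-term through $\abs{x}^{-2}$-scaling); the coefficients of $F'$ and $F$ similarly accumulate from several distinct Leibniz terms and arrange themselves, through the defining value of $\alpha$, into the hypergeometric coefficients stated in the proposition.
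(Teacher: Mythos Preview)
Your proposal is correct and follows essentially the same approach as the paper: both check $\overline{P}_H$-equivariance directly via \eqref{eq:FPGroupAction1}--\eqref{eq:FPGroupAction4}, then reduce the $\frakn_H$-intertwining to the Bessel-operator identity $\calB_j^{n,\sigma}[Gf\phi]=G(\calB_j^{m,\tau}f)\phi$, expand by Leibniz using $\Delta_y\phi=0$ and $E_y\phi=k\phi$, and identify the residual $x_j\abs{x}^{\alpha-2}f\phi$-term with the stated ODE. The paper packages the Bessel computation into a separate lemma broken into four explicit pieces, while you give a more narrative sketch, but the organizing observations (radial symmetry of $G$ in $x$, the role of $\alpha(\alpha+\tau)=\alpha(\sigma-\mu-\alpha)$, the emergence of $t(1+t)F''$ from $\Delta_xG+\Delta_yG$) are the same.
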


\begin{proof}
  Put $\mu:=2k+n-m$ and $\alpha:=\frac{\sigma-\tau-\mu}{2}$ so that
  \begin{equation*}
    \Psi(f\otimes\phi)(x,y) = \abs{ x }^\alpha
    F(\tfrac{\abs{ y }^2}{\abs{ x }^2})f(x)\phi(y).
  \end{equation*}
  Since $\frakh=\frakn_H+\frakm_H+\fraka+\overline{\frakn}_H$ it
  suffices to check the intertwining property for $\overline{N}_H$,
  $M_H$, $A$ and $\frakn_H$.
  \begin{enumerate}
  \item For $\overline{n}_a\in\overline{N}_H$ both
    $\rho^G_{\sigma,\varepsilon}(\overline{n}_a)$ and
    $\rho^{O(1,m+1)}_{\tau,\varepsilon+k}(\overline{n}_a)$ are by
    \eqref{eq:FPGroupAction1} the multiplication operators
    $e^{i\Hermit{x}{a}}$ and hence the intertwining property is clear.
  \item Let $m=\diag(1,k_1,k_2,1)\in M_H^+$, $k_1\in O(m)$, $k_2\in
    O(n-m)$. Then with $m'=\diag(1,k_1,\1_{n-m+1})$ we have by
    \eqref{eq:FPGroupAction2}
    \begin{align*}
      \rho_{\sigma,\varepsilon}^G(m)\Psi(f\otimes\phi)(x,y) &=
      \Psi(f\otimes\phi)(k_1^{-1}x,k_2^{-1}y)
      \\
      &= \abs{ k_1^{-1}x }^\alpha
      F(\tfrac{\abs{ k_2^{-1}y }^2}{\abs{ k_1^{-1}x }^2})f(k_1^{-1}x)\phi(k_2^{-1}y)
      \\
      &= \abs{ x }^\alpha F(\tfrac{\abs{ y }^2}{\abs{ x }^2})f(k_1^{-1}x)\phi(k_2^{-1}y)
      \\
      &=
      \Psi(\rho_{\tau,\varepsilon+k}^{O(1,m+1)}(m')f\otimes(k_2\cdot\phi))(x,y).
      \intertext{Further, for $m_0$ we have with
        \eqref{eq:FPGroupAction3}}
      \rho_{\sigma,\varepsilon}^G(m_0)\Psi(f\otimes\phi)(x,y) &=
      (-1)^\varepsilon\Psi(f\otimes\phi)(-x,-y)
      \\
      &= (-1)^\varepsilon\abs{ (-x) }^\alpha
      F(\tfrac{\abs{ (-y) }^2}{\abs{ (-x) }^2})f(-x)\phi(-y)
      \\
      &= (-1)^{\varepsilon+k}\abs{ x }^\alpha
      F(\tfrac{\abs{ y }^2}{\abs{ x }^2})f(-x)\phi(y)
      \\
      &=
      \Psi(\rho_{\tau,\varepsilon+k}^{O(1,m+1)}(m_0)f\otimes\phi)(x,y).
    \end{align*}
  \item For $a=e^{tH}\in A$ we obtain with \eqref{eq:FPGroupAction4}
    \begin{align*}
      \rho_{\sigma,\varepsilon}^G(a)\Psi(f\otimes\phi)(x,y) &=
      e^{(\sigma-n)t}\Psi(f\otimes\phi)(e^{-2t}x,e^{-2t}y)
      \\
      &= e^{(\sigma-n)t}\abs{ e^{-2t}x }^\alpha
      F(\tfrac{\abs{ e^{-2t}y }^2}{\abs{ e^{-2t}x }^2})f(e^{-2t}x)\phi(e^{-2t}y)
      \\
      &= e^{(\sigma-n-2\alpha-2k)t}\abs{ x }^\alpha
      F(\tfrac{\abs{ y }^2}{\abs{ x }^2})f(e^{-2t}x)\phi(y)
      \\
      &= e^{(\tau-m)t}\abs{ x }^\alpha
      F(\tfrac{\abs{ y }^2}{\abs{ x }^2})f(e^{-2t}x)\phi(y)
      \\
      &=
      \Psi(\rho_{\tau,\varepsilon+k}^{O(1,m+1)}(a)f\otimes\phi)(x,y).
    \end{align*}
  \item To show the intertwining property for $\frakn_H$ it suffices
    by \eqref{eq:FPAlgebraAction4} to show the identity
    \begin{equation*}
      \calB_j^{n,\sigma}\Psi(f\otimes\phi) =
      \Psi(\calB_j^{m,\tau}f\otimes\phi)
    \end{equation*}
    for $j=1,\ldots,m$ which follows from the next lemma.\qedhere
  \end{enumerate}
\end{proof}

For $\sigma,\mu\in\CC$ we introduce the ordinary differential operator
\begin{equation}
  \calD_{\sigma,\mu} := t(1+t)\frac{\di^2}{\di
    t^2}+\left(\frac{\mu-\sigma+2}{2}t+\frac{\mu}{2}\right)\frac{\di}{\di
    t}.\label{eq:DefDsigmamu}
\end{equation}

\begin{lem}\label{lem:BesselOpsAndDSigmaMu}
  Let $\sigma,\tau,\alpha\in\CC$, $k\in\NN$, $\mu=2k+n-m$, $F\in
  C^\infty([0,\infty))$, $f\in C^\infty(\RR^m\minuszero )$ and
  $\phi\in\calH^k(\RR^{n-m})$. Then for every $j=1,\ldots,m$ we have
  \begin{align*}
    \MoveEqLeft \calB_j^{n,\sigma}\left[\abs{ x }^\alpha
      F(\tfrac{\abs{ y }^2}{\abs{ x }^2})f(x)\phi(y)\right]
    \\
    &= \abs{ x }^\alpha F(\tfrac{\abs{ y }^2}{\abs{ x }^2})\calB_j^{m,\tau}f(x)\phi(y) +
    x_j\abs{ x }^{\alpha-2}f(x)\phi(y)
    \left(4\calD_{\sigma,\mu}+\alpha(\sigma-\mu-\alpha)\right)
    F(\tfrac{\abs{ y }^2}{\abs{ x }^2}).
  \end{align*}
\end{lem}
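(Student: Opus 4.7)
The plan is to split coordinates as $(x,y) \in \RR^m \times \RR^{n-m}$ and expand
\[ \calB_j^{n,\sigma} = x_j(\Delta_x + \Delta_y) - (2E_x + 2E_y - \sigma + n)\partial_j, \qquad \partial_j = \partial/\partial x_j, \]
noting that for $j \le m$ every piece commutes with $\phi(y)$. Writing $u := \abs{x}^\alpha F(t) f(x) \phi(y)$ with $t = \abs{y}^2/\abs{x}^2$, I will use three basic inputs: that $\phi \in \calH^k(\RR^{n-m})$ forces $\Delta_y \phi = 0$ and $E_y \phi = k\phi$; that $t$ is scale-invariant with $E_x t = -2t$ and $E_y t = 2t$; and the Leibniz rule on the product $\abs{x}^\alpha \cdot F(t) \cdot f(x) \cdot \phi(y)$. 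I compute $x_j \Delta_x u$, $x_j \Delta_y u$, and $\partial_j u$ separately, the last splitting as $P_1 + P_2$ with $P_1 = x_j\abs{x}^{\alpha-2}(\alpha F - 2tF')f\phi$ and $P_2 = \abs{x}^\alpha F\phi \cdot \partial_j f$. Each $P_i$ is a product of an $(x,y)$-homogeneous factor and a function of $x$ alone, so $(2E-\sigma+n)P_i$ reduces to a homogeneity scalar times $P_i$ plus a single cross-term involving an $E_x$-derivative of the $f$-factor.

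I then classify every resulting term according to which derivative falls on $f$, obtaining a pure $f\phi$-sector, an $(E_x f)\phi$-sector, and a sector containing $\partial_j f$ and $\Delta_x f$. Using the scalar identity $2(\alpha + k) + n - \sigma = m - \tau$, which follows immediately from the hypothesis $\alpha = (\sigma - \tau - \mu)/2$ with $\mu = 2k + n - m$, I expect three clean outcomes. First, the $(E_x f)\phi$-sector cancels exactly: the cross term $2 x_j \abs{x}^{\alpha-2}(\alpha F - 2tF')(E_x f)\phi$ coming from $2 x_j \nabla_x(\abs{x}^\alpha F(t)) \cdot \nabla_x f \cdot \phi$ is killed by the corresponding contribution from $-(2E - \sigma + n)P_1$. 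Second, the $\partial_j f$- and $\Delta_x f$-contributions combine into $\abs{x}^\alpha F\phi \cdot [x_j \Delta_x - (2E_x - \tau + m)\partial_j]f = \abs{x}^\alpha F(t) \phi \cdot \calB_j^{m,\tau} f$. Third, the coefficient of $x_j \abs{x}^{\alpha-2} f \phi$ assembles into an ordinary differential operator in $t$ acting on $F$, which, after using $2\alpha + \tau + \mu = \sigma$ to simplify the $F'$- and $F$-coefficients, equals $(4\calD_{\sigma,\mu} + \alpha(\sigma - \mu - \alpha))F(t)$.

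The proof is entirely Leibniz-rule bookkeeping; there is no conceptual difficulty beyond organizing the terms. The main obstacle is therefore purely computational: one must correctly track on the order of a dozen contributions arising from the product structure of $u$, and invoke the relation $2\alpha + \tau + \mu = \sigma$ at two non-obvious spots, namely to match the combined $F'$-coefficient $(8 - 2m - 4\alpha)t + 2\mu + (2m - 2\tau - 4)t$ with the one in $4\calD_{\sigma,\mu}$, and to see that $\alpha(\alpha + m - 2) - (m - \tau - 2)\alpha$ collapses to $\alpha(\alpha + \tau) = \alpha(\sigma - \mu - \alpha)$. No tools beyond the spherical-harmonic identities $\Delta_y\phi = 0$ and $E_y\phi = k\phi$, the chain rule, and these algebraic substitutions are needed.
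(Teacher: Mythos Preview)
Your approach is correct and is essentially identical to the paper's own proof: both compute $x_j\Delta_x$, $x_j\Delta_y$, and $(2E-\sigma+n)\partial_j$ on the product via the Leibniz rule, use $\Delta_y\phi=0$ and $E_y\phi=k\phi$, and then sort the resulting terms by which derivative lands on $f$. You are also right that the relation $2\alpha+\tau+\mu=\sigma$ is what makes the $\partial_jf$-coefficient match $\calB_j^{m,\tau}$; the paper uses this implicitly (it is the value of $\alpha$ fixed in the preceding proposition) without isolating it in the lemma's statement.
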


\begin{proof}
  We first note the following basic identities, where
  $\frac{\partial}{\partial x}$ and $\frac{\partial}{\partial y}$ are
  the gradients in $x\in\RR^m$ and $y\in\RR^{n-m}$ respectively, and
  $\Delta_x$ and $\Delta_y$ the Laplacians on $\RR^m$ and $\RR^{n-m}$
  respectively:
  \begin{align*}
    \frac{\partial}{\partial x}\abs{ x }^\alpha &= \alpha\abs{ x
    }^{\alpha-2}x, & \Delta_x\abs{ x }^\alpha &=
    \alpha(\alpha+m-2)\abs{ x }^{\alpha-2},
    \\
    \frac{\partial}{\partial x}F(\tfrac{\abs{ y }^2}{\abs{ x }^2}) &=
    -\frac{2\abs{ y }^2}{\abs{ x }^4}F'(\tfrac{\abs{ y }^2}{\abs{ x
      }^2})x, & \Delta_xF(\tfrac{\abs{ y }^2}{\abs{ x }^2}) &=
    4\frac{\abs{ y }^4}{\abs{ x }^6}F''(\tfrac{\abs{ y }^2}{\abs{ x
      }^2})-2(m-4)\frac{\abs{ y }^2}{\abs{ x }^4}F'(\tfrac{\abs{ y
      }^2}{\abs{ x }^2}),
    \\
    \frac{\partial}{\partial y}F(\tfrac{\abs{ y }^2}{\abs{ x }^2}) &=
    \frac{2}{\abs{ x }^2}F'(\tfrac{\abs{ y }^2}{\abs{ x }^2})y, &
    \Delta_yF(\tfrac{\abs{ y }^2}{\abs{ x }^2}) &=
    \frac{4\abs{ y }^2}{\abs{ x }^4}F''(\tfrac{\abs{ y }^2}{\abs{ x
      }^2})
    +\frac{2(n-m)}{\abs{ x }^2}F'(\tfrac{\abs{ y }^2}{\abs{ x }^2}).
  \end{align*}
  The calculation is split into several parts. In what follows we
  abbreviate $t:=\frac{\abs{ y }^2}{\abs{ x }^2}$.
  \begin{enumerate}
  \item\label{item:prf:lem:2.2:i} We begin with calculating
    $x_j\Delta_x\Psi(f\otimes\phi)$:
    \begin{align*}
      \MoveEqLeft[3] x_j\Delta_x\Psi(f\otimes\phi)(x,y)
      \\
      ={}& \Psi(x_j\Delta_xf\otimes\phi)(x,y) +
      x_j\Delta_x\abs{ x }^\alpha\cdot F(\tfrac{\abs{ y }^2}{\abs{ x }^2})f(x)\phi(y)
      \\
      & + x_j\Delta_xF(\tfrac{\abs{ y }^2}{\abs{ x }^2})\cdot\abs{ x }^\alpha
      f(x)\phi(y) + 2x_j\frac{\partial\abs{ x }^\alpha}{\partial
        x}\cdot\frac{\partial f}{\partial x}(x)\cdot
      F(\tfrac{\abs{ y }^2}{\abs{ x }^2})\phi(y)
      \\
      & + 2x_j\frac{\partial\abs{ x }^\alpha}{\partial x}\cdot\frac{\partial
        F(\tfrac{\abs{ y }^2}{\abs{ x }^2})}{\partial x}\cdot f(x)\phi(y) +
      2x_j\frac{\partial F(\tfrac{\abs{ y }^2}{\abs{ x }^2})}{\partial
        x}\cdot\frac{\partial f}{\partial x}(x)\cdot\abs{ x }^\alpha\phi(y)
      \\
      ={}& \Psi(x_j\Delta_xf\otimes\phi)(x,y) +
      x_j\abs{ x }^{\alpha-2}Ef(x)\phi(y)\left(-4tF'(t)+2\alpha F(t)\right)
      \\
      & +
      x_j\abs{ x }^{\alpha-2}f(x)\phi(y)
      \left(4t^2F''(t)-2(2\alpha+m-4)tF'(t)+\alpha(\alpha+m-2)F(t)\right).
    \end{align*}
  \item\label{item:prf:lem:2.2:ii} Next we calculate $x_j\Delta_y\Psi(f\otimes\phi)$:
    \begin{align*}
      \MoveEqLeft[3]  x_j\Delta_y\Psi(f\otimes\phi)(x,y)
      \\
      ={}& x_j\Delta_yF(\tfrac{\abs{ y }^2}{\abs{ x }^2})\cdot \abs{ x }^\alpha
      f(x)\phi(y)+x_j\Delta_y\phi(y)\cdot\abs{ x }^\alpha
      F(\tfrac{\abs{ y }^2}{\abs{ x }^2})f(x)
      \\
      & + 2x_j\frac{\partial F(\tfrac{\abs{ y }^2}{\abs{ x }^2})}{\partial
        y}\cdot\frac{\partial\phi}{\partial y}\cdot\abs{ x }^\alpha f(x)
      \\
      ={}&
      x_j\abs{ x }^{\alpha-2}f(x)\phi(y)\left(4tF''(t)+2(2k+n-m)F'(t)\right)
    \end{align*}
    since $E\phi=k\phi$ and $\Delta_y\phi=0$.
  \item\label{item:prf:lem:2.2:iii} We now calculate $\frac{\partial}{\partial
      x_j}\Psi(f\otimes\phi)$:
    \begin{align*}
      \MoveEqLeft[3]  \frac{\partial}{\partial x_j}\Psi(f\otimes\phi)(x,y)
      \\
      ={}& \frac{\partial\abs{ x }^\alpha}{\partial x_j}\cdot
      F(\tfrac{\abs{ y }^2}{\abs{ x }^2})f(x)\phi(y) + \frac{\partial
        F(\frac{\abs{ y }^2}{\abs{ x }^2})}{\partial x_j}\cdot\abs{ x }^\alpha
      f(x)\phi(y)
      \\
      & + \frac{\partial f}{\partial x_j}(x)\cdot\abs{ x }^\alpha
      F(\tfrac{\abs{ y }^2}{\abs{ x }^2})\phi(y)
      \\
      ={}& \frac{\partial f}{\partial x_j}(x)\cdot\abs{ x }^\alpha
      F(\tfrac{\abs{ y }^2}{\abs{ x }^2})\phi(y) +
      x_j\abs{ x }^{\alpha-2}f(x)\phi(y)\left(-2tF'(t)+\alpha F(t)\right).
    \end{align*}
  \item\label{item:prf:lem:2.2:iv} Next we find $(2E-\sigma+n)\frac{\partial}{\partial
      x_j}\Psi(f\otimes\phi)$ by using (iii):
    \begin{align*}
      \MoveEqLeft[3] (2E-\sigma+n)\frac{\partial}{\partial
        x_j}\Psi(f\otimes\phi)(x,y)
      \\
      ={}& (2E-\sigma+n+2(\alpha+k))\frac{\partial f}{\partial
        x_j}(x)\cdot\abs{ x }^\alpha F(\tfrac{\abs{ y }^2}{\abs{ x }^2})\phi(y)
      \\
      & + 2x_j\abs{ x }^{\alpha-2}Ef(x)\phi(y)\left(-2tF'(t)+\alpha
        F(t)\right)
      \\
      & +
      (2(\alpha+k-1)-\sigma+n)x_j\abs{ x }^{\alpha-2}f(x)\phi(y)\left(-2tF'(t)+\alpha
        F(t)\right)
    \end{align*}
    since $E\abs{ x }^\beta=\beta\abs{ x }^\beta$, $EF(\frac{\abs{ y }^2}{\abs{ x }^2})=0$ and
    $E\phi=k\phi$.
  \end{enumerate}
  Now, putting \ref{item:prf:lem:2.2:i}, \ref{item:prf:lem:2.2:ii} and
  \ref{item:prf:lem:2.2:iv} together gives the claimed identity.
\end{proof}

\section{Spectral decomposition of an ordinary second order differential operator}\label{sec:SpectralDecomp}

Proposition~\ref{prop:IntertwiningProperty} and
Lemma~\ref{lem:BesselOpsAndDSigmaMu} suggest that the decomposition
of the $O(n-m)$-isotypic component of
$\calH^k(\RR^{n-m})$ in $\rho_{\sigma,\varepsilon}^G$ into irreducible
$O(1,m+1)$-representations is given by the spectral decomposition of
the second order differential operator $\calD_{\sigma,\mu}$ defined in
\eqref{eq:DefDsigmamu} where $\mu=2k+n-m$. In this section we find the
spectral decomposition of $\calD_{\sigma,\mu}$ acting on
$L^2(\RR_+,t^{\frac{\mu-2}{2}}(1+t)^{-\frac{\Real\sigma}{2}}\td t)$
using the theory developed by Weyl--Stone--Kodaira--Titchmarsh.

\subsection{Kodaira's result}\label{subsec:Kodaira}

The spectral decomposition formula for general self-adjoint
 ordinary differential operators of the second order
 was established by Kodaira~\cite{KodSugaku1,Kod49}
 and Titchmarsh~\cite{Tit46}.
In \cite{Kod49} and \cite{KodSugaku2}, 
 Kodaira studied Schr\"{o}dinger type operators in detail
 and deduced a simpler formula for the spectral measure
 of these operators, which also laid a mathematical foundation
 for Heisenberg's $S$-matrix theory.
We can apply this simpler formula to our setting, because
 $\calD_{\sigma,\mu}$ turns out to be a Schr\"{o}dinger type operator
 after a suitable change of variables.

We first recall Kodaira's spectral decomposition theorem
 for Schr\"{o}dinger type operators (see the original papers
 \cite{Kod49} or \cite{KodSugaku2} for the proof).
Let 
\begin{align*}
L=-\frac{\di^2}{\di x^2}+\frac{\nu(\nu+1)}{x^2}+V(x) \quad 
(0<x<\infty),
\end{align*}
where $\nu\geq-\frac{1}{2}$ and $V(x)$ is a real-valued
 continuous function such that
\begin{align*}
V(x)=O(x^{-2+\epsilon})\ \text{as}\ x\to 0, \quad 
V(x)=\frac{\alpha+O(x^{-\epsilon})}{x}\ \text{as}\ x\to \infty
\end{align*}
for some $\alpha\in \RR$ and $\epsilon>0$.
A system of two solutions $s_1(x,\lambda)$, $s_2(x,\lambda)$ to
 $Lu=\lambda u\,(\lambda\in\CC)$
is called {\it a system of fundamental solutions} if it has the following
 three properties:
\begin{itemize}
\item $W(s_2,s_1)=1$, where
 $\displaystyle W(u,v)=u\frac{\di v}{\di x} - v\frac{\di u}{\di x}$
 denotes the Wronskian,
\item $s_j(x,\overline{\lambda})=\overline{s_j(x,\lambda)}$ for $j=1,2$, 
\item $s_j(x,\lambda)$ and
 $\displaystyle\frac{\di}{\di x}s_j(x,\lambda)$
 are holomorphic in $\lambda\in\CC$ for $j=1,2$.
\end{itemize}

For Schr\"{o}dinger type operators there exists a system of fundamental
 solutions $s_1$, $s_2$ with the following asymptotic behaviour as $x\to0$:
\begin{align*}
s_1(x,\lambda) &\sim x^{\nu+1}, & s_2(x,\lambda)
 &\sim \frac{1}{2\nu+1}x^{-\nu}
 && \text{ if } \nu> -\frac{1}{2},\\
s_1(x,\lambda) &\sim x^{\frac{1}{2}}, &
 s_2(x,\lambda) &\sim -x^{\frac{1}{2}}\log x
 && \text{ if } \nu= -\frac{1}{2}.
\end{align*}
We note that the function $s_1$ is uniquely determined
 because a solution to $Lu=\lambda u$ with
 $u(x) \sim x^{\nu+1}$ is unique.
Since $s_1$ is $L^2$ near $x=0$ for any $\nu\geq -\frac{1}{2}$ 
 and $s_2$ is $L^2$ near $x=0$ if and only if
 $\nu < \frac{1}{2}$, we conclude that
 $x=0$ is of limit point type (LPT) if $\nu\geq \frac{1}{2}$
 and of limit circle type (LCT) if $-\frac{1}{2}\leq\nu<\frac{1}{2}$. 
In the case of (LCT) at $x=0$ we impose the following additional boundary
 condition (which is automatic in the case of (LPT)):
\begin{equation}
  \lim_{x\to0}W(s_1(-,0),u)(x) =  0.\tag{BC}\label{eq:BoundaryCondition}
\end{equation}
Then in both the (LPT) and the (LCT) case $s_1(x,\lambda)$ is the unique solution to
$Lu=\lambda u$ which is $L^2$ near $x=0$ and satisfies
the boundary condition~\eqref{eq:BoundaryCondition}.

On the other hand, the point $x=\infty$ is always of (LPT)
 and we have:
\begin{thm}[{\cite[Theorem 5.1]{Kod49}}, {\cite[Theorem 26]{KodSugaku2}}]\label{thm:L2SolutionAtInfty}
If $\Imaginary \kappa\geq 0$ and $\kappa\neq 0$, the equation $Lu=\kappa^2u$
has one and only one solution $u_0(-,\kappa)$ such that
\begin{align*}
u_0(x,\kappa)\sim \exp\left(i\kappa x-\frac{i\alpha}{2\kappa}\log x\right)
\qquad \text{ as }x\to \infty.
\end{align*}
As functions of the two variables $x$ and $\kappa$, $u_0(x,\kappa)$ and 
$\frac{\di}{\di x}u_0(x,\kappa)$ are continuous in $0<x<\infty$,
 $\Imaginary \kappa \geq 0$ and $\kappa\neq 0$.
As functions of $\kappa$, they are holomorphic in $\Imaginary \kappa >0$.
\end{thm}
The differential operator $L$ defines a self-adjoint operator
 on $L^2(\RR_+)$ with domain
 the space of functions $u$ satisfying the following five conditions:
\begin{itemize}
\item $u\in L^2(\RR_+)$,
\item $u$ is differentiable,
\item $\displaystyle\frac{\di u}{\di x}$ is absolutely continuous
 in every closed interval $[a,b]\ (0<a<b<\infty)$,
\item $Lu\in L^2(\RR_+)$,
\item $u$ satisfies the boundary condition~\eqref{eq:BoundaryCondition}.
\end{itemize}

The spectral decomposition of $L$ is given in terms of
 the functions $A(\kappa)$ and $B(\kappa)$
 defined by
\begin{align*}
u_0(x,\kappa)=A(\kappa)s_2(x,\kappa^2)-B(\kappa)s_1(x,\kappa^2).
\end{align*}
This equation implies 
$$ A(\kappa)=W(u_0(-,\kappa), s_1(-,\kappa^2)) \qquad \text{ and } \qquad B(\kappa)=W(u_0(-,\kappa), s_2(-,\kappa^2)). $$
The functions $A(\kappa)$ and $B(\kappa)$
 are holomorphic in $\Imaginary\kappa >0$ and continuous
 in $\Imaginary \kappa \geq 0$ and $\kappa\neq 0$.
In $\Imaginary\kappa >0$, all zeros of $A(\kappa)$ lie on 
 the imaginary axis and are of order one.
Denote these zero points by $\kappa_j=i|\kappa_j|\ (j\in J)$.
Then it can be proved that
 the discrete spectrum of $L$ is $\lambda=\kappa_j^2$
 and possibly $\lambda=0$.
The continuous spectrum of $L$ is the interval $[0,\infty)$.
The eigenfunction expansion formula for $L$ is
\begin{thm}[\cite{Kod49,KodSugaku2}]
\label{thm:SpectralDecomposition}
In the setting and the notation above,
 we have an expansion of any $L^2$-function $u(x)$ of the following form
\begin{align}\label{eq:SpectralDecomposition}
u(x)&=\sum_{j\in J} s_1(x,\kappa_j^2)\rho_j
 \int_0^{\infty} s_1(y,\kappa_j^2)u(y) \td y
 + s_1(x,0)\rho^0 \int_0^{\infty} s_1(y,0)u(y) \td y \nonumber \\
&+\frac{2}{\pi} \int_0^{\infty} s_1(x,\kappa^2)
 \frac{\kappa^2}{|A(\kappa)|^2}
 \int_0^{\infty} s_1(y,\kappa^2)u(y) \td y\td \kappa,
\end{align}
where 
\begin{align*}
\rho_j=\frac{1}{\pi}|\kappa_j|B(\kappa_j)
 \oint_{\kappa_j}\frac{\td \kappa}{A(\kappa)}, \text{ and } 
\rho^0=\lim_{\epsilon\to +0}\frac{1}{\pi}
 \int_0^\pi \frac{B(\epsilon e^{i\theta})}{A(\epsilon e^{i\theta})}
 \epsilon^2e^{2i\theta} \td \theta.
\end{align*}
\end{thm}
We remark that $\rho^0=0$ in many cases.

To reformulate it as an isomorphism between Hilbert spaces put 
\begin{align*}
S:=\{\kappa_j^2:j\in J\}(\cup\{0\})\cup \RR_+,
\end{align*}
where $\{0\}$ is included if $\rho^0>0$.
Define a measure on $S$ by
\begin{align*}
  \int_{S}{g(\lambda)\td\rho(\lambda)} :=
\sum_{j\in J} \rho_j g(\kappa_j^2)
( + \rho^0 g(0) ) 
+\frac{1}{\pi} \int_0^{\infty} 
 \frac{\sqrt{\lambda}}{|A(\sqrt{\lambda})|^2}
  g(\lambda) \td \lambda.
\end{align*}
Then by \cite[Theorem 4.2]{Kod49} or \cite[Theorem 19]{KodSugaku2}:
\begin{thm}\label{thm:SpectralDecomposition2}
The map
  \begin{equation*}
    L^2(\RR_+) \xrightarrow{\ \sim\ }
    L^2(S,\td\rho), \qquad
    u \mapsto
    g(\lambda)=\int_0^\infty{s_1(x,\lambda)u(x)\td x},
  \end{equation*}
  is a unitary isomorphism with inverse
  \begin{equation*}
    L^2(S,\td\rho) \xrightarrow{\ \sim\ }
    L^2(\RR_+), \qquad g \mapsto
    u(x)=\int_{S}{s_1(x,\lambda)
      g(\lambda)\td\rho(\lambda)}.
  \end{equation*}
\end{thm}

\subsection{Simplifications}

In the rest of this section we apply the above result to find the spectral
 decomposition of $\calD_{\sigma,\mu}$.
We fix $\sigma\in i\RR\cup(0,\infty)$. (In the
case $\sigma\in(-\infty,0)$ only the derivation of the discrete spectrum 
 is slightly different. However,
since $\pi_{\sigma,\varepsilon}\cong\pi_{-\sigma,\varepsilon}$
for $\sigma \in (-n,n)$
the decomposition of the representations is again the same and it suffices
to consider $\sigma\in i\RR\cup(0,\infty)$ for our purpose.) 
Further fix $k\in\NN$ and
put $\mu:=2k+n-m$. We assume $m<n$ so that
$\mu>0$. Writing
\begin{equation*}
  \calD_{\sigma,\mu} = t(1+t)\frac{\di^2}{\di
    t^2}+\left((a+b+1)t+c\right)\frac{\di}{\di t}
\end{equation*}
with
\begin{equation*}
  a = -\frac{\sigma-\mu}{4}+\frac{\tau}{4}, 
  \qquad
  b =  -\frac{\sigma-\mu}{4}-\frac{\tau}{4}, 
  \qquad
  c = \frac{\mu}{2},
\end{equation*}
it is easy to see from \eqref{eq:DiffEqHypergeometric} that the
hypergeometric function
\begin{equation}
  F(t,\tau) := {_2F_1}\left(a,b;c;-t\right) \label{eq:DefF}
\end{equation}
solves the equation
\begin{equation*}
  \calD_{\sigma,\mu}f+\lambda^*f = 0, 
  \qquad
  \lambda^* = ab =
  \left(\frac{\sigma-\mu}{4}\right)^2-\left(\frac{\tau}{4}\right)^2.
\end{equation*}
We find a spectral decomposition of $\calD_{\sigma,\mu}$ in terms
of~$F(t,\tau)$.

First make the
transformation $t=\sinh^2(\frac{x}{2})$. Using $t\frac{\di}{\di
  t}=\tanh(\frac{x}{2})\frac{\di}{\di x}$ we write the operator
$\calD_{\sigma,\mu}$ as
\begin{align*}
  \calD_{\sigma,\mu} &= \frac{1}{t}\left((1+t)\left(t\frac{\di}{\di
        t}\right)^2+\left(\frac{\mu-\sigma}{2}t+\frac{\mu-2}{2}\right)t\frac{\di}{\di
      t}\right)
  \\
  &= \frac{\di^2}{\di x^2}+\beta(x)\frac{\di}{\di x}
\end{align*}
with
\begin{equation*}
  \beta(x) =
  \frac{\mu-1}{2}\tanh\left(\frac{x}{2}\right)^{-1}-\frac{\sigma-1}{2}\tanh\left(\frac{x}{2}\right).
\end{equation*}
Putting
\begin{equation*}
  u(x)=r(x)^{-1}f\left(\sinh^2\left(\frac{x}{2}\right)\right) 
  \qtextq{with}  r(x) =
  \sinh\left(\frac{x}{2}\right)^{-\frac{\mu-1}{2}}\cosh\left(\frac{x}{2}\right)^{\frac{\sigma-1}{2}}
\end{equation*}
we finally see that the differential equation
$\calD_{\sigma,\mu}f+\lambda^*f=0$ is equivalent to
\begin{equation*}
  -\frac{\di^2 u}{\di x^2}+q^*(x)u = \lambda^* u
\end{equation*}
with
\begin{align*}
  q^*(x) &= \tfrac{1}{4}\beta(x)^2+\tfrac{1}{2}\beta'(x)
  \\
  &= \frac{(\mu-1)(\mu-3)}{16}\tanh\left(\frac{x}{2}\right)^{-2}
  -\frac{\mu(\sigma-2)+1}{8}+\frac{(\sigma+1)(\sigma-1)}{16}\tanh\left(\frac{x}{2}\right)^2.
\end{align*}
To stay in line with the setting in Section~\ref{subsec:Kodaira} we shift the
eigenvalues by putting
$q(x):=q^*(x)-\left(\frac{\sigma-\mu}{4}\right)^2$ and
$\lambda:=\lambda^*-\left(\frac{\sigma-\mu}{4}\right)^2$ and obtain
\begin{equation}
  -\frac{\di^2u}{\di x^2}+q(x)u= \lambda u.
\label{eq:NormalizedDiffEq}
\end{equation}
Note that $q(x)$ is real-valued for $\sigma\in i\RR\cup\RR$ and hence
the operator $-\frac{\di^2}{\di x^2}+q(x)$ is formally self-adjoint on
$L^2(\RR_+)$.
Moreover by putting $\nu=\frac{\mu-3}{2}$ and $\alpha=0$,
 the differential operator
 $L=-\frac{\di^2}{\di x^2}+q(x)$ is of Schr\"{o}dinger type
 if $\mu\geq 2$.
This is also true for $\mu=1$ if we put $\nu=0$.
However, we should rather put $\nu=-1$ 
 in order to impose an appropriate boundary condition. 
Thus we cannot use the general result in Section~\ref{subsec:Kodaira}
 directly for $\mu=1$, but one can see that the proof of 
 Theorem~\ref{thm:SpectralDecomposition}
 in \cite{Kod49} or \cite{KodSugaku2}
 is still valid in this case
 and thus \eqref{eq:SpectralDecomposition} gives
 the correct formula.

\subsection{Singularities and the boundary condition}

We put $\nu=\frac{\mu-3}{2}$ for $\mu\geq 1$
 and $\kappa=\sqrt{\lambda}$.
The differential equation \eqref{eq:NormalizedDiffEq} has regular
singular points at $x=0$ and $x=\infty$. The corresponding asymptotic
behaviour of solutions at $x=0$ is given by $x^{\frac{\mu-1}{2}}$ and
$x^{-\frac{\mu-3}{2}}$ for $\mu\neq 2$ and by $x^{\frac{1}{2}}$ and
$\log(x)x^{\frac{1}{2}}$ for $\mu=2$. Hence $x=0$ is of limit point
type (LPT) if $\mu\geq 4$ and of limit circle type (LCT) if
$\mu=1,2,3$. The solution
\begin{equation*}
  s_1(x,\lambda) = 2^{\frac{\mu-1}{2}}
 r(x)^{-1}{_2F_1}(a,b;c;-\sinh^2(\tfrac{x}{2}))
\end{equation*}
 has asymptotic behaviour $x^{\frac{\mu-1}{2}}(=x^{\nu+1})$
 near $x=0$, where
\begin{equation*}
  a = -\frac{\sigma-\mu}{4}+i\kappa, 
  \qquad
  b =
  -\frac{\sigma-\mu}{4}-i\kappa, 
  \qquad c = \frac{\mu}{2}.
\end{equation*}
Note that $s_1(x,\lambda)$ is holomorphic in $\lambda\in\CC$ and
 $\overline{s_1(x,\lambda)}=s_1(x,\overline{\lambda})$
 if $\sigma\in\RR\cup i\RR$
 by Kummer's transformation formula \eqref{eq:KummerFormula}.
For $\mu\geq 2$, $s_1$ is the unique solution which has asymptotic behaviour
 $x^{\frac{\nu+1}{2}}$ near $x=0$. 
Hence we can find $s_2$ such that
 $s_1$, $s_2$ is a system of fundamental solutions.
For $\mu=1$, put
\begin{equation*}
  s_2(x,\lambda) = -2 r(x)^{-1} \sinh(\tfrac{x}{2})
 {_2F_1}(1+a-c,1+b-c;2-c;-\sinh^2(\tfrac{x}{2})).
\end{equation*}
Then $s_2$ has asymptotic behaviour $-x$ near $x=0$
 and  $s_1$, $s_2$ is a system of fundamental solutions.

In the case of (LCT)
at $x=0$ we impose the additional boundary condition~\eqref{eq:BoundaryCondition}.
Then in both the (LPT) and the (LCT) case (i.e.\ for every $\mu\geq 1$) 
 $s_1(x,\lambda)$ is the unique solution to
\eqref{eq:NormalizedDiffEq} which is $L^2$ near $x=0$ and satisfies
the boundary condition~\eqref{eq:BoundaryCondition}.

In view of Theorem~\ref{thm:L2SolutionAtInfty} we consider another solution
\begin{align*}
  u_0(x,\kappa) &= 2^{2i\kappa}
  r(x)^{-1}\sinh^{-2b}(\tfrac{x}{2}){_2F_1}(b,b-c+1;b-a+1;-\sinh^{-2}(\tfrac{x}{2})),
\end{align*}
which has asymptotic behaviour $e^{ix\kappa}$ as
$x\to \infty$ and hence is $L^2$ near $x=\infty$ for
$\Imaginary \kappa >0$. Note that a linearly independent solution is obtained by
interchanging $a$ and $b$ and has asymptotics $e^{-ix\kappa}$
whence $x=\infty$ is always of (LPT).

Altogether the operator in
\eqref{eq:NormalizedDiffEq} extends to a self-adjoint operator on
$L^2(\RR_+)$ under the boundary
condition~\eqref{eq:BoundaryCondition} and its spectral decomposition
is given by Theorem~\ref{thm:SpectralDecomposition}. We now make this
spectral decomposition explicit.

\subsection{The function $A(\kappa)$}

We calculate the Wronskian
\begin{align*}
A(\kappa)={}&W(u_0(-,\kappa),s_1(-,\kappa^2))
  \\
  ={}&
  2^{\frac{\mu-1}{2}+2i\kappa}
  r(x)^{-2}W( \sinh^{-2b}(\tfrac{-}{2})
  {_2F_1}(b,b-c+1;b-a+1;-\sinh^{-2}(\tfrac{-}{2})),
  \\
  & \qquad\qquad\qquad
  {_2F_1}(a,b;c;-\sinh^2(\tfrac{-}{2})))(x)
  \\
  ={}&
  2^{\frac{\mu-1}{2}+2i\kappa}
  r(x)^{-2}\sinh(\tfrac{x}{2})\cosh(\tfrac{x}{2})
  \\
  & \times
  W(z^{-b}{_2F_1}(b,b-c+1;b-a+1;-\tfrac{1}{z}),
  {_2F_1}(a,b;c;-z))(\sinh^2(\tfrac{x}{2}))
  \\
  ={}&
  2^{\frac{\mu-1}{2}+2i\kappa}
  r(x)^{-2}\sinh(\tfrac{x}{2})\cosh(\tfrac{x}{2})\frac{\Gamma(b-a)
    \Gamma(c)}{\Gamma(b)\Gamma(c-a)}
  \\
  & \times
  W(z^{-b}{_2F_1}(b,b-c+1;b-a+1;-\tfrac{1}{z}),\\
  &\qquad\qquad
  z^{-a}{_2F_1}(a,a-c+1;a-b+1;-\tfrac{1}{z}))(\sinh^2(\tfrac{x}{2}))
  \\
  ={}& 2^{\frac{\mu-1}{2}+2i\kappa}
  (b-a)\frac{\Gamma(b-a)\Gamma(c)}{\Gamma(b)\Gamma(c-a)}
  \\
  ={}& 2^{\frac{\mu-1}{2}+2i\kappa}
  \frac{\Gamma(-2i\kappa+1)
    \Gamma(\frac{\mu}{2})}{\Gamma(-\frac{\sigma-\mu}{4}
    -i\kappa)\Gamma(\frac{\sigma+\mu}{4}-i\kappa)}.
\end{align*}
Then we see that all the zeros of $A(\kappa)$
 in the upper half-plane $\Imaginary\kappa >0$ are of order one
 and exactly at the points
 where $-\frac{\sigma-\mu}{4}-i\kappa\in-\NN$. This gives
 $i\kappa=-\frac{\sigma-\mu}{4}+j$ and
 $\lambda=-\left(\frac{\sigma-\mu}{4}-j\right)^2$ for $j\in\NN$
 with $j<\frac{\sigma-\mu}{4}$.
Hence $A(\kappa)$ has zeros in $\Imaginary\kappa >0$
 if and only if $\sigma\in\RR$ and $\sigma>\mu$.
If this is the case, we put $\kappa_j=i(\frac{\sigma-\mu}{4}-j)$
 for $j\in [0,\frac{\sigma-\mu}{4})\cap \ZZ$.
Using $\Residue_{z=-n}\Gamma(z)=\frac{(-1)^n}{n!}$ we find
  \begin{align}   \label{eq:AResidue}
    \Residue_{\kappa=\kappa_j}
    \frac{1}{A(\kappa)}    ={}&
    2^{-\frac{\mu-1}{2}-2i\kappa_j}
    \frac{\Gamma(\frac{\sigma}{2}-j)}{
      \Gamma(\frac{\sigma-\mu}{2}-2j+1)\Gamma(\frac{\mu}{2})}
    \Residue_{\kappa=\kappa_j}
    {\Gamma\left(-\frac{\sigma-\mu}{4}-i\kappa\right)}
    \nonumber\\
    ={}&
    2^{-\frac{\mu-1}{2}-2i\kappa_j}
    \frac{\Gamma(\frac{\sigma}{2}-j)}{
     \Gamma(\frac{\sigma-\mu}{2}-2j+1)\Gamma(\frac{\mu}{2})}
    \frac{(-1)^j}{(-i)j!}.
  \end{align}
To calculate $B(\kappa)$ for $\kappa=\kappa_j$,
 we note that $b=-j$ and
 therefore, by \eqref{eq:2F1Identity1}
 \begin{align*}
   2^{-\frac{\mu-1}{2}} s_1(x,\kappa_j^2) = 
    \frac{(a)_j\Gamma(c)}{\Gamma(c+j)} 2^{-2i\kappa_j}u_0(x,\kappa_j) 
    =2^{-2i\kappa_j}
    \frac{(-\frac{\sigma-\mu}{2}+j)_j
      \Gamma(\frac{\mu}{2})}{\Gamma(\frac{\mu}{2}+j)}u_0(x,\kappa_j).
 \end{align*}
Here, $(a)_j= a(a+1)\cdots (a+j-1)$ denotes the Pochhammer symbol.
As a result,
\begin{align}   \label{eq:BKappaj}
 B(\kappa_j)= -2^{-\frac{\mu-1}{2}+2i\kappa_j}
 \frac{\Gamma(\frac{\mu}{2}+j)}
 {(-\frac{\sigma-\mu}{2}+j)_j\Gamma(\frac{\mu}{2})}.
\end{align}

\subsection{The spectral theorem for $\calD_{\sigma,\mu}$}

For $\mu\geq 2$ Theorem~\ref{thm:SpectralDecomposition}
 gives the spectral formula \eqref{eq:SpectralDecomposition}.
Following the proof in \cite{Kod49} or \cite{KodSugaku2}
 it is easy to see that \eqref{eq:SpectralDecomposition}
 is still valid for $\mu=1$.
For $\kappa>0$ we calculate
\begin{align*}
\frac{\kappa^2}{|A(\kappa)|^2}
&=2^{-(\mu-1)}\kappa^2
  \left|\frac{\Gamma(-\frac{\sigma-\mu}{4}
    -i\kappa)\Gamma(\frac{\sigma+\mu}{4}-i\kappa)}
  {\Gamma(-2i\kappa+1)\Gamma(\frac{\mu}{2})}\right|^2\\
&=2^{-(\mu+1)}
  \left|\frac{\Gamma(-\frac{\sigma-\mu}{4}+i\kappa)
  \Gamma(\frac{\sigma+\mu}{4}+i\kappa)}
  {\Gamma(2i\kappa)\Gamma(\frac{\mu}{2})}\right|^2.
\end{align*}
For $j\in [0,\frac{\Real\sigma-\mu}{4})\cap \ZZ$ we have 
\begin{align*}
\rho_j&=\frac{1}{\pi}|\kappa_j|B(\kappa_j)
 \oint_{\kappa_j}\frac{\td \kappa}{A(\kappa)} \\
&= \frac{1}{\pi}\left(\frac{\sigma-\mu}{4}-j\right)
 \times\frac{-2^{-\frac{\mu-1}{2}+2i\kappa_j}\Gamma(\frac{\mu}{2}+j)}
 {(-\frac{\sigma-\mu}{2}+j)_j\Gamma(\frac{\mu}{2})}
 \times 2\pi i 
    \frac{2^{-\frac{\mu-1}{2}-2i\kappa_j}i(-1)^j
      \Gamma(\frac{\sigma}{2}-j)}{
      j!\Gamma(\frac{\sigma-\mu}{2}-2j+1)\Gamma(\frac{\mu}{2})} \\
&= \frac{2^{-(\mu-1)}(\frac{\sigma-\mu}{2}-2j)\Gamma(\frac{\sigma}{2}-j)
      \Gamma(\frac{\mu}{2}+j)}{
     j!\Gamma(\frac{\mu}{2})^2
      \Gamma(\frac{\sigma-\mu}{2}-j+1)}
\end{align*}
by \eqref{eq:AResidue} and \eqref{eq:BKappaj}.
Moreover, since $\frac{B(\kappa)}{A(\kappa)}$ has at most a pole of
 order one at $x=0$,  
\begin{align*}
\rho^0=\lim_{\epsilon\to +0}\frac{1}{\pi}
 \int_0^\pi \frac{B(\epsilon e^{i\theta})}{A(\epsilon e^{i\theta})}
 \epsilon^2e^{2i\theta} \td \theta=0.
\end{align*}
Consequently, \eqref{eq:SpectralDecomposition} gives the expansion formula:
\begin{align*}
u(x)={}&\sum_{j\in [0,\frac{\Real \sigma-\mu}{4})\cap \ZZ} s_1(x,\kappa_j^2)
\frac{2^{-(\mu-1)}(\frac{\sigma-\mu}{2}-2j)\Gamma(\frac{\sigma}{2}-j)
      \Gamma(\frac{\mu}{2}+j)}{
     j!\Gamma(\frac{\mu}{2})^2
      \Gamma(\frac{\sigma-\mu}{2}-j+1)}
  \int_0^{\infty} s_1(y,\kappa_j^2)u(y) \td y \\
&+\frac{1}{\pi} \int_0^{\infty} s_1(x,\kappa^2)
  2^{-\mu}
  \left|\frac{\Gamma(-\frac{\sigma-\mu}{4}+i\kappa)
  \Gamma(\frac{\sigma+\mu}{4}+i\kappa)}
  {\Gamma(2i\kappa)\Gamma(\frac{\mu}{2})}\right|^2
 \int_0^{\infty} s_1(y,\kappa^2)u(y) \td y\td \kappa.
\end{align*}
Using the different normalization
\begin{align*}
\eta_1(x,\lambda):=r(x)^{-1}{_2F_1}(a,b;c;-\sinh^2(\tfrac{x}{2}))
 (=2^{-\frac{\mu-1}{2}}s_1(x,\lambda)),
\end{align*}
this can be rewritten as 
\begin{align*}
u(x)={}&\sum_{j\in [0,\frac{\Real \sigma-\mu}{4})\cap \ZZ}
 \eta_1(x, \textstyle-\left(\frac{\sigma-\mu}{4}-j\right)^2)
\displaystyle
\frac{(\frac{\sigma-\mu}{2}-2j)\Gamma(\frac{\sigma}{2}-j)
      \Gamma(\frac{\mu}{2}+j)}{
     j!\Gamma(\frac{\mu}{2})^2
      \Gamma(\frac{\sigma-\mu}{2}-j+1)} \\
&\qquad \qquad \qquad \qquad \qquad \qquad \qquad \qquad \times
 \int_0^{\infty} \eta_1(y,\textstyle-\left(\frac{\sigma-\mu}{4}-j\right)^2)u(y)
 \td y \\
&+\frac{1}{4\pi} \int_0^{\infty} \eta_1(x,\lambda)
  \left|\frac{\Gamma(-\frac{\sigma-\mu}{4}+i\sqrt{\lambda})
  \Gamma(\frac{\sigma+\mu}{4}+i\sqrt{\lambda})}
  {\Gamma(2i\sqrt{\lambda})\Gamma(\frac{\mu}{2})}\right|^2
 \int_0^{\infty} \eta_1(y,\lambda)u(y) \td y
 \frac{\td \lambda}{\sqrt{\lambda}}.
\end{align*}

To obtain an isomorphism between Hilbert spaces let
\begin{align*}
  S(\sigma,\mu) &:=
  (0,\infty)\cup\bigcup_{j\in[0,\frac{\Real\sigma-\mu}{4})\cap\ZZ}
  {\left\{-\left(\frac{\sigma-\mu}{4}-j\right)^2\right\}}.
\end{align*}
Note that $S(\sigma,\mu)=(0,\infty)$ for $\sigma\in i\RR$. On
$S(\sigma,\mu)$ we define a measure $\di\nu_{\sigma,\mu}$ by
\begin{multline*}
  \int_{S(\sigma,\mu)}{g(\lambda)\td\nu_{\sigma,\mu}(\lambda)} :=
  \frac{1}{4\pi}\int_0^\infty{g(\lambda) \abs*{
      \frac{\Gamma(-\frac{\sigma-\mu}{4}
        +i\sqrt{\lambda})\Gamma(\frac{\sigma+\mu}{4}
        +i\sqrt{\lambda})}{\Gamma(2i\sqrt{\lambda})
        \Gamma(\frac{\mu}{2})} }^2\frac{\td\lambda}{\sqrt{\lambda}}}
  \\
  +\sum_{j\in[0,\frac{\Real\sigma-\mu}{4})\cap\ZZ}
  {\frac{(\frac{\sigma-\mu}{2}-2j)\Gamma(\frac{\sigma}{2}-j)
      \Gamma(\frac{\mu}{2}+j)}{j!\Gamma(\frac{\mu}{2})^2
      \Gamma(\frac{\sigma-\mu}{2}-j+1)}
    g({\textstyle-\left(\frac{\sigma-\mu}{4}-j\right)^2})}.
\end{multline*}
Then by Theorem~\ref{thm:SpectralDecomposition2}:

\begin{thm}\label{thm:DiffOpSpectralDecomposition}
  For $\sigma\in i\RR\cup(0,\infty)$ and $\mu\in\ZZ_+$ the map
  \begin{equation*}
    L^2(\RR_+) \xrightarrow{\ \sim\ }
    L^2(S(\sigma,\mu),\td\nu_{\sigma,\mu}), \qquad
    u \mapsto
    g(\lambda)=\int_0^\infty{\eta_1(x,\lambda)u(x)\td x},
  \end{equation*}
  is a unitary isomorphism with inverse
  \begin{equation*}
    L^2(S(\sigma,\mu),\td\nu_{\sigma,\mu}) \xrightarrow{\ \sim\ }
    L^2(\RR_+), \qquad g \mapsto
    u(x)=\int_{S(\sigma,\mu)}{\eta_1(x,\lambda)
      g(\lambda)\td\nu_{\sigma,\mu}(\lambda)}.
  \end{equation*}
\end{thm}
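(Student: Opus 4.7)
My plan is to apply the classical Weyl--Titchmarsh--Kodaira inversion theorem for a formally self-adjoint second-order operator on a half-line and insert the explicit computations already carried out in Sections~3.1--3.5. Under the substitution $t=\sinh^2(x/2)$ and conjugation $u(t)=r(x)y(x)$ of Section~3.1, the eigenvalue problem for $\calD_{\sigma,\mu}$ on $L^2(\RR_+,t^{(\mu-2)/2}(1+t)^{-\Real\sigma/2}\td t)$ is unitarily equivalent to the Schr\"odinger-type problem $-y''+q(x)y=\lambda y$ on $L^2(\RR_+)$ with $q$ real-valued, so I would work in the Schr\"odinger model throughout and pull back the unitary isomorphism at the end.

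First I would record self-adjointness: the boundary condition \eqref{eq:BoundaryCondition} at $x=0$ (automatic in the LPT case $\mu\geq 4$ and imposed by fiat in the LCT cases $\mu=1,2,3$) together with LPT behaviour at $x=\infty$ produces a self-adjoint extension, placing us in the framework of Titchmarsh \cite[Section~3.1]{Tit46}. This gives, for $f$ in a dense subspace (say compactly supported smooth functions), the pointwise inversion formula
$$f(x)=-\frac{1}{\pi}\lim_{\delta\to 0^+}\int_{-\infty+i\delta}^{+\infty+i\delta}\Imaginary\Phi(x,\lambda)\td\lambda,$$
with $\Phi(x,\lambda)$ as defined in Section~3.3. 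I would split the contour into $(0,\infty)$ and $(-\infty,0)$ and substitute the explicit expressions already derived: on $(0,\infty)$, Section~3.4 shows uniformly for $\sigma>0$ and $\sigma\in i\RR$ (via Kummer's identity \eqref{eq:KummerFormula} to bridge the two cases) that $\Imaginary\Phi(x,\lambda)$ is a real multiple of $\eta_1(x,\lambda)\cdot\int_0^\infty\eta_1(y,\lambda)f(y)\td y$ with density precisely $-\frac{1}{4\sqrt\lambda}|\Gamma(-\tfrac{\sigma-\mu}{4}+i\sqrt\lambda)\Gamma(\tfrac{\sigma+\mu}{4}+i\sqrt\lambda)/(\Gamma(2i\sqrt\lambda)\Gamma(\mu/2))|^2$, i.e.\ the continuous part of $\td\nu_{\sigma,\mu}$; on $(-\infty,0)$, Section~3.5 shows that $\Imaginary\Phi$ vanishes off the simple poles of the resolvent kernel, so the contour limit reduces by the residue theorem to the finite sum over $\lambda_j=-(\tfrac{\sigma-\mu}{4}-j)^2$, $j\in\NN\cap[0,\tfrac{\sigma-\mu}{4})$, with the exact weights appearing in the discrete part of $\td\nu_{\sigma,\mu}$ (and vanishing identically for $\sigma\in i\RR$). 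Assembling the two contributions reproduces the inversion $f(x)=\int_{S(\sigma,\mu)}\eta_1(x,\lambda)g(\lambda)\td\nu_{\sigma,\mu}(\lambda)$ with $g(\lambda)=\int_0^\infty\eta_1(y,\lambda)f(y)\td y$.

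Promoting this pointwise inversion to the asserted unitary isomorphism is the standard Parseval step of Titchmarsh--Kodaira theory \cite[Section~3.7]{Tit46}: pairing the inversion formula with $f$ and applying Fubini yields the identity $\|f\|_{L^2(\RR_+)}^2=\|g\|^2_{L^2(S(\sigma,\mu),\td\nu_{\sigma,\mu})}$ on the dense subspace, and by density this extends both the forward map and its formal inverse to mutually inverse isometries on all of $L^2$. The main subtlety I foresee is the bookkeeping for the discrete contribution in the case $\sigma>0$: verifying that the poles of $1/W(\eta_1,\eta_2)(\lambda)$ on $(-\infty,0)$ arise solely from $\Gamma(-\tfrac{\sigma-\mu}{4}-i\sqrt\lambda)$ (the companion factor $\Gamma(\tfrac{\sigma+\mu}{4}-i\sqrt\lambda)$ being regular since its real part is at least $\mu/4>0$), that these poles are simple, and that their residues reassemble exactly into the point-mass weights prescribed in $\td\nu_{\sigma,\mu}$.
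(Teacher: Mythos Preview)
Your proposal is correct and follows exactly the paper's approach: the paper carries out the computations of Sections~3.1--3.5 and then simply invokes Titchmarsh \cite[Sections~3.1 \& 3.7]{Tit46} to conclude, which is precisely what you outline. One minor remark: the theorem is already stated in the Schr\"odinger model (the variable $x$, Lebesgue measure on $\RR_+$, eigenfunctions $\eta_1$), so no ``pulling back'' is needed here---that step belongs to Corollary~\ref{cor:HypergeomTrafo}, not to the present theorem.
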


For our application we need the spectral decomposition of the operator
$\calD_{\sigma,\mu}$ which follows from
Theorem~\ref{thm:DiffOpSpectralDecomposition} by the transformation
$f(t)\mapsto r(x)^{-1}f(\sinh^2(\frac{x}{2}))$. To state this put
\begin{equation*}
  T(\sigma,\mu) := i\RR_+ \cup
  \bigcup_{j\in[0,\frac{\Real\sigma-\mu}{4})\cap\ZZ}{\left\{\sigma-\mu-4j\right\}}
\end{equation*}
and define a measure $\di m_{\sigma,\mu}$ on $T(\sigma,\mu)$ by
\begin{multline}
  \int_{T(\sigma,\mu)}{g(\tau)\td m_{\sigma,\mu}(\tau)} :=
  \frac{1}{8\pi}\int_{i\RR_+}{g(\tau)\abs*{
      \frac{\Gamma(\frac{-\sigma+\mu+\tau}{4})
        \Gamma(\frac{\sigma+\mu+\tau}{4})}{\Gamma(\frac{\tau}{2})
        \Gamma(\frac{\mu}{2})} }^2\td\tau}
  \nonumber  \\
  +\sum_{j\in[0,\frac{\Real\sigma-\mu}{4})\cap\ZZ}{\frac{(\frac{\sigma-\mu}{2}-2j)\Gamma(\frac{\sigma}{2}-j)
      \Gamma(\frac{\mu}{2}+j)}{j!\Gamma(\frac{\mu}{2})^2
      \Gamma(\frac{\sigma-\mu}{2}-j+1)}g(\sigma-\mu-4j)}.
  \label{eq:DefMeasureMSigmaMu}
\end{multline}

\begin{cor}\label{cor:HypergeomTrafo}
  For $\sigma\in i\RR\cup(0,\infty)$ and $\mu\in\ZZ_+$ the map
  \begin{gather*}
    L^2(\RR_+,t^{\frac{\mu-2}{2}}(1+t)^{-\frac{\Real\sigma}{2}}\td t)
   \xrightarrow{\ \sim\ }
   L^2(T(\sigma,\mu),\td m_{\sigma,\mu}),
    \\
    f \mapsto
    g(\tau)=\int_0^\infty{F(t,\tau)f(t)
      t^{\frac{\mu-2}{2}}(1+t)^{-\frac{\sigma}{2}}\td  t}
  \end{gather*}
  is a unitary isomorphism with inverse
  \begin{gather*}
    L^2(T(\sigma,\mu),\td m_{\sigma,\mu}) \xrightarrow{\ \sim\ }
    L^2(\RR_+,t^{\frac{\mu-2}{2}}(1+t)^{-\frac{\Real\sigma}{2}}\td t),
    \\
    g \mapsto f(t)=\int_{T(\sigma,\mu)}{F(t,\tau)g(\tau)\td
      m_{\sigma,\mu}(\tau)}.
  \end{gather*}
\end{cor}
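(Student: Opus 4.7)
The plan is to deduce Corollary~\ref{cor:HypergeomTrafo} from Theorem~\ref{thm:DiffOpSpectralDecomposition} by reversing the change of variables that brought $\calD_{\sigma,\mu}$ into the normal form \eqref{eq:NormalizedDiffEq}, combined with the spectral reparametrization $\tau=4i\sqrt\lambda$. On the function side, I would first verify that $V: L^2(\RR_+,\td x)\to L^2(\RR_+, t^{(\mu-2)/2}(1+t)^{-\Real\sigma/2}\td t)$ defined by $y(x)\mapsto u(t)=r(x)y(x)$ (with $x$ determined by $t=\sinh^2(x/2)$) is a unitary isomorphism: this amounts to the pointwise identity $\abs{r(x)}^{-2}\td x=t^{(\mu-2)/2}(1+t)^{-\Real\sigma/2}\td t$, a direct computation from $\abs{r(x)}^{-2}=\sinh^{\mu-1}(x/2)\cosh^{-\Real\sigma+1}(x/2)$, $\td t=\sinh(x/2)\cosh(x/2)\td x$ and $1+t=\cosh^2(x/2)$.

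Next, I would identify the integral kernel. By construction $\eta_1(x,\lambda)=r(x)^{-1}\,{_2F_1}(a,b;c;-\sinh^2(x/2))$ with $a,b=-\frac{\sigma-\mu}{4}\pm i\sqrt\lambda$ and $c=\mu/2$; setting $\tau=4i\sqrt\lambda$ makes these parameters coincide with those defining $F(t,\tau)$ in \eqref{eq:DefF}. Composing the transform of Theorem~\ref{thm:DiffOpSpectralDecomposition} with $V^{-1}$ and substituting $t=\sinh^2(x/2)$ turns $\int_0^\infty\eta_1(x,\lambda)y(x)\td x$ into $\int_0^\infty r(x)^{-2}F(t,\tau)u(t)\td x=\int_0^\infty F(t,\tau)u(t)\,t^{(\mu-2)/2}(1+t)^{-\sigma/2}\td t$, which is exactly the transform in the corollary. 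The appearance of $\sigma$ (rather than $\Real\sigma$) in this kernel weight reflects that the substitution uses $r(x)^{-2}$ and not $\abs{r(x)}^{-2}$; this is consistent because $V$ is a sesquilinear isometry while the transform itself is bilinear in $u$.

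Finally, I would push forward $\td\nu_{\sigma,\mu}$ under $\lambda\mapsto\tau$. On the continuous piece, parametrizing $\tau=is$ with $s>0$ gives $\lambda=s^2/16$ and $\td\lambda/\sqrt\lambda=\td s/2$, turning the prefactor $1/(4\pi)$ into $1/(8\pi)$; simultaneously each Gamma factor with argument involving $i\sqrt\lambda$ becomes the corresponding factor in $\tau/4$, reproducing the continuous part of \eqref{eq:DefMeasureMSigmaMu}. On the discrete piece, the pole $\lambda_j=-((\sigma-\mu)/4-j)^2$ maps to $\tau_j=\sigma-\mu-4j>0$, and the residue weight depends only on $j,\sigma,\mu$ and so carries over verbatim. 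The only genuine subtlety I anticipate is keeping the sign/branch conventions consistent, both when selecting $\tau=+4i\sqrt\lambda$ on the continuous spectrum and when matching the discrete points $\tau_j$ to $\lambda_j$, so that $F(t,\tau_j)$ corresponds to $\eta_1(x,\lambda_j)$ under $V$ via the correct branch; once this is settled, the remainder is a mechanical change of variables.
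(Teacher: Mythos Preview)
Your proposal is correct and matches the paper's approach exactly: the paper states only that the corollary follows from Theorem~\ref{thm:DiffOpSpectralDecomposition} via the transformation $u(t)\mapsto r(x)^{-1}u(\sinh^2(\tfrac{x}{2}))$, and you have spelled out precisely those details---the unitarity of the change of variables, the identification of the kernel, and the pushforward of the spectral measure under $\lambda\mapsto\tau=4i\sqrt\lambda$. Your remark about $r(x)^{-2}$ versus $\abs{r(x)}^{-2}$ is also on point and explains the appearance of the complex exponent $-\sigma/2$ in the kernel weight.
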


\begin{rem}
  For the discrete part, namely for $\tau=\sigma-\mu-4j$, $j\in
  \NN$, the Gau\ss\ hypergeometric function $F(t,\tau)$ degenerates
  to a polynomial in $t$ of degree $j$. More precisely, we have (see
  \eqref{eq:HypergeometricPolynomials})
  \begin{equation*}
    F(t,\sigma-\mu-4j) =
    \frac{j!}{(\frac{\mu}{2})_n}P_j^{(\frac{\mu-2}{2},-\frac{\sigma}{2})}(1+2t),
  \end{equation*}
  where $P_n^{(\alpha,\beta)}(z)$ denote the Jacobi polynomials.
\end{rem}

\begin{rem}
  For $\sigma\in(0,\infty)$ the results of
  Corollary~\ref{cor:HypergeomTrafo} can also be found in
  \cite[formula (A.11)]{Fle77} where the hypergeometric transform
  appears (essentially) as the radial part of the spherical Fourier
  transform on $\SU(1,n)/\SU(n)$. Since our approach provides a
  unified treatment of both complementary series,
  discrete series representations for the hyperboloid and principal series,
  including the case $\sigma\in i\RR$, we gave a detailed proof in
  this section for convenience.
\end{rem}

\section{Decomposition of representations and the Plancherel
  formula}\label{sec:BranchingLawPlancherelFormula}

Using the spectral decomposition of $\calD_{\sigma,\mu}$ obtained in
Corollary~\ref{cor:HypergeomTrafo} we find in this section the
explicit Plancherel formula for the decomposition of
$\rho_{\sigma,\varepsilon}^G\restrictedto H$.

Let us first consider the action of $O(n-m)$ on
$L^2(\RR^n,\abs{ (x,y) }^{-\Real\sigma}\td x\td y)$ which gives the
following decomposition as $O(n-m)$-representations:
\begin{align}
  \MoveEqLeft   L^2(\RR^n,\abs{ (x,y) }^{-\Real\sigma}\td x\td y)
  \nonumber
  \\
  &=
  \sideset{}{^\oplus}\sum_{k=0}^\infty
  L^2(\RR^m\times\RR_+,(\abs{ x }^2+r^2)^{-\frac{\Real\sigma}{2}}r^{2k+n-m-1}\td
    x\td r)\boxtimes\calH^k(\RR^{n-m}),
  \label{eq:On-mDecomposition}
\end{align}
where $r=\abs{ y }$. We fix a summand for some $k\in\NN$ and put again
$\mu=2k+n-m$. The coordinate change $t:=\frac{r^2}{\abs{ x }^2}$ gives
\begin{align*}
  \MoveEqLeft
  L^2(\RR^m\times\RR_+,(\abs{ x }^2+r^2)^{-\frac{\Real\sigma}{2}}r^{\mu-1}\td
  x\td r)
  \\
  &=
  L^2(\RR^m\times\RR_+,\tfrac{1}{2}\abs{ x }^{-\Real\sigma+\mu}
  t^{\frac{\mu-2}{2}}(1+t)^{-\frac{\Real\sigma}{2}}\td
  x\td t).
\end{align*}
Since
\begin{align*}
  \MoveEqLeft
  L^2(\RR^m\times\RR_+,\tfrac{1}{2}\abs{ x }^{-\Real\sigma+\mu}
  t^{\frac{\mu-2}{2}}(1+t)^{-\frac{\Real\sigma}{2}}\td
  x\td t)
  \\
  & \cong L^2(\RR^m,\tfrac{1}{2}\abs{ x }^{-\Real\sigma+\mu}\td
  x)\widehat{\otimes}L^2(\RR_+,t^{\frac{\mu-2}{2}}(1+t)^{-\frac{\Real\sigma}{2}}\td
  t)
\end{align*}
we can apply Theorem~\ref{thm:DiffOpSpectralDecomposition} to find
that the map
\begin{align*}
  \MoveEqLeft
  L^2(\RR^m\times\RR_+,\tfrac{1}{2}\abs{ x }^{-\Real\sigma+\mu}
  t^{\frac{\mu-2}{2}}(1+t)^{-\frac{\Real\sigma}{2}}\td
  x\td t)
  \\
  &\to
  \int^\oplus_{T(\sigma,\mu)}{L^2(\RR^m,\tfrac{1}{2}\abs{ x }^{-\Real\tau}\td
    x)\td m_{\sigma,\mu}(\tau)}
\end{align*}
given by
\begin{align*}
  f(x,t)\mapsto\hat{f}(x,\tau) &:=
  \abs{ x }^{-\frac{\sigma-\tau-\mu}{2}}
  \int_0^\infty{F(t,\tau)f(x,t)t^{\frac{\mu-2}{2}}
    (1+t)^{-\frac{\sigma}{2}}\td t}
  \\
  \intertext{is a unitary isomorphism, where $F(t,\tau)$ is defined by
    \eqref{eq:DefF} and the measure $\di m_{\sigma,\mu}$ is given by
    \eqref{eq:DefMeasureMSigmaMu}. Its inverse is given by} g(x,\tau)
  \mapsto \check{g}(x,t) &:=
  \int_{T(\sigma,\mu)}{\abs{ x }^{\frac{\sigma-\tau-\mu}{2}}F(t,\tau)g(x,\tau)\td
    m_{\sigma,\mu}(\tau)}.
\end{align*}

Now we put these things together. For $\sigma\in i\RR\cup(0,\infty)$ and
$k\in\NN$ we put $\mu:=2k+n-m$ and define an operator
\begin{align*}
  \MoveEqLeft   \Psi(\sigma,k):
  \left(\int_{T(\sigma,\mu)}^\oplus{L^2(\RR^m,\tfrac{1}{2}\abs{ x }^{-\Real\tau}\td
      x)\td m_{\sigma,\mu}(\tau)}\right)\boxtimes\calH^k(\RR^{n-m})
  \\
  &\to L^2(\RR^n,\abs{ (x,y) }^{-\Real\sigma}\td x\td y)
\end{align*}
by
\begin{align*}
  \MoveEqLeft \Psi(\sigma,k)\left(f\otimes\phi\right)(x,y)
  \\
  &:=
  \phi(y)
  \int_{T(\sigma,\mu)}{\abs{ x }^{\frac{\sigma-\tau-\mu}{2}}{_2F_1}
    \left(\tfrac{\mu-\sigma+\tau}{4},\tfrac{\mu-\sigma-\tau}{4};
      \tfrac{\mu}{2};-\tfrac{\abs{ y }^2}{\abs{ x }^2}\right)f(x,\tau)\td
    m_{\sigma,\mu}(\tau)}.
\end{align*}

\begin{thm}\label{thm:HIntertwiner}
  For $\sigma\in i\RR\cup(0,n)\cup(n+2\NN)$ and $\varepsilon\in\ZZ/2\ZZ$ the map
  $\Psi(\sigma,k)$ is $H$-equivariant between the representations
  \begin{equation*}
    \int_{T(\sigma,\mu)}^\oplus{\rho_{\tau,\varepsilon+k}^{O(1,m+1)}\td
      m_{\sigma,\mu}(\tau)}\boxtimes\calH^k(\RR^{n-m})\to
    \rho_{\sigma,\varepsilon}^G \restrictedto[\big] H
  \end{equation*}
  and constructs the $\calH^k(\RR^{n-m})$-isotypic component in
  $\rho_{\sigma,\varepsilon}^G \restrictedto H$. The following Plancherel formula
  holds:
  \begin{align*}
    \MoveEqLeft \norm{ \Psi(\sigma,k)(f\otimes\phi) }_{L^2(\RR^n,\abs{ (x,y) }^{-\Real\sigma}\td
      x\td y)}^2
    \\
    &=
    \int_{T(\sigma,\mu)}{\norm{ f(\emptyarg,\tau) }_{L^2(\RR^m,\frac{1}{2}\abs{ x }^{-\Real\tau}\td
        x)}^2\td
      m_{\sigma,\mu}(\tau)}\cdot\norm{ \phi }_{L^2(S^{n-m-1})}^2.
  \end{align*}
\end{thm}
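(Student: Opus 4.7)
The strategy is to assemble the theorem from three ingredients already in place: the $O(n-m)$-isotypic decomposition \eqref{eq:On-mDecomposition}, the fiberwise intertwining map of Proposition~\ref{prop:IntertwiningProperty}, and the spectral decomposition of $\calD_{\sigma,\mu}$ in Corollary~\ref{cor:HypergeomTrafo}. The Plancherel formula comes from the third; the $H$-equivariance comes from the second; the first separates the problem into $k$-isotypic pieces.

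First I would fix $k\in\NN$, set $\mu=2k+n-m$, and restrict to the $k$-th summand of \eqref{eq:On-mDecomposition}. Introducing the coordinate $t=\abs{y}^2/\abs{x}^2$ identifies this summand with $L^2(\RR^m,\tfrac12\abs{x}^{-\Real\sigma+\mu}\td x)\,\widehat\otimes\,L^2(\RR_+,t^{(\mu-2)/2}(1+t)^{-\Real\sigma/2}\td t)\boxtimes\calH^k(\RR^{n-m})$. Applying Corollary~\ref{cor:HypergeomTrafo} to the second tensor factor produces a unitary isomorphism onto $\int^\oplus_{T(\sigma,\mu)}L^2(\RR^m,\tfrac12\abs{x}^{-\Real\sigma+\mu}\td x)\td m_{\sigma,\mu}(\tau)\boxtimes\calH^k(\RR^{n-m})$, in which the kernel is $F(t,\tau)$. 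The pointwise multiplication by $\abs{x}^{(\sigma-\tau-\mu)/2}$ is an isometry from $L^2(\RR^m,\tfrac12\abs{x}^{-\Real\sigma+\mu}\td x)$ onto $L^2(\RR^m,\tfrac12\abs{x}^{-\Real\tau}\td x)$, and inserting it under the integral is precisely the adjoint of $\Psi(\sigma,k)$. The composition of these unitary equivalences yields both the Plancherel formula and the fact that $\Psi(\sigma,k)$ lands exactly in the $\calH^k(\RR^{n-m})$-isotypic component.

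For the $H$-equivariance, the key observation is that for each $\tau\in T(\sigma,\mu)$ the kernel $F(t,\tau)={_2F_1}(\tfrac{\mu-\sigma+\tau}{4},\tfrac{\mu-\sigma-\tau}{4};\tfrac{\mu}{2};-t)$ is the regular-at-$0$ solution of the ODE appearing in Proposition~\ref{prop:IntertwiningProperty} (with $F(0,\tau)=1$). Hence that proposition shows that the fiber map $f\otimes\phi\mapsto\abs{x}^{(\sigma-\tau-\mu)/2}F(\abs{y}^2/\abs{x}^2,\tau)f(x)\phi(y)$ intertwines $\rho^{O(1,m+1)}_{\tau,\varepsilon+k}\boxtimes\calH^k(\RR^{n-m})$ with $\rho^G_{\sigma,\varepsilon}\restrictedto H$ as representations of $\overline{P}_H$ and of $\frakh$. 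Integrating against $\td m_{\sigma,\mu}(\tau)$ carries these intertwining properties over to $\Psi(\sigma,k)$, because the $\overline{P}_H$-action consists of multiplication, dilation, and $O(n)$-rotation operators in the $\RR^n$-variable and so commutes with the spectral integration in $\tau$. Passing from $\overline{P}_H$- and $\frakh$-equivariance to full $H$-equivariance is automatic: the identity component of $H$ is obtained by exponentiating $\frakh$, while the remaining three components of $O(1,m+1)\times O(n-m)$ are already represented inside $M_H\subseteq\overline{P}_H$.

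The main obstacle I anticipate is justifying the interchange of the Lie algebra action --- in particular of the second-order Bessel operators $\calB^{n,\sigma}_j$ from \eqref{eq:FPAlgebraAction4} --- with the spectral integration over $\tau\in T(\sigma,\mu)$, which has both continuous and discrete parts. Proposition~\ref{prop:IntertwiningProperty} only provides a pointwise identity before integration, and making the $L^2$-statement rigorous requires either a density argument on the dense subspace of smooth, compactly supported radial test data (where Fubini and dominated convergence apply directly) or, more conceptually, an appeal to the irreducibility of each $\rho^{O(1,m+1)}_{\tau,\varepsilon+k}$ appearing in the support of $\td m_{\sigma,\mu}$, which by Schur forces the $H$-intertwiner to agree fiberwise with the kernel of Proposition~\ref{prop:IntertwiningProperty} up to a scalar. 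Either route completes the proof.
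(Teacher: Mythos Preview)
Your plan for the Plancherel formula and for $\overline{P}_H$-equivariance is correct and matches the paper. The gap is in your claim that ``passing from $\overline{P}_H$- and $\frakh$-equivariance to full $H$-equivariance is automatic: the identity component of $H$ is obtained by exponentiating $\frakh$.'' This is exactly the step that is \emph{not} automatic. Proposition~\ref{prop:IntertwiningProperty} gives $\frakh$-equivariance only on $C^\infty(\RR^m\minuszero)\boxtimes\calH^k(\RR^{n-m})$, and the $\frakn_H$-action is via the unbounded second-order Bessel operators $\calB_j^{n,\sigma}$. A bounded operator between Hilbert spaces that intertwines a Lie algebra action on some dense domain does not, in general, intertwine the exponentiated group action; one needs control such as analyticity of the vectors involved. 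Neither of your two proposed fixes closes this: Fubini/dominated convergence on compactly supported test data would give a Lie-algebra identity, not a group identity, and the Schur argument is circular since it presupposes an $H$-intertwiner.

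The paper confronts this head-on. It isolates the missing piece as $N_H$-equivariance (since $H$ is generated by $\overline{P}_H$ and $N_H$) and proves a general lemma (Lemma~\ref{lem:GroupIntertwining}) that upgrades a weak Lie-algebra intertwining identity $\Hermit{\varphi(\di\rho_1(X)v_1)}{v_2}=-\Hermit{\varphi(v_1)}{\di\rho_2(X)v_2}$ to group equivariance, provided the identity holds on dense subspaces $V_1,V_2$ of \emph{analytic} vectors that are stable under the Lie algebra. Most of the work is then constructing such $V_1,V_2$ (using $K$-finite vectors on the $G$-side and, on the direct-integral side, vectors built from the spherical vectors $\psi_\tau^{O(1,m+1)}$ with compactly supported coefficients in $\tau$), proving analyticity for the continuous spectrum (a separate lemma), and verifying the weak intertwining condition via Proposition~\ref{prop:EmbeddingIntoHCDual}, which requires delicate absolute-convergence estimates and an integration-by-parts argument for $\calB_j^{n,\sigma}$ with explicit control of the boundary terms on the shrinking domains $\Omega_{j,\varepsilon}=\{\abs{x_j}>\varepsilon\}$.
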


\begin{proof}
  We have already seen that $\Psi(\sigma,k)$ gives a unitary
  isomorphism so that the Plancherel formula above holds. Further, by
  Proposition~\ref{prop:IntertwiningProperty} the map $\Psi(\sigma,k)$
  intertwines the actions of $M_HA\overline{N}_H$ on smooth vectors
  and hence on the Hilbert spaces. Since $H$ is generated by
  $M_HA\overline{N}_H$ and $N_H$ it remains to prove the intertwining
  property for $N_H$. For this we use the Lie algebra action.

\begin{lem}\label{lem:GroupIntertwining}
  Let $L$ be a connected Lie group with Lie algebra $\frakl$ and let
  $(\rho_1, \calH_1)$ and $(\rho_2, \calH_2)$ be unitary
  representations of $L$. Suppose that a continuous linear map
  $\varphi:\calH_1\to \calH_2$ is given and there exist subspaces
  $V_1\subset \calH_1$ and $V_2\subset \calH_2$ such that
  \begin{enumerate}
  \item\label{item:lem:GroupIntertwining:i} $V_i$ is dense in
    $\calH_i$ for $i=1,2$,
  \item\label{item:lem:GroupIntertwining:ii} $V_i$ is contained in the
    space of analytic vectors $\calH_i^\omega$ for $i=1,2$,
  \item\label{item:lem:GroupIntertwining:iii} $V_i$ is
    $\di\rho_i$-stable for $i=1,2$,
  \item\label{item:lem:GroupIntertwining:iv}
    $\Hermit{\varphi(\mathrm{d}\rho_1(X)v_1)}{v_2}_{\calH_2}
    =-\Hermit{\varphi(v_1)}{\mathrm{d}\rho_2(X)v_2}_{\calH_2}$
    for $v_1\in V_1$, $v_2\in V_2$ and $X\in\frakl$.
  \end{enumerate}
  Then $\varphi$ is $L$-equivariant.
\end{lem}

\begin{proof}
  For $v_1\in V_1$ and $v_2\in V_2$ we put
  \begin{alignat*}{2}
    f_{v_1,v_2}(g) &:= \Hermit{\varphi(\rho_1(g)v_1)}{v_2}_{\calH_2},
    &\qquad g&\in L,
    \\
    h_{v_1,v_2}(g) &:= \Hermit{\rho_2(g)\varphi(v_1)}{v_2}_{\calH_2} =
    \Hermit{\varphi(v_1)}{\rho_2(g^{-1})v_2}_{\calH_2}, & g&\in L,
  \end{alignat*}
  which are analytic functions on $L$ by
  \ref{item:lem:GroupIntertwining:ii}. For a smooth function $f$ on
  $L$ and $X\in\frakl$ we define derivatives by
  \begin{equation*}
    (R(X)f)(g) := \lim_{t\to 0} \frac{f(ge^{tX})-f(g)}{t}, \qquad
    (L(X)f)(g) := \lim_{t\to 0} \frac{f(e^{-tX}g)-f(g)}{t}.
  \end{equation*}
  We have $R(X)f(e)=-L(X)f(e)$ for the identity element $e\in L$ and
  $R(X)$ commutes with $L(X')$ for any $X, X'\in\frakl$. Hence
  \begin{align*}
    R(X_1)R(X_2)\cdots R(X_k)f(e) ={}& -L(X_1)R(X_2)\cdots R(X_k)f(e)
    \\
    ={}& -R(X_2)\cdots R(X_k)L(X_1)f(e)
    \\
    & \mathmakebox[5cm]\vdots
    \\
    ={}& (-1)^kL(X_k)\cdots L(X_2)L(X_1)f(e)
  \end{align*}
  for $X_1,\dots,X_k\in \frakl$. Then
  \ref{item:lem:GroupIntertwining:iv} implies
  \begin{align*}
    R(X_1)\cdots R(X_k)f_{v_1,v_2}(e) &=
    f_{\mathrm{d}\rho_1(X_1)\cdots\mathrm{d}\rho_1(X_k)v_1,v_2}(e)
    \\
    &= (-1)^k h_{v_1,
      \mathrm{d}\rho_2(X_k)\cdots\mathrm{d}\rho_2(X_1)v_2}(e)
    \\
    &= (-1)^k L(X_k)\cdots L(X_1)h_{v_1,v_2}(e)
    \\
    &= R(X_1)\cdots R(X_k)h_{v_1,v_2}(e).
  \end{align*}
  Since $f_{v_1,v_2}$ and $h_{v_1,v_2}$ are analytic functions, they
  coincide. Therefore $\varphi(\rho_1(g)v_1)= \rho_2(g)\varphi(v_1)$
  for $v_1 \in V_1$ and hence $\varphi(\rho_1(g)v)=
  \rho_2(g)\varphi(v)$ for any $v\in\calH_1$
  by~\ref{item:lem:GroupIntertwining:i}.
\end{proof}

We apply the lemma to the map
$\varphi=\Psi(\sigma,k):\calH_1\to\calH_2$ where
\begin{align*}
  \calH_1 &:=
  \left(\int_{T(\sigma,\mu)}^\oplus{L^2(\RR^m,\tfrac{1}{2}\abs{ x }^{-\Real\tau}\td
      x)\td m_{\sigma,\mu}(\tau)}\right)\boxtimes\calH^k(\RR^{n-m}),
  \\
  \calH_2 &:= L^2(\RR^n,\abs{ (x,y) }^{-\Real\sigma}\td x\td y).
\end{align*}
So let $\rho_1$ and $\rho_2$ be the restrictions of
\begin{equation*}
  \left(\int_{T(\sigma,\mu)}^\oplus{\rho_{\tau,\varepsilon+k}^{O(1,m+1)}\td
      m_{\sigma,\mu}(\tau)}\right) \boxtimes \1 \qtextq{and} 
  \rho_{\sigma,\varepsilon}^G
\end{equation*}
to $L=N_H$, respectively. To define $V_1$, we regard an element
\begin{equation*}
  f \in
  \int_{T(\sigma,\mu)}^\oplus{L^2(\RR^m,\tfrac{1}{2}\abs{ x }^{-\Real\tau}\td
    x)\td m_{\sigma,\mu}(\tau)}
\end{equation*}
as a function $f(x,\tau)$ on $(\RR^m\minuszero )\times T(\sigma,\mu)$.
Let $V_{1,c}$ be the space consisting of linear combinations of the
functions on $(\RR^m\minuszero )\times i\RR_+\times\RR^{n-m}$
of the form
\begin{equation*}
  (x,\tau,y) \mapsto (\di\rho_\tau^{O(1,m+1)}(X)\psi_\tau^{O(1,m+1)})(x)\phi(y)\chi(\tau),
\end{equation*}
where $X\in\calU(\frakh)$, $\psi_\tau^{O(1,m+1)}$ is the spherical
vector of $\rho_{\tau,\varepsilon+k}^{O(1,m+1)}$ as defined in
\eqref{eq:L2SphericalVector}, $\phi\in\calH^k(\RR^{n-m})$ and $\chi\in
C_c(i\RR_+)$, i.e.\ $\chi$ is a continuous function on
$i\RR_+$ with compact support. 
Let $V_{1,d}$ be the space consisting of sum of functions on
 $(\RR^m\minuszero )\times (T(\sigma,\mu)\cap (0,\infty))\times\RR^{n-m}$
 of the form 
\begin{equation*}
  (x,\tau,y) \mapsto f_{\tau}(x)\phi(y),
\end{equation*}
where $f_{\tau}\in L^2(\RR^m,\abs{ x }^{-\Real\tau}\td x)_{K\cap O(1,m+1)}$, a $(K\cap O(1,m+1))$-finite vector in $\rho_{\tau,\varepsilon+k}^{O(1,m+1)}$, and $\phi\in\calH^k(\RR^{n-m})$.
Then we put $V_1:=V_{1,c}\oplus V_{1,d}$.
Further let $V_2$ be the space
of all $K$-finite vectors in $L^2(\RR^n,\allowbreak \abs{ (x,y) }^{-\Real\sigma}\td x\td
y)$. We now check conditions
\ref{item:lem:GroupIntertwining:i}--\ref{item:lem:GroupIntertwining:iv}:
\begin{enumerate}
\item $V_1$ is dense in $\calH_1$ since $C_c(i\RR_+)$ is dense
  in $L^2(i\RR_+,\td m_{\sigma,\mu})$ and the space of $(K\cap
  O(1,m+1))$-finite vectors for $\rho_\tau^{O(1,m+1)}$ is generated by
  $\psi_\tau^{O(1,m+1)}(x)$ and dense in
  $L^2(\RR^m,\abs{ x }^{-\Real\tau}\td x)$ for $\tau\in i\RR_+$.
 The space $V_2$ is dense in
  $\calH_2$ since it is the space of $K$-finite vectors for
  $\rho_{\sigma,\varepsilon}^{O(1,n+1)}$.
\item $K$-finite vectors are analytic vectors for $G$ and in
  particular for $N_H\subseteq G$, hence
  $V_2\subseteq\calH_2^\omega$. Similarly, $V_{1,d}\subseteq\calH_1^\omega$.
The inclusion
  $V_{1,c}\subseteq\calH_1^\omega$ follows from the lemma below.
\item It is clear that $V_2$ is $\di\rho_2$-stable since the space of
  $K$-finite vectors is $\di\rho_\sigma^{O(1,n+1)}$-stable. That $V_1$
  is $\di\rho_1$-stable follows from the definition of $V_1$.
\end{enumerate}

\begin{lem}
  Let
  \begin{equation*}
    (\rho_1', \calH_1') :=
    \left(\int_{T(\sigma,\mu)}^\oplus{\rho_{\tau,\varepsilon+k}^{O(1,m+1)}\td
        m_{\sigma,\mu}(\tau)},
      \int_{T(\sigma,\mu)}^\oplus{L^2(\RR^m,\tfrac{1}{2}\abs{ x }^{-\Real\tau}\td
        x)\td m_{\sigma,\mu}(\tau)}\right).
  \end{equation*}
  A function $f(x,\tau)$ on $(\RR^m\minuszero ) \times i\RR_+$
  of the form
  \begin{equation*}
    f(x,\tau) :=
    (\di\rho_\tau^{O(1,m+1)}(X)\psi_\tau^{O(1,m+1)})(x)\chi(\tau)
  \end{equation*}
  for $X\in\calU(\frakh)$ and $\chi\in C_c(i\RR_+)$ is an
  analytic vector of $\rho_1'$.
\end{lem}

\begin{proof}
  It is enough to prove that for any $g_0\in
  O(1,m+1)$ there exists a neighborhood $0\in U \subset \so(1,m+1)$
  such that
  \begin{equation*}
    a_N := \norm[\Big]{ \rho_1'(\exp Y)\rho_1'(g_0)f(x,\tau)-\sum_{l=0}^N
      \frac{1}{l!}\di\rho_1'(Y)^l
      \rho_1'(g_0)f(x,\tau) }_{\calH_1'}^2 \to 0
  \end{equation*}
  as $N\to\infty$ for $Y\in U$.  Consider the Euclidean Fourier transform
  $\calF_{\RR^m}$ with respect to the variable $x$ (see
  \eqref{eq:DefFourierTransform}) which gives a unitary equivalence
  between
  \begin{equation*}
    \rho_1' =
    \int_{T(\sigma,\mu)}^\oplus{\rho_{\tau,\varepsilon+k}^{O(1,m+1)}\td
      m_{\sigma,\mu}(\tau)}  \qtextq{and}  \pi_1 :=
    \int_{T(\sigma,\mu)}^\oplus{\pi_{\tau,\varepsilon+k}^{O(1,m+1)}\td
      m_{\sigma,\mu}(\tau)}.
  \end{equation*}
  Put
  $h(x,\tau):=\calF_{\RR^m}(\di\rho_\tau^{O(1,m+1)}(X)\psi_\tau^{O(1,m+1)})(x)$
  then
  \begin{align*}
    \MoveEqLeft a_N = \int_{i\RR_+}\Bigl\lVert\pi_1(\exp
    Y)\pi_1(g_0)h(x,\tau)
    \\
    &\qquad-\sum_{l=0}^N \frac{1}{l!}\di\pi_1(Y)^l
    \pi_1(g_0)h(x,\tau)\Bigr\rVert_{L^2(\RR^m,\frac{1}{2}\abs{ x }^{-\Real\tau}\td
      x)}^2 \abs{ \chi(\tau) }^2\td m_{\sigma,\mu}(\tau).
  \end{align*}
  As in Section~\ref{sec:PrincipalSeriesReps} the function $h(x,\tau)$
  corresponds to a function $\tilde{h}(g,\tau)$ on $O(1,m+\nobreak 1)\times
  i\RR_+$ satisfying
  $\tilde{h}(gman,\tau)=\xi_{\varepsilon+k}(m)^{-1}a^{-\tau-\rho}\tilde{h}(g,\tau)$
  for $m\in O(1,m+1)\cap M$, $a\in A$ and $n\in N_H$.  Consequently,
  $a_N$ is given as
  \begin{align*}
    \MoveEqLeft \int_{i\RR_+}\Bigl(\int_{O(1)\times O(m+1)}
    \Bigl\lvert\pi_{\tau,\varepsilon+k}^{O(1,m+1)}(g_0)\tilde{h}(\exp (-Y)
    k,\tau)
    \\
    &-\sum_{l=0}^N \frac{1}{l!}\di\pi_\tau^{O(1,m+1)}(Y)^l
    \pi_{\tau,\varepsilon+k}^{O(1,m+1)}(g_0)\tilde{h}(k,\tau)\Bigr\rvert^2
    \td k\Bigr) \abs{ \chi(\tau) }^2\td m_{\sigma,\mu}(\tau)
  \end{align*}
  up to a constant factor, where $\di k$ is the Haar measure on
  $O(1)\times O(m+1)$. Since
  $\pi_{\tau,\varepsilon+k}^{O(1,m+1)}(g_0)\tilde{h}$ is analytic on
  $O(1,m+1)\times i\RR_+$, the sequence
  \begin{equation*}
    \sum_{l=0}^N \frac{1}{l!}\di\pi_\tau^{O(1,m+1)}(Y)^l
    \pi_{\tau,\varepsilon+k}^{O(1,m+1)}(g_0)\tilde{h}(k,\tau)
  \end{equation*}
  converges uniformly to
  $\pi_{\tau,\varepsilon+k}^{O(1,m+1)}(g_0)\tilde{h}(\exp (-Y)
  k,\tau)$ on the compact set $(k,\tau)\in (O(1)\times O(m+1))\times
  \supp \chi$, which proves $a_N\to 0$.
\end{proof}

To verify the intertwining condition
\ref{item:lem:GroupIntertwining:iv} we first prove the intertwining
property for each single space $L^2(\RR^m,\abs{ x }^{-\Real\tau}\td x)$ for
fixed $\tau$ by embedding it into the $\CC$-antilinear algebraic dual
of the Harish-Chandra module $L^2(\RR^n,\abs{ (x,y) }^{-\Real\sigma}\td x\td
y)_K$ of $K$-finite vectors. For $\tau\in i\RR_+$ and
$X\in\calU(\frakh)$ let
\begin{equation*}
  f_{\tau,X}(x) :=
  (\di\rho_\tau^{O(1,m+1)}(X)\psi_\tau^{O(1,m+1)})(x), 
  \qqtext{$x\in\RR^m\minuszero $}.
\end{equation*}

\begin{prop}\label{prop:EmbeddingIntoHCDual}
  Let $\phi\in\calH^k(\RR^{n-m})$ and $g\in
  L^2(\RR^n,\abs{ (x,y) }^{-\Real\sigma}\td x\td y)_K$.
  \begin{enumerate}
  \item\label{item:prop:EmbeddingIntoHCDual:i} For every $X\in\calU(\frakh)$
  and $\tau\in i\RR_+$ the integral
    \begin{equation*}
      \int_{\RR^n}{\abs{ x }^{\frac{\sigma-\tau-\mu}{2}}
        F(\tfrac{\abs{ y }^2}{\abs{ x }^2},\tau)f_{\tau,X}(x)\phi(y)
        \overline{g(x,y)}\abs{ (x,y) }^{-\Real\sigma}\td  x\td y}
    \end{equation*}
    converges absolutely and defines a continuous function in $\tau$.
  \item\label{item:prop:EmbeddingIntoHCDual:ii} For every $X\in\calU(\frakh)$,
  $\tau\in i\RR_+$ and $j=1,\ldots,m$ we have
    \begin{align}
      \MoveEqLeft[1] \int_{\RR^n}{\abs{ x }^{\frac{\sigma-\tau-\mu}{2}}
        F(\tfrac{\abs{ y }^2}{\abs{ x }^2},\tau)(\calB_j^{m,\tau}
        f_{\tau,X})(x)\phi(y)\overline{g(x,y)}\abs{ (x,y) }^{-\Real\sigma}\td
        x\td y}
      \nonumber
      \\
      &=
      \int_{\RR^n}{\abs{ x }^{\frac{\sigma-\tau-\mu}{2}}
        F(\tfrac{\abs{ y }^2}{\abs{ x }^2},\tau)f_{\tau,X}(x)\phi(y)
        \overline{(\calB_j^{n,\sigma}g)(x,y)}
        \abs{ (x,y) }^{-\Real\sigma}\td x\td y}.
      \label{eq:IntertwiningPropertyHCDual}
    \end{align}
  \item\label{item:prop:EmbeddingIntoHCDual:iii} 
 For $\tau\in T(\sigma,\mu)\cap (0,\infty)$ and 
 $f_{\tau}\in L^2(\RR^m,\abs{ x }^{-\Real\tau}\td x)_{K\cap O(1,m+1)}$,
 the integral
    \begin{equation*}
      \int_{\RR^n}{\abs{ x }^{\frac{\sigma-\tau-\mu}{2}}
        F(\tfrac{\abs{ y }^2}{\abs{ x }^2},\tau)f_{\tau}(x)\phi(y)
        \overline{g(x,y)}\abs{ (x,y) }^{-\Real\sigma}\td  x\td y}
    \end{equation*}
    converges absolutely.
  \item\label{item:prop:EmbeddingIntoHCDual:iv}
  For $\tau\in T(\sigma,\mu)\cap (0,\infty)$,
  $f_{\tau}\in L^2(\RR^m,\abs{ x }^{-\Real\tau}\td x)_{K\cap O(1,m+1)}$,
  and $j=1,\ldots,m$, 
  we have
    \begin{align}
      \MoveEqLeft[1] \int_{\RR^n}{\abs{ x }^{\frac{\sigma-\tau-\mu}{2}}
        F(\tfrac{\abs{ y }^2}{\abs{ x }^2},\tau)(\calB_j^{m,\tau}
        f_{\tau})(x)\phi(y)\overline{g(x,y)}\abs{ (x,y) }^{-\Real\sigma}\td
        x\td y}
      \nonumber
      \\
      &=
      \int_{\RR^n}{\abs{ x }^{\frac{\sigma-\tau-\mu}{2}}
        F(\tfrac{\abs{ y }^2}{\abs{ x }^2},\tau)f_{\tau}(x)\phi(y)
        \overline{(\calB_j^{n,\sigma}g)(x,y)}
        \abs{ (x,y) }^{-\Real\sigma}\td x\td y}.
      \label{eq:IntertwiningPropertyHCDualDiscrete}
    \end{align}
  \end{enumerate}
\end{prop}

\begin{proof}
  We first note that by \eqref{eq:gKmodule} the function
  $f_{\tau,X}(x)$ is a linear combination of functions of the form
  \begin{equation*}
    f(x) = \widetilde{K}_{-\frac{\tau}{2}+a}(\abs{ x })\abs{ x }^{2a}p(x)
  \end{equation*}
  for $a\in\NN$ and $p\in\CC[x]$ with coefficients depending
  smoothly on $\tau$.
 Similarly, by \eqref{eq:gKmodule} and \eqref{eq:gKmoduleSub},
 $f_{\tau}(x)$ is a linear combination of the form $f(x)$ above.
 Note that in the case $\tau=m+2v\in m+2\NN$ we additionally
  have $p\in\CC[x]_{>v-a}$. 
 We may replace $f_{\tau,X}(x)$ and $f_{\tau}(x)$ by one of
 these functions $f(x)$. For the same reason we may assume that
  \begin{equation*}
    g(x,y) =
    \widetilde{K}_{-\frac{\sigma}{2}+b}(\abs{ (x,y) })\abs{ (x,y) }^{2b}q(x,y)
  \end{equation*}
  for some $b\in\NN$ and $q\in\CC[x,y]$ where for $\sigma=n+2u\in n+2\NN$ we
  additionally have $q\in\CC[x,y]_{>u-b}$.
  \begin{enumerate}[labelindent=0pt]
  \item[(i)\&(iii)] \label{item:prop:EmbeddingIntoHCDual:proof:i} By
    \eqref{eq:KBesselAsymptotics0} and
    \eqref{eq:KBesselAsymptoticsInfty} there exists a continuous
    function $C_1(\tau)>0$ on $T(\sigma,\mu)$ and $N_1>0$ such that for $x\neq0$:
    \begin{multline*}
      \abs{ \widetilde{K}_{-\frac{\tau}{2}+a}(\abs{x})\abs{x}^{2a}p(x) }\\
      \leq
      C_1(\tau)|x|^{-\delta_1}(1+|x|)^{N_1}e^{-|x|}
      \begin{cases}
        1&\mbox{for $0\leq\Real\tau<m$,}\\
        \abs{x}^{\frac{\Real\tau-m}{2}+1}&\mbox{for $\Real\tau\geq m$,}
      \end{cases}
    \end{multline*}
    for some arbitrarily small $\delta_1>0$ (covering the possible
    $\log$-term for $\tau=2a$).
    For the hypergeometric function we have by \eqref{eq:2F1Identity1}
    and \eqref{eq:HypergeometricPolynomials} (checking the cases
    $\tau\in i\RR_+$ and $\tau\in(\Real\sigma-\mu-4\NN)\cap\RR_+$
    separately)
    \begin{equation*}
      \abs{ F(t,\tau) } \leq
      C_2(\tau)(1+t)^{\frac{\Real\sigma-\Real\tau-\mu}{4}}, \qqtext{$ t>0$},
    \end{equation*}
    for some continuous function $C_2(\tau)>0$ on $T(\sigma,\mu)$. We
    estimate
    $$ \abs{ \phi(y) } \leq C_3\abs{ y }^k \leq C_3\abs{ (x,y) }^k. $$
    Further, for the $K$-Bessel function of parameter
    $-\frac{\sigma}{2}+b$ we again find by \eqref{eq:KBesselAsymptotics0}
    and \eqref{eq:KBesselAsymptoticsInfty} that for $(x,y)\neq0$:
    \begin{multline*}
      \abs{ \widetilde{K}_{-\frac{\sigma}{2}+b}(\abs{(x,y)})\abs{(x,y)}^{2b}q(x,y) }\\
      \leq
      C_4\abs{(x,y)}^{-\delta_2}(1+\abs{(x,y)})^{N_2}e^{-\abs{(x,y)}}
      \begin{cases}
        1&\mbox{for $0\leq\Real\sigma<n$,}\\
        \abs{(x,y)}^{\frac{\Real\sigma-n}{2}+1}&\mbox{for $\Real\sigma\geq n$,}
      \end{cases}
    \end{multline*}
    for some arbitrarily small $\delta_2>0$ (covering the possible
    $\log$-term for $\sigma=2b$) and $C_4,N_2>0$. Now assume $0\leq\Real\tau<m$ and
    $0\leq\Real\sigma<n$ then we obtain
    \begin{align*}
      \MoveEqLeft[3]
      \abs*{ \abs{ x }^{\frac{\sigma-\tau-\mu}{2}}F(\tfrac{\abs{ y }^2}{\abs{ x }^2},\tau)
        f(x)\phi(y)\overline{g(x,y)} }
      \\
      \leq{}&
      C_1(\tau)C_2(\tau)C_3C_4\abs{ x }^{\frac{\Real\sigma-\Real\tau-\mu}{2}}
      \left(1+\tfrac{\abs{ y }^2}{\abs{ x }^2}\right)^{\frac{\Real\sigma-\Real\tau-\mu}{4}}
      \\
      &
      \times\abs{x}^{-\delta_1}(1+\abs{ x })^{N_1}e^{-\abs{ x }}\abs{ (x,y)
      }^{k-\delta_2}
      (1+\abs{ (x,y) })^{N_2}e^{-\abs{ (x,y) }}
      \\
      \leq{}&
      C(\tau)\abs{x}^{-\delta_1}\abs{ (x,y)
      }^{\frac{\Real\sigma-n+m-\Real\tau}{2}-\delta_2}
      (1+\abs{ (x,y) })^Ne^{-\abs{ (x,y) }}
    \end{align*}
    with $C(\tau)=C_1(\tau)C_2(\tau)C_3C_4$ and
    $N=N_1+N_2$. Since $\delta_1$ and $\delta_2$ can be chosen arbitrarily
    small the right hand side is integrable on $\RR^n$ with respect to
    the measure $\abs{ (x,y) }^{-\Real\sigma}$ if and only if
    \begin{equation*}
      \frac{-\Real\sigma-n+m-\Real\tau}{2} > -n.
    \end{equation*}
    This inequality holds by assumption and hence the integral converges absolutely. Moreover, we even have
    $n-\Real\sigma+m-\Real\tau>n-\Real\sigma>0$ for all $\tau$ and
    hence the convergence is uniformly in $\tau$ varying in a compact
    subset of $T(\sigma,\mu)$. The other two possibilities $0\leq\Real\tau<m$,
    $\Real\sigma\geq n$ and $\Real\tau\geq m$, $\Real\sigma\geq n$ are treated
    similarly which finishes the proof of~\ref{item:prop:EmbeddingIntoHCDual:i}
 \& \ref{item:prop:EmbeddingIntoHCDual:iii}.
  \item[(ii)\&(iv)]
 First recall from Proposition~\ref{prop:IntertwiningProperty}
    that
    \begin{equation*}
       \abs{ x }^{\frac{\sigma-\tau-\mu}{2}}
      F(\tfrac{\abs{ y }^2}{\abs{ x }^2},\tau)(\calB_j^{m,\tau}f)(x)\phi(y)
      = \calB_j^{n,\sigma}\left[\abs{ x }^{\frac{\sigma-\tau-\mu}{2}}
        F(\tfrac{\abs{ y }^2}{\abs{ x }^2},\tau)f(x)\phi(y)\right].
    \end{equation*}
    Therefore we have to show that
    \begin{align}
      \MoveEqLeft
      \int_{\RR^n}{\calB_j^{n,\sigma}\Phi(x,y)\cdot\overline{g(x,y)}
        \cdot\abs{ (x,y) }^{-\Real\sigma}\td x\td y}
      \nonumber
      \\
      &\stackrel{!}{=}
      \int_{\RR^n}{\Phi(x,y)\cdot\overline{\calB_j^{n,\sigma}g(x,y)}
        \cdot\abs{ (x,y) }^{-\Real\sigma}\td  x\td y},
      \label{eq:BesselSAOnKfinite1}
    \end{align}
    where we abbreviate
    \begin{equation*}
      \Phi(x,y) =
      \abs{ x }^{\frac{\sigma-\tau-\mu}{2}}F(\tfrac{\abs{ y }^2}{\abs{ x }^2},\tau)f(x)\phi(y).
    \end{equation*}
    The operator $\calB_j^{n,\sigma}$ is formally self-adjoint with
    respect to $\abs{ (x,y) }^{-\Real\sigma}$ since
    $\di\rho_\sigma^G(N_j)=-i\calB_j^{n,\sigma}$ is, as part of the
    Lie algebra action, formally skew-adjoint on
    $C_c^\infty(\RR^n\minuszero )\subseteq
    L^2(\RR^n,\abs{ (x,y) }^{-\Real\sigma}\td x\td y)^\infty$. Therefore it
    remains to show that we can integrate by parts without leaving any
    boundary terms. Fix $j\in\{1,\ldots,m\}$ and consider the domain
    \begin{equation*}
      \Omega_{j,\varepsilon} := \Set{ (x,y)\in\RR^n }{
        \abs{ x_j }>\varepsilon } \subseteq \RR^n
    \end{equation*}
    for $\varepsilon>0$. Clearly
    $\RR^n\setminus\bigcup_{\varepsilon>0}{\Omega_{j,\varepsilon}}$ is
    of measure zero and hence \eqref{eq:BesselSAOnKfinite1} is
    equivalent to
    \begin{align}
      \MoveEqLeft
      \lim_{\varepsilon\to0}\int_{\Omega_{j,\varepsilon}}{\calB_j^{n,\sigma}
        \Phi(x,y)\cdot\overline{g(x,y)}\cdot\abs{ (x,y) }^{-\Real\sigma}\td
        x\td y}
      \nonumber
      \\
      &\stackrel{!}{=}
      \lim_{\varepsilon\to0}\int_{\Omega_{j,\varepsilon}}{\Phi(x,y)
        \cdot\overline{\calB_j^{n,\sigma}g(x,y)}\cdot\abs{ (x,y) }^{-\Real\sigma}\td
        x\td y}.\label{eq:BesselSAOnKfinite2}
    \end{align}
    On $\Omega_{j,\varepsilon}$ both $\abs{ x }$ and $\abs{ (x,y) }$ are bounded
    from below by $\varepsilon$. Hence,
    by~\eqref{eq:DerivativeKBessel}
    and~\eqref{eq:DerivativeHypergeometricFunction}, all factors in
    the integrand
    \begin{equation*}
      \abs{ x }^{\frac{\sigma-\tau-\mu}{2}}F(\tfrac{\abs{ y }^2}{\abs{ x }^2},\tau)
      f(x)\phi(y)\overline{g(x,y)}\abs{ (x,y) }^{-\Real\sigma}
    \end{equation*}
    can be arbitrarily often differentiated in $x$ and $y$ and the
    result is a smooth function on
    $\overline{\Omega_{j,\varepsilon}}$. Since further the
    hypergeometric function grows at most polynomially and the
    $K$-Bessel functions decay exponentially near $\infty$, all such
    differentiated terms decay exponentially as $\abs{ (x,y) }\to\infty$ and
    are hence integrable on
    $\overline{\Omega_{j,\varepsilon}}$. Therefore we can arbitrarily
    integrate by parts and all intermediate integrals exist. It
    remains to show that for $\varepsilon\to0$ all boundary terms that
    occur while integrating by parts vanish. By the asymptotic
    behaviour of the $K$-Bessel functions at $\infty$ the boundary
    terms at $\infty$ always vanish. Hence, by the choice of
    $\Omega_{j,\varepsilon}$, the only boundary terms that occur are
    for derivatives in $x_j$ at $x_j=\pm\varepsilon$. Therefore we
    only need to consider the parts
    \begin{equation*}
      x_j\frac{\partial^2}{\partial x_j^2}, \frac{\partial}{\partial
        x_j} \qtextq{and} E\frac{\partial}{\partial x_j}
    \end{equation*}
    of $\calB_j^{n,\sigma}$. We treat these three parts
    separately. Here we start with the right hand side of
    \eqref{eq:BesselSAOnKfinite2} and then integrate by parts once or
    twice. We only carry out the details for the case $0\leq\Real\tau<m$
    and $0\leq\Real\sigma<n$, the other cases are treated similarly with
    the corresponding estimates derived in part \ref{item:prop:EmbeddingIntoHCDual:i} \& \ref{item:prop:EmbeddingIntoHCDual:iii}.
    \begin{enumerate}[labelindent=0pt,ref=(\alph*)] % alph level
    \item $\frac{\partial}{\partial x_j}$. The boundary terms that
      occur when integrating by parts are (up to multiplication with a
      constant) of the form
      \begin{align*}
        \MoveEqLeft \int_{\RR^{n-1}} \Bigl(\Phi(x',\varepsilon,y)
        \overline{g(x',\varepsilon,y)}\abs*{ (x',\varepsilon,y)
        }^{-\Real\sigma}
        \\
        &\qquad-\Phi(x',-\varepsilon,y)\overline{g(x',-\varepsilon,y)}
        \abs*{ (x',-\varepsilon,y) }^{-\Real\sigma}\Bigr)\td x'\td
        y
      \end{align*}
      where we write $x=(x',x_j)$ with
      $x'=(x_1,\ldots,\widehat{x_j},\ldots,x_m)\in\RR^{m-1}$. The
      integrand obviously converges pointwise almost everywhere to $0$
      as $\varepsilon\to0$ and it suffices to find an integrable
      function independent of $\varepsilon$ dominating the integrand
      to apply the Dominated Convergence Theorem. For this note that
      in both $\Phi(x,y)$ and $g(x,y)$ the only terms dependent on the
      sign of $x_j$ are the polynomials $p(x)$ and $q(x,y)$,
      respectively. Using the same estimates as in the proof of part
      \ref{item:prop:EmbeddingIntoHCDual:i}
     \& \ref{item:prop:EmbeddingIntoHCDual:iii}
      we find that
      \begin{align*}
        \MoveEqLeft 
        \abs*{ \Phi(x',\varepsilon,y)\overline{g(x',\varepsilon,y)}
          \abs{ (x',\varepsilon,y) }^{-\Real\sigma}
        -\Phi(x',-\varepsilon,y)
          \overline{g(x',-\varepsilon,y)}\abs{ (x',-\varepsilon,y) }^{-\Real\sigma}
        }
        \\
        \leq{}&
        C\abs{ (x',\varepsilon,y) }^{\frac{-\Real\sigma-n+m-\Real\tau}{2}-\delta}
        (1+\abs{ (x',\varepsilon,y) })^Ne^{-\abs{ (x',\varepsilon,y) }}
        \\
        &
        \times\abs*{ p(x',\varepsilon)\overline{q(x',\varepsilon,y)}
          -p(x',-\varepsilon)\overline{q(x',-\varepsilon,y)} }
        \intertext{for some $N>0$ and an arbitrarily small
          $\delta>0$. Now note that
          $p(x',\varepsilon)\overline{q(x',\varepsilon,y)}
          -p(x',-\varepsilon)\overline{q(x',-\varepsilon,y)}$
          is an odd polynomial in $\varepsilon$ and hence of the form
          $\varepsilon\cdot r(x',\varepsilon,y)$. For the extra
          $\varepsilon$ from this observation we use the estimate
          $\abs{ \varepsilon }\leq\abs{ (x',\varepsilon,y) }$. We further estimate
          $\abs{ r(x',\varepsilon,y) }\leq C'(1+\abs{ (x',\varepsilon,y) })^{N'}$
          for some $C',N'>0$ and find (assuming $\varepsilon\leq1$)}
        \leq{}&
        CC'\abs{ (x',\varepsilon,y) }^{\frac{-\Real\sigma-n+m-\Real\tau}{2}+1-\delta}
        (1+\abs{ (x',1,y) })^{N+N'}e^{-\abs{ (x',y) }}.
        \intertext{Now suppose the exponent
          $\frac{-\Real\sigma-n+m-\Real\tau}{2}+1$ is $\leq0$. Then we
          can estimate} \leq{}&
        CC'\abs{ (x',y) }^{\frac{-\Real\sigma-n+m-\Real\tau}{2}
          +1-\delta}(1+\abs{ (x',1,y) })^{N+N'}e^{-\abs{ (x',y) }},
      \end{align*}
      which is independent of $\varepsilon\in(0,1)$ and integrable on
      $\RR^{n-1}$ for small $\delta>0$ since $\Real\sigma<n$ and
      $\Real\tau<m$. If the exponent
      $\frac{-\Real\sigma-n+m-\Real\tau}{2}+1-\delta$ is positive the
      estimate $\varepsilon\leq1$ also yields a dominant integrable
      function independent of $\varepsilon$. Therefore, in both cases
      we can apply the Dominated Convergence Theorem and obtain that
      as $\varepsilon\to0$ the boundary terms vanish.
    \item\label{item:prop:EmbeddingIntoHCDual:proof:ii:b}
      $x_j\frac{\partial^2}{\partial x_j^2}$. Integrating by part once
      gives (up to multiplication by a constant) the boundary terms
      \begin{align}
        \int_{\RR^{n-1}}
        \biggl(&\Phi(x',\varepsilon,y)\left(x_j\overline{\frac{\partial
              g}{\partial
              x_j}(x,y)}\right)_{x_j=\varepsilon}\abs{ (x',\varepsilon,y) }^{-\Real\sigma}
        \nonumber
        \\
        &-\Phi(x',-\varepsilon,y)\left(x_j\overline{\frac{\partial
              g}{\partial  x_j}(x,y)}
        \right)_{x_j=-\varepsilon}\abs{ (x',-\varepsilon,y) }^{-\Real\sigma}
            \biggr)\td  x'\td y.\label{eq:BdyTerms2}
      \end{align}
      We have
      \begin{equation*}
        g(x,y) =
        \widetilde{K}_{-\frac{\sigma}{2}+b}(\abs{ (x,y) })\abs{ (x,y) }^{2b}q(x,y)
      \end{equation*}
      and use the product rule to find $x_j\frac{\partial g}{\partial
        x_j}(x,y)$. The first term is by \eqref{eq:DerivativeKBessel}
      \begin{align*}
        \MoveEqLeft[3] 
        -\frac{x_j^2}{2}\widetilde{K}_{-\frac{\sigma}{2}+b+1}
        (\abs{ (x,y) })\abs{ (x,y) }^{2b}q(x,y)
        \\
        ={}&
        -\frac{x_j^2}{2\abs{ (x,y) }^2}\widetilde{K}_{-\frac{\sigma}{2}+b+1}
        (\abs{ (x,y) })\abs{ (x,y) }^{2(b+1)}q(x,y)
        \\
        \intertext{and putting $x_j=\pm\varepsilon$ gives}
        ={}&
        -\frac{\varepsilon^2}{2\abs{ (x',\varepsilon,y) }^2}
        \widetilde{K}_{-\frac{\sigma}{2}+b+1}(\abs{ (x',\varepsilon,y) })
        \abs{ (x',\varepsilon,y) }^{2(b+1)}q(x',\pm\varepsilon,y).
      \end{align*}
      Again $\varepsilon^2$ can be estimated by
      $\abs{ (x',\varepsilon,y) }^2$ and we find that
      \begin{equation*}
        \frac{\varepsilon^2}{2\abs{ (x',\varepsilon,y) }^2}
        \widetilde{K}_{-\frac{\sigma}{2}+b+1}
        (\abs{ (x',\varepsilon,y) })\abs{ (x',\varepsilon,y) }^{2(b+1)}
        \qtextq{and} 
        \widetilde{K}_{-\frac{\sigma}{2}+b}(\abs{ (x,y) })\abs{ (x,y) }^{2b}
      \end{equation*}
      satisfy the same estimates (see the proof of 
     \ref{item:prop:EmbeddingIntoHCDual:i}
  \& \ref{item:prop:EmbeddingIntoHCDual:iii}). The same
      argument applies to the other two terms in the product
      rule. Therefore the same argument as in (a) yields the vanishing
      of the boundary terms \eqref{eq:BdyTerms2}. Similar arguments
      yield the vanishing of the boundary terms that occur when
      integrating by parts for the second time. For this note that the
      formal adjoint of $\frac{\partial}{\partial x_j}$ on
      $L^2(\RR^n,\abs{ (x,y) }^{-\Real\sigma}\td x\td y)$ is
      $-\frac{\partial}{\partial
        x_j}+(\Real\sigma)\frac{x_j}{\abs{ (x,y) }^2}$. Both summands are
      treated separately as above.
    \item $E\frac{\partial}{\partial x_j}$. We have
      \begin{equation*}
        E\frac{\partial}{\partial x_j} =
        x_j\frac{\partial^2}{\partial x_j^2}+\sum_{k\neq
          j}{x_k\frac{\partial}{\partial x_k}\frac{\partial}{\partial
            x_j}}+\sum_k{y_k\frac{\partial}{\partial
            y_k}\frac{\partial}{\partial x_j}}.
      \end{equation*}
      The first term was already treated in part
      \ref{item:prop:EmbeddingIntoHCDual:proof:ii:b}. For the other
      two terms note that we can first integrate by parts the
      derivatives with respect to $x_k$ ($k\neq j$) and $y_k$ without
      any boundary terms occurring. Secondly, integration by parts of
      the derivative with respect to $x_j$ is dealt with as in
      part~\ref{item:prop:EmbeddingIntoHCDual:proof:ii:b}. This
      finishes the proof.\qedhere
    \end{enumerate}
  \end{enumerate}
\end{proof}

\begin{rem}
  It is necessary in the proof of
  Proposition~\ref{prop:EmbeddingIntoHCDual}~\ref{item:prop:EmbeddingIntoHCDual:ii} \& \ref{item:prop:EmbeddingIntoHCDual:iv}
  to restrict integration to the domain $\Omega_{j,\varepsilon}$. This
  is because the operator $\calB_j^{n,\sigma}$ is of second order and
  we have to integrate by parts twice. The intermediate result, i.e.\
  after integrating by parts once, may not be integrable on $\RR^n$
  and hence we need to restrict to a subdomain on which these
  intermediate results are integrable. The same problem occurs when
  one considers the two summands $x_j\Delta$ and
  $-(2E-\sigma+n)\frac{\partial}{\partial x_j}$ separately. Here the
  integral over $\RR^n$ for each of the two summands may not converge
  while the integral for the sum $\calB_j^{n,\sigma}$ does by
  Proposition~\ref{prop:EmbeddingIntoHCDual}~\ref{item:prop:EmbeddingIntoHCDual:i} \& \ref{item:prop:EmbeddingIntoHCDual:iii}.
\end{rem}

\begin{rem}
  The assertions \ref{item:prop:EmbeddingIntoHCDual:i}
  \& \ref{item:prop:EmbeddingIntoHCDual:iii} of
  Proposition~\ref{prop:EmbeddingIntoHCDual} construct an embedding
  of
  \begin{equation*}
    L^2(\RR^m,\abs{ x }^{-\Real\tau}\td x)_{K\cap
      O(1,m+1)}\boxtimes\calH^k(\RR^{n-m})
  \end{equation*}
  into the $\CC$-antilinear algebraic dual of
  $L^2(\RR^n,\abs{ (x,y) }^{-\Real\sigma}\td x\td y)_K$ for every
  $\tau\in
  T(\sigma,\mu)$. By \ref{item:prop:EmbeddingIntoHCDual:ii}
  \& \ref{item:prop:EmbeddingIntoHCDual:iv} this
  embedding is $\frakh$-equivariant.
\end{rem}

Let us now continue the proof of Theorem~\ref{thm:HIntertwiner} by
showing property~\ref{item:lem:GroupIntertwining:iv} in
Lemma~\ref{lem:GroupIntertwining}.  Let $v_1\in V_{1,c}$ and $v_2\in
V_2$. Suppose that
\begin{equation*}
  v_1(x,\tau,y) = f_{\tau,X}(x)\phi(y)\chi(\tau) \qtextq{and}
  v_2(x,y) = g(x,y)
\end{equation*}
with $X\in\calU(\frakh)$, $\chi\in C_c(T(\sigma,\mu))$,
$\phi\in\calH^k(\RR^{n-m})$, and $g\in
L^2(\RR^n,\abs{ (x,y) }^{-\Real\sigma}\td x\td y)_K$. We have
\begin{align*}
  \MoveEqLeft[3] \Hermit{\varphi(\di\rho_1(N_j)v_1)}{v_2}_{\calH_2}
  \\
  ={}&
  -i\int_{\RR^n}{\int_{T(\sigma,\mu)}{\abs{ x }^{\frac{\sigma-\tau-\mu}{2}}
      F(\tfrac{\abs{ y }^2}{\abs{ x }^2},\tau)(\calB_j^{m,\tau}
      f_{\tau,X})(x)\phi(y)\overline{g(x,y)}\abs{ (x,y) }^{-\Real\sigma}}}
  \\
  & \hspace{8cm} \chi(\tau)\td m_{\sigma,\mu}(\tau)\td x\td y
  \\
  ={}& -i\int_{T(\sigma,\mu)}
  \int_{\RR^n}{\abs{ x }^{\frac{\sigma-\tau-\mu}{2}}
    F(\tfrac{\abs{ y }^2}{\abs{ x }^2},\tau)(\calB_j^{m,\tau}f_{\tau,X})(x)
    \phi(y)\overline{g(x,y)}\abs{ (x,y) }^{-\Real\sigma}}
  \\
  & \hspace{8cm} \chi(\tau)\td x\td y \td m_{\sigma,\mu}(\tau),
  \intertext{where we were able to change the order of integration,
    because by
    Proposition~\ref{prop:EmbeddingIntoHCDual}~\ref{item:prop:EmbeddingIntoHCDual:i}
    the inner integral in the last line converges absolutely and is
    continuous in $\tau$ and the integration is only over the compact
    subset $\supp\chi\subseteq T(\sigma,\mu)$. Now, by
    Proposition~\ref{prop:EmbeddingIntoHCDual}~\ref{item:prop:EmbeddingIntoHCDual:ii}
    we find} ={}& -i\int_{T(\sigma,\mu)}
  \int_{\RR^n}{\abs{ x }^{\frac{\sigma-\tau-\mu}{2}}
    F(\tfrac{\abs{ y }^2}{\abs{ x }^2},\tau)f_{\tau,X}(x)\phi(y)
    \overline{\calB_j^{n,\sigma}g(x,y)}\abs{ (x,y) }^{-\Real\sigma}}
  \\
  & \hspace{8cm} \chi(\tau)\td x\td y \td m_{\sigma,\mu}(\tau)
  \\
  ={}& -i\int_{\RR^n}
  \int_{T(\sigma,\mu)}{\abs{ x }^{\frac{\sigma-\tau-\mu}{2}}
    F(\tfrac{\abs{ y }^2}{\abs{ x }^2},\tau)f_{\tau,X}(x)\phi(y)
    \overline{\calB_j^{n,\sigma}g(x,y)}\abs{ (x,y) }^{-\Real\sigma}}
  \\
  & \hspace{8cm} \chi(\tau)\td m_{\sigma,\mu}(\tau) \td x\td y
  \\
  ={}& \Hermit{\varphi(v_1)}{\di\rho_2(N_j)v_2}_{\calH_2},
\end{align*}
again using
Proposition~\ref{prop:EmbeddingIntoHCDual}~\ref{item:prop:EmbeddingIntoHCDual:i}
to change the order of integration.  This shows property
\ref{item:lem:GroupIntertwining:iv} of
Lemma~\ref{lem:GroupIntertwining}
for $v_1\in V_{1,c}$.
A similar argument with Proposition~\ref{prop:EmbeddingIntoHCDual}~\ref{item:prop:EmbeddingIntoHCDual:iii} and \ref{item:prop:EmbeddingIntoHCDual:iv} shows
Lemma~\ref{lem:GroupIntertwining}~\ref{item:lem:GroupIntertwining:iv}
for $v_1\in V_{1,d}$.
We therefore obtain that
$\varphi=\Psi(\sigma,k)$ intertwines the group action of $N_H$ and
hence of $H$. Thus the proof of Theorem~\ref{thm:HIntertwiner} is
complete.
\end{proof}

We obtain the whole spectral decomposition of
$\rho_{\sigma,\varepsilon}^G \restrictedto H$ from
\eqref{eq:On-mDecomposition} and Theorem~\ref{thm:HIntertwiner}.

\begin{thm}\label{thm:RepDecomp}
  For $\sigma\in i\RR\cup(-n,n)\cup(n+2\NN)$ the representation
  $\rho_{\sigma,\varepsilon}^G$ decomposes under the restriction to
  $H=O(1,m+1)\times O(n-m)$, $0<m<n$, as
  \begin{align*}
     \rho_{\sigma,\varepsilon}^G \restrictedto[\big] H \cong {} &
    \sideset{}{^\oplus}\sum_{k=0}^\infty
  \Bigl(\int_{i\RR_+}^\oplus{\rho_{\tau,\varepsilon+k}^{O(1,m+1)}\td\tau}
    \\
    &\oplus\bigoplus_{j\in\ZZ\cap
      \left[0,\frac{\abs{ \Real\sigma }-n+m-2k}{4}\right)}
    {\rho_{\abs{ \Real\sigma }-n+m-2k-4j,\varepsilon+k}^{O(1,m+1)}}\Bigr)
    \boxtimes\calH^k(\RR^{n-m}).
  \end{align*}
\end{thm}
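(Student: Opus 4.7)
The plan is to assemble Theorem~\ref{thm:RepDecomp} from the three ingredients already developed: the $O(n-m)$-isotypic decomposition \eqref{eq:On-mDecomposition}, the intertwining statement of Theorem~\ref{thm:HIntertwiner}, and the explicit description of the set $T(\sigma,\mu)$ and the measure $\di m_{\sigma,\mu}$ from Section~\ref{sec:SpectralDecomp}. No new analytic input is needed — the work is in matching parameters correctly.

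First I would reduce to $\sigma\in i\RR\cup(0,\infty)$. Since $\rho_{\sigma,\varepsilon}^G \cong \rho_{-\sigma,\varepsilon}^G$ for $\sigma\in(-n,n)$ via the Knapp--Stein operator $J(\sigma,\varepsilon)$, replacing $\sigma$ by $-\sigma$ does not change the restriction to $H$; this is precisely the reason $\abs{\Real\sigma}$ (rather than $\Real\sigma$) appears in the statement. From now on assume $\sigma\in i\RR\cup(0,n)\cup(n+2\NN)$, so that $\Real\sigma\geq 0$.

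Next, decompose $L^2(\RR^n,\abs{(x,y)}^{-\Real\sigma}\td x\td y)$ as an $O(n-m)$-representation via \eqref{eq:On-mDecomposition}. Since the $O(n-m)$-action commutes with $H$, for each $k\in\NN$ the $\calH^k(\RR^{n-m})$-isotypic component is an $H$-stable Hilbert subspace, and Theorem~\ref{thm:HIntertwiner} provides an explicit $H$-equivariant unitary isomorphism
\begin{equation*}
  \left(\int_{T(\sigma,\mu)}^\oplus \rho_{\tau,\varepsilon+k}^{O(1,m+1)}\td m_{\sigma,\mu}(\tau)\right)\boxtimes\calH^k(\RR^{n-m}) \xrightarrow{\ \sim\ } \bigl(\rho_{\sigma,\varepsilon}^G\restrictedto H\bigr)_k
\end{equation*}
onto this component, where $\mu=2k+n-m$. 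Summing over $k$ yields the $H$-decomposition
\begin{equation*}
 \rho_{\sigma,\varepsilon}^G \restrictedto[\big] H \cong
 \sideset{}{^\oplus}\sum_{k=0}^\infty
  \left(\int_{T(\sigma,\mu)}^\oplus \rho_{\tau,\varepsilon+k}^{O(1,m+1)}\td m_{\sigma,\mu}(\tau)\right)\boxtimes\calH^k(\RR^{n-m}).
\end{equation*}

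It then suffices to rewrite the direct integral over $(T(\sigma,\mu),\di m_{\sigma,\mu})$ in the form asserted by the theorem. By construction,
\begin{equation*}
 T(\sigma,\mu) = i\RR_+\ \sqcup\ \bigl\{\sigma-\mu-4j : j\in\NN,\ 0\leq j<\tfrac{\Real\sigma-\mu}{4}\bigr\},
\end{equation*}
and the measure $\di m_{\sigma,\mu}$ defined in \eqref{eq:DefMeasureMSigmaMu} decomposes accordingly into a continuous part on $i\RR_+$ (with strictly positive Plancherel density) and a finite sum of point masses on the discrete locus. The continuous part absorbs its density into the unitary equivalence class of the direct integral, producing $\int_{i\RR_+}^\oplus \rho_{\tau,\varepsilon+k}^{O(1,m+1)}\td\tau$; the discrete masses, being strictly positive at the relevant points (one would quickly verify the sign of the residues, which is the only routine calculation), produce the corresponding Hilbert direct summands $\rho_{\sigma-\mu-4j,\varepsilon+k}^{O(1,m+1)}$ with $\sigma-\mu-4j = \abs{\Real\sigma}-n+m-2k-4j$.

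The only point requiring care is identifying the discrete summands as honest irreducible unitary $O(1,m+1)$-representations. When $\sigma\in i\RR\cup(-n,n)$ the discrete parameters satisfy $0<\tau<m$, placing them inside the complementary series range. When $\sigma=n+2u\in n+2\NN$, the parameter $\tau=m+2u-2k-4j$ lies in $m+2\NN$ exactly when $j\leq(u-k)/2$, in which case $\rho_{\tau,\varepsilon+k}^{O(1,m+1)}$ is the relative discrete series subrepresentation; for $(u-k)/2<j<(m+2u-2k)/4$ we instead have $\tau\in(0,m)$ and get complementary series. This recovers the finer form given in the introduction. The main obstacle, namely establishing the $H$-intertwining of $\Psi(\sigma,k)$ on a non-compact factor via the Bessel operator computation, has already been overcome in Theorem~\ref{thm:HIntertwiner}, so the present proof is essentially bookkeeping.
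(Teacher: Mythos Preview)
Your proposal is correct and follows exactly the paper's approach: the paper derives Theorem~\ref{thm:RepDecomp} directly from the $O(n-m)$-isotypic decomposition \eqref{eq:On-mDecomposition} and Theorem~\ref{thm:HIntertwiner}, with the reduction to $\sigma\in i\RR\cup(0,\infty)$ via $\pi_{\sigma,\varepsilon}\cong\pi_{-\sigma,\varepsilon}$ already noted at the start of Section~\ref{sec:SpectralDecomp}. Your additional remarks on identifying the discrete parameters with complementary or relative discrete series are accurate elaboration of what the paper leaves implicit.
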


\section{Intertwining operators in the non-compact  picture}
\label{sec:IntertwinersNoncptPicture}

In Proposition~\ref{prop:IntertwiningProperty} we explicitly found an
intertwining operator $C^\infty(\RR^m\minuszero
)\boxtimes\calH^k(\RR^{n-m})\to C^\infty(\RR^n\setminus\{x=0\})$. In
the Fourier transformed picture this operator is given by
\begin{equation*}
  A(\sigma,\tau)(f\otimes\phi)(x,y) =
  \abs{ x }^{\frac{\sigma-\tau-\mu}{2}}{_2F_1}
  \left(\frac{\mu-\sigma+\tau}{4},\frac{\mu-\sigma-\tau}{4};
    \frac{\mu}{2};-\frac{\abs{ y }^2}{\abs{ x }^2}\right)f(x)\phi(y),
\end{equation*}
where again $\mu=2k+n-m$. In
Proposition~\ref{prop:EmbeddingIntoHCDual} we even showed that for
fixed $\sigma\in i\RR\cup(-n,n)\cup(n+2\NN)$, $k\in\NN$ and $\tau\in
T(\sigma,2k-n+m)$ the operator $A(\sigma,\tau)$ is intertwining
between the Harish-Chandra module of
$\rho_{\tau,\varepsilon+k}^{O(1,m+1)}\boxtimes\calH^k(\RR^{n-m})$ and
the $\CC$-antilinear algebraic dual of the Harish-Chandra module of
$\smash{\rho_{\sigma,\varepsilon}^{O(1,n+1)}}$. We now find a formal
expression for this intertwiner in the non-compact picture.

Consider the following diagram
\begin{equation*}
  \xymatrix{
    **[l] C_c^\infty(\RR^m\minuszero )\otimes\calH^k(\RR^{n-m}) \ar[r]^{A(\sigma,\tau)} \ar@<-1.68cm>[d]_{\calF_{\RR^m}\otimes\id} & \calS'(\RR^n) \ar[d]^{\calF_{\RR^n}}\\
    **[l] \calF_{\RR^m}C_c^\infty(\RR^m\minuszero )\otimes\calH^k(\RR^{n-m}) \ar[r]_{I(\sigma,\tau)} & \calS'(\RR^n).
  }
\end{equation*}
We extend the operator $A(\sigma,\tau)$ for all $\sigma, \tau\in \CC$
and determine the operator $I(\sigma,\tau)$ for $\Real \sigma \ll
\Real \tau \ll 0$.  We have
\begin{align*}
  \MoveEqLeft[3] \calF_{\RR^n}A(\sigma,\tau)(f\otimes\phi)(\xi,\eta)
  \\
  ={}&
  (2\pi)^{-\frac{n}{2}}\int_{\RR^m}
  \int_{\RR^{n-m}} e^{-ix\cdot\xi-iy\cdot\eta}
    \abs{ x }^{\frac{\sigma-\tau-\mu}{2}}
    \\
    &\qquad\qquad
    \times{_2F_1}\left(\frac{\mu-\sigma+\tau}{4},
      \frac{\mu-\sigma-\tau}{4};\frac{\mu}{2};-\frac{\abs{ y }^2}{\abs{ x }^2}\right)
    f(x)\phi(y)\td y \td x.
\end{align*}
We first calculate the integral over $y\in\RR^{n-m}$. Using
Appendix~\ref{app:FourierHankel} and the integral formula
\eqref{eq:IntFormulaJBesselHypergeometric} we find
\begin{align*}
  \MoveEqLeft[3]
  (2\pi)^{-\frac{n-m}{2}}\int_{\RR^{n-m}}
  e^{-iy\cdot\eta}
    {_2F_1}\left(\frac{\mu-\sigma+\tau}{4},
      \frac{\mu-\sigma-\tau}{4};\frac{\mu}{2};
      -\frac{\abs{ y }^2}{\abs{ x }^2}\right)\phi(y)\td y
  \\
  ={}&
  i^{-k}\phi(\eta)\abs{ \eta }^{-\frac{\mu-2}{2}}
  \int_0^\infty
  J_{\frac{\mu-2}{2}}(\abs{ \eta }s)
  {_2F_1}\left(\frac{\mu-\sigma+\tau}{4},
    \frac{\mu-\sigma-\tau}{4};\frac{\mu}{2};
    -\frac{s^2}{\abs{ x }^2}\right)s^{\frac{\mu}{2}}\td s
  \\
  ={}&
  i^{-k}\phi(\eta)\abs{ \eta }^{-\frac{\mu-2}{2}}
  \frac{2^{\frac{\sigma+2}{2}} \Gamma(\frac{\mu}{2})}{
    \Gamma(\frac{\mu-\sigma+\tau}{4})
    \Gamma(\frac{\mu-\sigma-\tau}{4})}
  \abs{ x }^{\frac{\mu-\sigma}{2}}
  \abs{ \eta }^{-\frac{\sigma+2}{2}}
  K_{\frac{\tau}{2}}(\abs{ x }\cdot\abs{ \eta }).
\end{align*}
If we let
\begin{equation*}
  \psi(x,\eta) :=
  \frac{2^{\frac{\sigma+2}{2}}i^{-k}
    \Gamma(\frac{\mu}{2})}{\Gamma(\frac{\mu-\sigma+\tau}{4})
    \Gamma(\frac{\mu-\sigma-\tau}{4})}\abs{ \eta }^{-\frac{\sigma+\mu}{2}}
  \abs{ x }^{-\frac{\tau}{2}}K_{\frac{\tau}{2}}(\abs{ x }\cdot\abs{ \eta })
\end{equation*}
then we find that
\begin{align*}
  \calF_{\RR^n}A(\sigma,\tau)(f\otimes\phi)(\xi,\eta) &=
  \calF_{\RR^m}(f\cdot\psi(-,\eta))(\xi)\cdot\phi(\eta)
  \\
  &= (2\pi)^{-\frac{m}{2}}(\calF_{\RR^m}\psi(-,\eta)*\calF_{\RR^m}f)(\xi)\cdot\phi(\eta).
\end{align*}
Therefore we compute, using again Appendix~\ref{app:FourierHankel} and
the integral formula \eqref{eq:IntFormulaJBesselKBessel} (noticing
that $K_\nu(x)=K_{-\nu}(x)$)
\begin{align*}
  \MoveEqLeft (\calF_{\RR^m}\psi(-,\eta))(\xi)
  \\
  ={}&
  \frac{2^{\frac{\sigma+2}{2}}i^{-k}
    \Gamma(\frac{\mu}{2})}{\Gamma(\frac{\mu-\sigma+\tau}{4})
    \Gamma(\frac{\mu-\sigma-\tau}{4})}
  \abs{ \eta }^{-\frac{\sigma+\mu}{2}}
  \abs{ \xi }^{-\frac{m-2}{2}}
  \int_0^\infty
  J_{\frac{m-2}{2}}(\abs{ \xi }s)s^{-\frac{\tau}{2}}
  K_{\frac{\tau}{2}}(\abs{ \eta }s)s^{\frac{m}{2}}\td  s
  \\
  ={}&
  \frac{2^{\frac{\sigma-\tau+m}{2}}i^{-k}
    \Gamma(\frac{\mu}{2})\Gamma(\frac{m-\tau}{2})}{
    \Gamma(\frac{\mu-\sigma+\tau}{4})
    \Gamma(\frac{\mu-\sigma-\tau}{4})}
  \abs{ \eta }^{-\frac{\sigma+\tau+\mu}{2}}(\abs{ \xi }^2+\abs{ \eta }^2)^{\frac{\tau-m}{2}}.
\end{align*}
Altogether we see that $I(\sigma,\mu)$ is a partial convolution
operator combined with a multiplication operator
\begin{equation*}
  I(\sigma,\tau)(f\otimes\phi)(\xi,\eta) =
  \const\cdot\abs{ \eta }^{-\frac{\sigma+\tau+\mu}{2}}
  \phi(\eta)\int_{\RR^m}
  (\abs{ \xi-\xi' }^2+\abs{ \eta }^2)^{\frac{\tau-m}{2}}f(\xi')\td\xi'.
\end{equation*}
For $m=n-1$ this operator appears in \cite{KS13,KS14} as a special case.
This expression for $I(\sigma,\tau)$ is valid for $\Real \sigma \ll
\Real \tau \ll 0$. It has a holomorphic extension to all
$\sigma,\tau\in \CC$ for $f\in\calF_{\RR^m}C_c^\infty(\RR^m\minuszero
)$.

\appendix

\section{Decomposition of principal series}
\label{app:DecompPrincipalSeries}

We give a short alternative proof for the decomposition of the
principal series $\pi_{\sigma,\varepsilon}^G$, $\sigma\in i\RR$,
$\varepsilon\in\ZZ/2\ZZ$, into irreducible $H$-representations. This
decomposition turns out to be essentially equivalent to the Plancherel
formula for $L^2(O(1,m+1)/(O(1)\times O(m+1)),\calL'_\delta)$, where
$\calL'_\delta$ are the line bundles over the Riemannian symmetric
space $O(1,m+1)/(O(1)\times O(m+1))$ induced by the characters
$(a,g)\mapsto a^\delta$ of $O(1)\times O(m+1)$, $\delta\in\ZZ/2\ZZ$.

Consider the flag variety $X=G/P$. Since $G/P\cong K/M$ we can
identify $X$ with the unit sphere $S^n\subseteq\RR^{n+1}$. For this we
define a $G$-action on $S^n$ by the formula
\begin{equation*}
  g\circ x := \frac{\pr_x(g(1,x))}{\pr_0(g(1,x))},  \qqtext{$x\in S^n$},
\end{equation*}
where $\pr_0:\RR^{n+2}\to\RR$ and $\pr_x:\RR^{n+2}\to\RR^{n+1}$ denote
the projections onto the first coordinate and the last $n+1$
coordinates, respectively, and $g(1,x)$ is the usual action of $g$ on
$(1,x)\in\RR\times\RR^{n+1}\cong\RR^{n+2}$. Then it is easy to prove
the following:

\begin{lem}
  The operation $\circ$ defines a transitive group action of $G$ on
  $S^n$. The stabilizer of the point $e_{n+1}=(0,\ldots,0,1)\in S^n$
  is equal to the parabolic subgroup~$P$. The maximal compact subgroup
  $K$ also acts transitively on $S^n$ and the stabilizer subgroup of
  the point $x_0$ is equal to $M$.
\end{lem}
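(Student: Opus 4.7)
The plan is to verify each claim by direct computation, exploiting that $G$ preserves the quadratic form $Q(v)=v_1^2-v_2^2-\cdots-v_{n+2}^2$ so that $G$ sends the null cone to itself, and that $(1,x)$ lies on the null cone exactly when $x\in S^n$.

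First I would check that $g\circ x$ is well-defined and lands in $S^n$. For $x\in S^n$ set $v:=g(1,x)$; then $Q(v)=Q(1,x)=0$, so $\pr_0(v)^2=\abs{\pr_x(v)}^2$. If $\pr_0(v)=0$ then also $\pr_x(v)=0$ and hence $v=0$, contradicting invertibility of $g$. Dividing then gives $\abs{g\circ x}=1$. The action axioms follow from the identity $g(1,x)=\pr_0(g(1,x))\cdot(1,g\circ x)$: chaining this for $h$ then $g$ shows $(gh)(1,x)$ is a scalar multiple of $(1,g\circ(h\circ x))$, and since the scalar must equal $\pr_0((gh)(1,x))$ we get $(gh)\circ x=g\circ(h\circ x)$; the identity acts as the identity trivially.

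Next I would show transitivity of $K$ and identify $\mathrm{Stab}_K(e_{n+1})$. For $k=\diag(\varepsilon,k')\in O(1)\times O(n+1)$ one computes $k\circ x=\varepsilon^{-1}k'x=\pm k'x$, so transitivity of $O(n+1)$ on $S^n$ gives transitivity of $K$. The stabilizer equation $\varepsilon^{-1}k'e_{n+1}=e_{n+1}$ forces either $\varepsilon=1$ and $k'e_{n+1}=e_{n+1}$, yielding $k\in M^+$, or $\varepsilon=-1$ and $k'e_{n+1}=-e_{n+1}$, yielding $k\in m_0M^+$. Hence $\mathrm{Stab}_K(e_{n+1})=M^+\cup m_0M^+=M$, and transitivity of $K\subseteq G$ yields transitivity of $G$.

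For the stabilizer of $e_{n+1}$ in $G$, I would first check directly that $A$ and $N$ fix $(1,e_{n+1})$ up to scalar. The element $H=2(E_{1,n+2}+E_{n+2,1})$ acts on $\linspan(e_1,e_{n+2})$ as $2\bigl(\begin{smallmatrix}0&1\\1&0\end{smallmatrix}\bigr)$, so $e^{tH}(1,e_{n+1})=e^{2t}(1,e_{n+1})$ and thus $e^{tH}\circ e_{n+1}=e_{n+1}$. A quick matrix computation shows $N_j(1,e_{n+1})=0$ for every $j$, so with $X=\sum x_jN_j$ we have $X(1,e_{n+1})=0$ and therefore $n_x$ fixes $(1,e_{n+1})$. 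Combined with $M\subseteq\mathrm{Stab}_K(e_{n+1})$ this yields $P=MAN\subseteq\mathrm{Stab}_G(e_{n+1})$. For the reverse inclusion I would invoke the Iwasawa decomposition $G=KAN$: given $g\in\mathrm{Stab}_G(e_{n+1})$, write $g=kan$; since $a,n$ already stabilize $e_{n+1}$, so does $k$, hence $k\in M\subseteq P$ and $g\in P$.

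The argument is entirely bookkeeping; no step is really an obstacle. The only subtle point is keeping track of the normalization in $g\circ x=\pr_x(g(1,x))/\pr_0(g(1,x))$ when verifying the action axiom, which is handled cleanly by the scalar-multiple identity $g(1,x)=\pr_0(g(1,x))\cdot(1,g\circ x)$.
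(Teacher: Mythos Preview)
Your proof is correct and fills in precisely the details the paper omits (it states the lemma without proof, calling it ``easy to prove''). The approach---null cone invariance for well-definedness, the scalar identity $g(1,x)=\pr_0(g(1,x))\cdot(1,g\circ x)$ for the action axioms, direct verification that $M$, $A$, $N$ each fix $e_{n+1}$, and Iwasawa $G=KAN$ for the reverse inclusion---is the natural one and there is nothing to add.
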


Let us consider a slightly different embedding of $O(1,m+1)\times
O(n-m)$ into $G=O(1,n+1)$. Let
\begin{equation*}
  H' := \Set{ \diag(g,h) }{ g\in O(1,m+1),h\in O(n-m) }.
\end{equation*}
Then clearly $H$ and $H'$ are conjugate and hence the branching to $H$
is equivalent to the branching to $H'$. We shall therefore only deal
with $H'$ in this section.

\begin{lem}
  Under the action $\circ$ of the group $H'$ the sphere $S^n$
  decomposes into the two orbits
  \begin{align*}
    \calO_0 &:= H'\circ e_1 =\Set{ (x',0) }{ x'\in S^m },
    \\
    \calO_1 &:= H'\circ e_{n+1} = \Set{ (x',x'')\in S^n }{
      x'\in\RR^{m+1},x''\in\RR^{n-m},x''\neq0 }.
  \end{align*}
  The orbit $\calO_1$ is open and dense in $S^n$. The isotropy group
  of $e_{n+1}$ in $H'$ is
  \begin{equation*}
    S =\Set{ (a,g,h,a) }{ a\in O(1),g\in O(m+1),h\in O(n-m-1) }.
  \end{equation*}
\end{lem}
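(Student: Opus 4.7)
The plan is to parametrize $\RR^{n+2}=\RR\oplus\RR^{m+1}\oplus\RR^{n-m}$ so that a point $x\in S^n$ is written $x=(x',x'')$ with $x'\in\RR^{m+1}$, $x''\in\RR^{n-m}$ and $|x'|^2+|x''|^2=1$. For $\diag(g,h)\in H'$, writing $g(1,x')=(a,v)\in\RR\oplus\RR^{m+1}$, a direct computation from the definition of $\circ$ yields
\begin{equation*}
  \diag(g,h)\circ x = \bigl(v/a,\,hx''/a\bigr).
\end{equation*}
In particular the locus $\{x''=0\}$ is $H'$-invariant, so the candidate sets $\calO_0$ and $\calO_1$ are unions of orbits.

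On $\calO_0$ the displayed formula reduces to $\diag(g,h)\circ(x',0)=(v/a,0)$, which is precisely the action of the previous lemma applied to $O(1,m+1)$ on $S^m$, with $h$ acting trivially. That lemma gives transitivity of $O(1,m+1)$ on $S^m$, so $\calO_0=H'\circ e_1$ is a single orbit.

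For $\calO_1$, given $(x',x'')\in S^n$ with $x''\neq 0$, I would directly exhibit $g\in O(1,m+1)$ and $h\in O(n-m)$ such that $\diag(g,h)\circ e_{n+1}=(x',x'')$. Set $a:=1/|x''|>0$ and $v:=x'/|x''|$; the identity $|x'|^2+|x''|^2=1$ becomes $a^2-|v|^2=1$, so $(a,v)$ lies on the unit timelike hyperboloid in $\RR^{m+2}$ and is therefore the first column of some $g\in O(1,m+1)$. Choose $h\in O(n-m)$ with $he_{n-m}=x''/|x''|$; plugging into the formula above yields $\diag(g,h)\circ e_{n+1}=(x',x'')$. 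Hence $\calO_1=H'\circ e_{n+1}$. It is open as the complement of the closed set $\{x''=0\}$, and dense because that closed set is a lower-dimensional submanifold (here one uses $n-m\geq 1$).

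Finally, to determine the isotropy of $e_{n+1}$ (where $x'=0$, $x''=e_{n-m}$), the equation $\diag(g,h)\circ e_{n+1}=e_{n+1}$ reduces to $v=0$ and $he_{n-m}=ae_{n-m}$. From $v=0$ together with $a^2-|v|^2=1$ one gets $a=\pm 1$, so $ge_1=\pm e_1$; combined with $g\in O(1,m+1)$ this forces $g=\diag(\pm 1,g')$ with $g'\in O(m+1)$. Likewise $he_{n-m}=\pm e_{n-m}$ gives $h=\diag(h'',\pm 1)$ with $h''\in O(n-m-1)$, and the two signs must agree. This yields exactly the stated quadruple $(a,g',h'',a)$. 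The only mild subtlety is tracking that the two $\pm 1$ entries carry the \emph{same} sign $a$; beyond that the argument is routine linear algebra on the Lorentz and Euclidean factors.
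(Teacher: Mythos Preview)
Your proof is correct. The paper states this lemma without proof, treating it as a routine verification, so there is no argument to compare against; your direct computation of the $H'$-action in the $(x',x'')$-coordinates and the explicit construction of $(g,h)$ carrying $e_{n+1}$ to an arbitrary point of $\calO_1$ is exactly the kind of elementary check the authors are leaving to the reader.
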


Now consider the realization of $\pi_{\sigma,\varepsilon}$ in the
compact picture, i.e.\ on $L^2(G/P,\calL_{\sigma,\varepsilon})$, where
$\calL_{\sigma,\varepsilon}$ denotes the line bundle over $G/P$
associated to the character
$man\mapsto\xi_\varepsilon(m)a^{\sigma+\rho}$ of $P$. Since the orbit
$\calO_1\subseteq G/P$ is open and dense we have
\begin{equation*}
  L^2(G/P,\calL_{\sigma,\varepsilon}) \cong
  L^2(\calO_1,\calL_{\sigma,\varepsilon} \restrictedto {\calO_1}).
\end{equation*}
Now the stabilizer $S$ of $eP\in G/P$ in $H$ is contained in $P$ and
hence the restriction of the line bundle $\calL_{\sigma,\varepsilon}$
to $\calO_1\cong H'/S$ is induced by the restriction of the
corresponding character of $P$ to $S$ which is simply
$\xi_\varepsilon \restrictedto S$. Therefore we find
\begin{equation*}
  L^2(G/P,\calL_{\sigma,\varepsilon}) \cong
  L^2(\calO_1,\calL_\varepsilon),
\end{equation*}
where $\calL_\varepsilon$ is the line bundle over $\calO_1\cong H'/S$
induced by the character $\xi_\varepsilon \restrictedto S$. Using the
decomposition of $L^2(S^{n-m-1})$ into spherical harmonics we find
\begin{equation*}
  L^2(\calO_1,\calL_\varepsilon) \cong
  \sideset{}{^\oplus}\sum_{k=0}^\infty L^2(O(1,m+1)/(O(1)\times
  O(m+1)),\calL'_{\varepsilon+k})\boxtimes\calH^k(\RR^{n-m})
\end{equation*}
as $H'$-representations, where for $\delta\in(\ZZ/2\ZZ)$ we denote by
$\calL'_\delta$ the line bundle over the symmetric space
$O(1,m+1)/(O(1)\times O(m+1))$ induced by the character $(a,g)\mapsto
a^\delta$ of $O(1)\times O(m+1)$. Together we obtain
\begin{equation*}
  \pi_{\sigma,\varepsilon}^G \restrictedto[\big] H \cong
  \sideset{}{^\oplus}\sum_{k=0}^\infty L^2(O(1,m+1)/(O(1)\times
  O(m+1)),\calL'_{\varepsilon+k})\boxtimes\calH^k(\RR^{n-m})
\end{equation*}
and hence the decomposition of $\pi_{\sigma,\varepsilon}^G
\restrictedto H$ into
irreducible $H$-representations is equivalent to the decomposition of
$L^2(O(1,m+1)/(O(1)\times O(m+1)),\calL'_\delta)$ into irreducible
$O(1,m+1)$-representations, $\delta\in\ZZ/2\ZZ$. Since
$O(1,m+1)/(O(1)\times O(m+1))$ is a Riemannian symmetric space of rank
one the decomposition of $L^2(O(1,m+1)/(O(1)\times
O(m+1)),\calL'_\delta)$ is well-known and given by
\begin{equation*}
  L^2(O(1,m+1)/(O(1)\times O(m+1)),\calL'_\delta) \cong
  \int_{i\RR_+}^\oplus{\pi_{\tau,\delta}^{O(1,m+1)}\td\tau},
\end{equation*}
the unitary isomorphism established by the spherical Fourier
transform. This proves Theorem~\ref{thm:RepDecomp} for the special
case $\sigma\in i\RR$.

\section{Special functions}

For the sake of completeness we collect here the necessary formulas
for certain special functions needed in this paper.

\subsection{The $K$-Bessel function}\label{app:KBessel}

We renormalize the classical $K$-Bessel function $K_\alpha(z)$ by
\begin{equation*}
  \widetilde{K}_\alpha(z) :=
  \left(\frac{z}{2}\right)^{-\alpha}K_\alpha(z).
\end{equation*}
Then $\widetilde{K}_\alpha(z)$ solves the differential equation
\begin{equation}
  \frac{\di^2u}{\di z^2}+\frac{2\alpha+1}{z}\frac{\di u}{\di z}-u =
  0.\label{eq:DiffEqKBessel}
\end{equation}
It has the following asymptotic behaviour as $x\to0$ (see
\cite[Chapters~III~\&~VII]{Wat44}):
\begin{align}
  \widetilde{K}_\alpha(x) &=
  \begin{cases}
    \frac{\Gamma(\alpha)}{2}\left(\frac{x}{2}\right)^{-2\alpha}+o(x^{-2\alpha}),
    & \text{for $\Real\alpha>0$,}
    \\[1ex]
    -\log\left(\frac{x}{2}\right)+o\left(\log\left(\frac{x}{2}\right)\right),
    & \text{for $\Real\alpha=0$,}
    \\[1ex]
    \frac{\Gamma(-\alpha)}{2}+o(1), & \mbox{for $\Real\alpha<0$.}
  \end{cases}\label{eq:KBesselAsymptotics0}
  \intertext{Further, as $x\to\infty$ we have} 
  \widetilde{K}_\alpha(x)
  &=
  \frac{\sqrt{\pi}}{2}\left(\frac{x}{2}\right)^{-\alpha-\frac{1}{2}}
  e^{-x}\left(1+\calO\left(\frac{1}{x}\right)\right).
  \label{eq:KBesselAsymptoticsInfty}
\end{align}
For the derivative of $\widetilde{K}_\alpha(z)$ the following identity
holds (see \cite[equation III.71~(6)]{Wat44}):
\begin{equation}
  \frac{\di}{\di z}\widetilde{K}_\alpha(z) =
  -\frac{z}{2}\widetilde{K}_{\alpha+1}(z).\label{eq:DerivativeKBessel}
\end{equation}
This identity can be used to write the differential equation \eqref{eq:DiffEqKBessel} as the three-term recurrence relation (see \cite[equation III.71~(6)]{Wat44}):
\begin{equation}
  z^2 \widetilde{K}_{\alpha+1}(z) = 4\alpha \widetilde{K}_\alpha(z) + 4\widetilde{K}_{\alpha-1}(z).\label{eq:RecRelKBessel}
\end{equation}

\subsection{The Gau\ss\ hypergeometric  function}
\label{app:GaussHypergeometric}

Consider the classical Gau\ss\ hypergeometric function
\begin{equation*}
  {_2F_1}(a,b;c;z) =
  \sum_{n=0}^\infty \frac{(a)_n(b)_n}{n!(c)_n}z^n,
\end{equation*}
where $(a)_n=a(a+1)\cdots(a+n-1)$ denotes the Pochhammer symbol. The
function ${_2F_1}(a,b;c;z)$ is holomorphic in $z$ for
$z\notin[1,\infty)$ and meromorphic in the parameters
$a,b,c\in\CC$. It solves the differential equation
\begin{equation}
  (1-z)z\frac{\di^2 u}{\di z^2}+(c-(a+b+1)z)\frac{\di u}{\di z}-abu =
  0.\label{eq:DiffEqHypergeometric}
\end{equation}
The following formula allows to study the asymptotic behaviour of the
Gau\ss\ hypergeometric function near $z=-\infty$ (see \cite[equation
9.132~(2)]{GR65}):
\begin{align}\label{eq:2F1Identity1}
  {_2F_1}(a,b;c;z) ={}&
  \frac{\Gamma(b-a)\Gamma(c)}{\Gamma(b)\Gamma(c-a)}(-z)^{-a}{_2F_1}(a,a-c+1;a-b+1;\tfrac{1}{z})\\
  \nonumber &
  +\frac{\Gamma(a-b)\Gamma(c)}{\Gamma(a)\Gamma(c-b)}(-z)^{-b}{_2F_1}(b,b-c+1;b-a+1;\tfrac{1}{z}).
\end{align}
Both summands on the right hand side of \eqref{eq:2F1Identity1} are
generically linear independent solutions to
\eqref{eq:DiffEqHypergeometric}. Their Wronskian is given by
\begin{gather*}
  W(z^{-a}{_2F_1}(a,a-c+1;a-b+1;-\tfrac{1}{z}),z^{-b}{_2F_1}(b,b-c+1;b-a+1;-\tfrac{1}{z}))
  \\
  = (a-b)(1+z)^{c-a-b-1}z^{-c}.
\end{gather*}
The following simple formula for the derivative of the hypergeometric
function holds:
\begin{equation}
  \frac{\di}{\di z}{_2F_1}(a,b;c;z) =
  \frac{ab}{c}{_2F_1}(a+1,b+1;c+1;z).\label{eq:DerivativeHypergeometricFunction}
\end{equation}
We recall Kummer's transformation formula (see
\cite[equation~9.131~(1)]{GR65}):
\begin{equation}
  {_2F_1}(a,b;c;z) =
  (1-z)^{c-a-b}{_2F_1}(c-a,c-b;c;z).\label{eq:KummerFormula}
\end{equation}
For $a\in-\NN$ the hypergeometric function ${_2F_1}(a,b;c;z)$
degenerates to a polynomial which can be expressed in terms of the
Jacobi polynomials $P_n^{(a,b)}(z)$ (see
\cite[equation~8.962~(1)]{GR65}):
\begin{equation}
  {_2F_1}(-n,b;c;z) = \frac{n!}{(c)_n}P_n^{(c-1,b-c-n)}(1-2z), 
  \qqtext{$n\in\NN$},\label{eq:HypergeometricPolynomials}
\end{equation}
where
\begin{equation*}
  P_n^{(a,b)}(z) =
  \frac{1}{n!}\sum_{k=0}^n
  \frac{(-n)_k(a+b+n+1)_k(a+k+1)_{n-k}}{k!}\left(\frac{1-z}{2}\right)^k.
\end{equation*}

\subsection{Integral formulas}

We consider the $J$-Bessel function $J_\nu(z)$ and the $K$-Bessel
function $K_\nu(z)$. For the $J$-Bessel function and the
hypergeometric function the following integral formula holds for
$y>0$, $\Real\lambda>0$ and
$-1<\Real\nu<2\max(\Real\alpha,\Real\beta)-\frac{3}{2}$ (see
\cite[equation 7.542~(10)]{GR65})
\begin{align}
  \MoveEqLeft \int_0^\infty
  {_2F_1}(\alpha,\beta;\nu+1;-\lambda^2x^2)J_\nu(xy)x^{\nu+1}\td x
  \nonumber
  \\
  &=
  \frac{2^{\nu-\alpha-\beta+2}\Gamma(\nu+1)}{\lambda^{\alpha+\beta}
    \Gamma(\alpha)\Gamma(\beta)}
  y^{\alpha+\beta-\nu-2}K_{\alpha-\beta}
  \left(\frac{y}{\lambda}\right).\label{eq:IntFormulaJBesselHypergeometric}
\end{align}
For the $J$-Bessel function and the $K$-Bessel function we have the
following integral formula for $\Real\mu>\abs{ \Real\nu }-1$ and $\Real
b>\abs{ \Imaginary a }$ (see \cite[equation~6.576~(7)]{GR65})
\begin{equation}
  \int_0^\infty x^{\mu+\nu+1}{J_\mu(ax)K_\nu(bx)\td x} =
  2^{\mu+\nu}a^\mu
  b^\nu\frac{\Gamma(\mu+\nu+1)}{(a^2+b^2)^{\mu+\nu+1}}.\label{eq:IntFormulaJBesselKBessel}
\end{equation}

\subsection{Fourier and Hankel transform}\label{app:FourierHankel}

Let $\calF_{\RR^n}$ denote the Euclidean Fourier transform on $\RR^n$
as defined in \eqref{eq:DefFourierTransform}. Let $k\in\NN$ and
$\phi\in\calH^k(\RR^n)$. For $f\in L^2(\RR_+,r^{n+2k-1}\td r)$ denote
by $f\otimes\phi\in L^2(\RR^n)$ the function
\begin{equation*}
  (f\otimes\phi)(x) := f(\abs{ x })\phi(x),  \qqtext{$x\in\RR^n$}.
\end{equation*}
Then by \cite[Chapter~IV, Theorem~3.10]{SW71}
\begin{equation*}
  \calF_{\RR^n}(f\otimes\phi) =
  i^{-k}(\calH_{\frac{n+2k-2}{2}}f)\otimes\phi,
\end{equation*}
where $\calH_\nu$ is the modified Hankel transform of parameter
$\nu\geq-\frac{1}{2}$
\begin{equation*}
  \calH_\nu f(r) = r^{-\nu}\int_0^\infty{J_\nu(rs)f(s)s^{\nu+1}\td
    s},
\end{equation*}
which is a unitary isomorphism (up to a scalar multiple) on
$L^2(\RR_+,r^{2\nu+1}\td r)$.

\bibliographystyle{amsplain} 
\bibliography{bibdb}

\providecommand{\bysame}{\leavevmode\hbox to3em{\hrulefill}\thinspace}
\providecommand{\MR}{\relax\ifhmode\unskip\space\fi MR }
% \MRhref is called by the amsart/book/proc definition of \MR.
\providecommand{\MRhref}[2]{%
  \href{http://www.ams.org/mathscinet-getitem?mr=#1}{#2}
}
\providecommand{\href}[2]{#2}
\begin{thebibliography}{10}

\bibitem{Ber57}
M.~Berger, \emph{Les espaces sym\'etriques noncompacts}, Ann. Sci. \'Ecole
  Norm. Sup. (3) \textbf{74} (1957), 85--177.

\bibitem{Del98}
P.~Delorme, \emph{Formule de {P}lancherel pour les espaces sym\'etriques
  r\'eductifs}, Ann. of Math. (2) \textbf{147} (1998), no.~2, 417--452.

\bibitem{Fle77}
M.~Flensted-Jensen, \emph{Spherical functions on a simply connected semisimple
  {L}ie group. {II}. {T}he {P}aley-{W}iener theorem for the rank one case},
  Math. Ann. \textbf{228} (1977), no.~1, 65--92.

\bibitem{GS64}
I.~M. Gel'fand and G.~E. Shilov, \emph{Generalized functions. {V}ol. {I}:
  {P}roperties and operations}, Translated by Eugene Saletan, Academic Press,
  New York, 1964.

\bibitem{GK62}
S.~G. Gindikin and F.~I. Karpelevi{\v{c}}, \emph{Plancherel measure for
  symmetric {R}iemannian spaces of non-positive curvature}, Dokl. Akad. Nauk.
  SSSR \textbf{145} (1962), 252--255.

\bibitem{GR65}
I.~S. Gradshteyn and I.~M. Ryzhik, \emph{Table of integrals, series, and
  products}, seventh ed., Elsevier/Academic Press, Amsterdam, 2007.

\bibitem{HC58}
Harish-Chandra, \emph{Spherical functions on a semisimple {L}ie group. {I},
  {II}}, Amer. J. Math. \textbf{80} (1958), 241--310, 553--613.

\bibitem{HKM12}
J.~Hilgert, T.~Kobayashi, and J.~M{\" o}llers, \emph{Minimal representations
  via {B}essel operators}, J. Math. Soc. Japan \textbf{66} (2014), no.~2,
  349--414.

\bibitem{JW77}
K.~D. Johnson and N.~R. Wallach, \emph{Composition series and intertwining
  operators for the spherical principal series. {I}}, Trans. Amer. Math. Soc.
  \textbf{229} (1977), 137--173.

\bibitem{Kob93}
T.~Kobayashi, \emph{The restriction of ${A}_{\mathfrak q}(\lambda)$ to
  reductive subgroups}, Proc. Japan Acad. Ser. A Math. Sci. \textbf{69} (1993),
  262--267.

\bibitem{Kob05a}
\bysame, \emph{Analysis on homogeneous spaces revisited -- from viewpoint of
  branching laws of unitary representations}, 2005, Harmonic Analysis on Lie
  Groups and Symmetric Spaces in honor of Jacques Faraut, Joint meeting of
  Seminar Sophus Lie.\ Nancy, France.

\bibitem{Kob05}
\bysame, \emph{Theory of discretely decomposable restrictions of unitary
  representations of semisimple {L}ie groups and some applications}, Sugaku
  Expositions \textbf{18} (2005), no.~1, 1--37.

\bibitem{KM08}
T.~Kobayashi and G.~Mano, \emph{The {S}chr\"odinger model for the minimal
  representation of the indefinite orthogonal group {${\rm O}(p,q)$}}, Mem.
  Amer. Math. Soc. \textbf{213} (2011), no.~1000, vi+132 pp.

\bibitem{KOP11}
T.~Kobayashi, B.~{\O}rsted, and M.~Pevzner, \emph{Geometric analysis on small
  unitary representations of {${\rm GL}(N,\Bbb R)$}}, J. Funct. Anal.
  \textbf{260} (2011), no.~6, 1682--1720.

\bibitem{KS13}
T.~Kobayashi and B.~Speh, \emph{Symmetry breaking for representations of rank
  one orthogonal groups},  (2013), to appear in Mem. Amer. Math. Soc.,
  available at \href{http://arxiv.org/abs/1310.3213}{arXiv:1310.3213}.

\bibitem{KS14}
\bysame, \emph{Intertwining operators and the restriction of representations of
  rank-one orthogonal groups}, C. R. Math. Acad. Sci. Paris \textbf{352}
  (2014), no.~2, 89--94.

\bibitem{KodSugaku1}
K.~Kodaira, \emph{On singular solutions of second order differential operators.
  {I}. {G}eneral theory}, S\=ugaku (Mathematics) \textbf{1} (1948), 177--191.

\bibitem{Kod49}
\bysame, \emph{The eigenvalue problem for ordinary differential equations of
  the second order and {H}eisenberg's theory of {$S$}-matrices}, Amer. J. Math.
  \textbf{71} (1949), 921--945.

\bibitem{KodSugaku2}
\bysame, \emph{On singular solutions of second order differential equations.
  {II}. {A}pplications to special problems}, S\=ugaku (Mathematics) \textbf{2}
  (1949), 113--139.

\bibitem{Kod50}
\bysame, \emph{On ordinary differential equations of any even order and the
  corresponding eigenfunction expansions}, Amer. J. Math. \textbf{72} (1950),
  502--544.

\bibitem{Mol66}
V.~F. Mol{\v{c}}anov, \emph{Harmonic analysis on a hyperboloid of one sheet},
  Dokl. Akad. Nauk SSSR \textbf{171} (1966), 794--797.

\bibitem{Moe10}
J.~M\"{o}llers, \emph{Minimal representations of conformal groups and
  generalized {L}aguerre functions}, Ph.D. thesis, Universit\"{a}t Paderborn,
  Institut f\"{u}r Mathematik, 2010, available at
  \href{http://arxiv.org/abs/1009.4549}{arXiv:1009.4549}.

\bibitem{MS14}
J.~M{\"o}llers and B.~Schwarz, \emph{Branching laws for small unitary
  representations of {${\rm GL}(n,\Bbb C)$}}, Internat. J. Math. \textbf{25}
  (2014), no.~6, 1450052.

\bibitem{Muk68}
N.~Mukunda, \emph{Unitary representations of the {L}orentz groups: {R}eduction
  of the supplementary series under a noncompact subgroup}, J. Math. Phys.
  \textbf{9} (1968), 417--431.

\bibitem{Shi66}
T.~Shintani, \emph{On the decomposition of regular representations of general
  {L}orentz groups on real quadric hypersurfaces into irreducible
  representations}, Master's thesis, the University of Tokyo, 1966.

\bibitem{SV11}
B.~Speh and T.~N. Venkataramana, \emph{Discrete components of some
  complementary series}, Forum Math. \textbf{23} (2011), no.~6, 1159--1187.

\bibitem{SW71}
E.~M. Stein and G.~Weiss, \emph{Introduction to {F}ourier analysis on
  {E}uclidean spaces}, Princeton University Press, Princeton, N.J., 1971,
  Princeton Mathematical Series, No. 32.

\bibitem{Sto32}
M.~H. Stone, \emph{Linear tranformations in {H}ilbert space and their
  applications to analysis}, vol.~15, 1932, American Mathematical Society
  Colloquium Publications.

\bibitem{Tit46}
E.~C. Titchmarsh, \emph{Eigenfunction {E}xpansions {A}ssociated with
  {S}econd-{O}rder {D}ifferential {E}quations}, Clarendon Press, Oxford, 1946.

\bibitem{BS05}
E.~P. van~den Ban and H.~Schlichtkrull, \emph{The {P}lancherel decomposition
  for a reductive symmetric space. {II}. {R}epresentation theory}, Invent.
  Math. \textbf{161} (2005), no.~3, 567--628.

\bibitem{VG06}
A.~M. Vershik and M.~I. Graev, \emph{The structure of complementary series and
  special representations of the groups {${\rm O}(n,1)$} and {${\rm U}(n,1)$}},
  Uspekhi Mat. Nauk \textbf{61} (2006), no.~5(371), 3--88.

\bibitem{Wat44}
G.~N. Watson, \emph{A {T}reatise on the {T}heory of {B}essel {F}unctions},
  Cambridge University Press, Cambridge, England, 1944.

\bibitem{Wey10}
H.~Weyl, \emph{\"{U}ber gew\"ohnliche {D}ifferentialgleichungen mit
  {S}ingularit\"aten und die zugeh\"origen {E}ntwicklungen willk\"urlicher
  {F}unktionen}, Math. Ann. \textbf{68} (1910), no.~2, 220--269.

\bibitem{Wey50}
\bysame, \emph{Ramifications, old and new, of the eigenvalue problem}, Bull.
  Amer. Math. Soc. \textbf{56} (1950), 115--139.

\bibitem{Zha11}
G.~Zhang, \emph{Discrete components in restriction of unitary representations
  of rank one semisimple {L}ie groups},  (2011), preprint, available at
  \href{http://arxiv.org/abs/1111.6406}{arXiv:1111.6406}.

\end{thebibliography}

\vspace{30pt}

\noindent
\textsc{Jan M\"ollers\\Department of Mathematics, The Ohio State University, 231 West 18th Avenue, Columbus, OH 43210, USA}\\
\emph{E-mail address:} \texttt{mollers.1@osu.edu}

\bigskip
\noindent
\textsc{Yoshiki Oshima\\Kavli IPMU (WPI), The University of Tokyo, 5-1-5 Kashiwanoha, Kashiwa, 277-8583, Japan}\\
\emph{E-mail address:} \texttt{yoshiki.oshima@ipmu.jp}

\end{document}